\numberwithin{equation}{section}
\theoremstyle{plain}
\newtheorem{thm}{Theorem}
\newtheorem{proposition}{Proposition}
\newtheorem{corollary}{Corollary}
\newtheorem{lemma}{Lemma}
\newtheorem{assumption}{Assumption}
\theoremstyle{definition}
\newtheorem{definition}{Definition}
\theoremstyle{remark}
\newtheorem{remark}{Remark}
\newtheorem{example}{Example}
\begin{document}
\title[Nonparametric regression with nuisance components]{A theory of nonparametric regression in the presence 
of complex nuisance components}
\author{Martin Wahl}
\address{Institut f\"{u}r Angewandte Mathematik, Universit\"{a}t Heidelberg,
              Im Neuenheimer Feld 294, 69120 Heidelberg, Germany}
\email{wahl@uni-heidelberg.de}  
\begin{abstract}
In this paper, we consider the nonparametric random regression model $Y=f_1(X_1)+f_2(X_2)+\epsilon$
and address the problem of estimating the function $f_1$. The term $f_2(X_2)$ is regarded as a nuisance term which can be considerably more complex than $f_1(X_1)$. Under minimal assumptions, we prove several nonasymptotic $L^2(\mathbb{P}^X)$-risk bounds for our estimators of $f_1$. Our approach is geometric and based on considerations in Hilbert spaces. It shows that the performance of our estimators is closely related to geometric quantities, 
 such as minimal angles and Hilbert-Schmidt norms. Our results establish new conditions under which the estimators of $f_1$ have up to first order the same sharp upper bound as the corresponding estimators of $f_1$ in the model $Y=f_1(X_1)+\epsilon$. As an example we apply the results to an additive model in which the number of components is very large or in which the nuisance components are considerably less smooth than $f_1$. 
In particular, the results apply to an asymptotic scenario in which the number of components is allowed to increase with the sample size.
\end{abstract}
\keywords{Nonparametric regression, nuisance components, projection on sumspaces, minimax estimation, additive model, increasing number of components}
\subjclass[2010]{62G08, 62G20, 62H05, 62H20}
\maketitle
\section{Introduction} 
In this paper, we consider the nonparametric random regression model
\begin{equation}\label{eq:regrmod}
Y=f_1(X_1)+f_2(X_2)+\epsilon.
\end{equation}
We study the problem of estimating the
function $f_1$, while the function $f_2$ is regarded as a nuisance parameter. We are interested in settings where the second term $f_2(X_2)$ is much more complex than the first term $f_1(X_1)$. A particular model of interest is the additive model
\begin{equation}\label{eq:tmam}
Y=f_1(X_1)+\sum_{j=1}^{q-1}f_{2j}(X_{2j})+\epsilon
\end{equation}
in which the nuisance components $f_{2j}$ are considerably less smooth than $f_1$ or in which the number of components $q$ is very large, for instance in the sense that $q$ is allowed to increase with the sample size $n$.
The estimation problem is similar to the one arising in semiparametric models where the aim is to estimate a finite-dimensional parameter in the presence of a (more complex) infinite-dimensional parameter.

Estimation in nonparametric additive models is a well-studied topic, especially when considering the problem of estimating all components in the case that $q$ is fixed. One of the seminal theoretical papers is by Stone \cite{S}, who showed that each component can be estimated with the rate of convergence corresponding to the situation in which the other components are known. Since then, many estimation procedures have been proposed, many of them consisting of several steps. In the work by Linton \cite{L} and Fan, H\"{a}rdle, and Mammen \cite{FHM}, it is shown that there exist estimators of single components which have the same asymptotic bias and variance as the corresponding oracle estimators for which the other components are known.

Probably the most popular estimation procedures are the backfitting procedures, which are empirical versions of the orthogonal projection onto the subspace of additive functions in a Hilbert space setting (see, e.g., the book by Hastie and Tibshirani \cite{HT} and the references therein). This orthogonal projection was studied, e.g., by Breiman and Friedman \cite{BF} (see also the book by Bickel, Klaassen, Ritov, and Wellner \cite[Appendix A.4]{BKRW}). They showed that, under certain conditions including compactness of certain conditional expectation operators, it can be computed by an iterative procedure using only bivariate conditional expectation operators. Replacing these conditional expectation operators by empirical versions leads to the backfitting procedures. Opsomer and Ruppert \cite{OR} and Opsomer \cite{O} computed the asymptotic bias and variance of estimators based on the backfitting procedure in the case where the conditional expectation operators are estimated using local polynomial regression. Mammen, Linton, and Nielsen \cite{MLN} introduced the smooth backfitting procedure and showed that their estimators of single components achieve the same asymptotic bias and variance as oracle estimators for which the other components are known. Concerning the distribution of the covariates, they make some high-level assumptions which are satisfied under some boundedness conditions on the one- and two-dimensional densities. This is still more than is required in the Hilbert space setting (see \cite{BF}). In the work by Horowitz, Klemel\"{a}, and Mammen \cite{HKM}, a general two-step procedure was proposed in which a preliminary undersmoothed estimator is based on the smooth backfitting procedure of \cite{MLN}. They also showed that there are estimators which are asymptotically efficient (i.e., achieve the asymptotic minimax risk) with the same constant as in the case with only one component. In addition to the assumptions coming from the results in \cite{MLN}, they require a Lipschitz condition for all components.

The problem of estimating $f_1$ in cases in which $f_2(X_2)$ is more complex than $f_1(X_1)$ is also considered in the work by Efromovich \cite{Efr} and Muro and van de Geer \cite{G}. In \cite{Efr}, an estimator of $f_1$ is constructed which is both adaptive to the unknown smoothness and asymptotically efficient with the same constant as in the case with only one component. The assumptions include smoothness and boundedness conditions on the full-dimensional density of $(X_1,X_2)$.  The construction of the estimator is involved and starts with a blockwise-shrinkage oracle estimator. 
In \cite{G}, a penalized least squares estimator is analyzed in cases where the function $f_1$ is smoother than the function $f_2$. Under certain assumptions including smoothness conditions on the design densities, it is shown that for both components, the estimator attains the rate of convergence corresponding to the situation in which the other component is known; i.e., no undersmoothing of the function $f_2$ is needed to estimate the function $f_1$.

The previously discussed literature on additive models focuses on the asymptotic behavior of estimators as the number of observations $n$ goes to infinity in the case that $q$ is fixed. Note that one of our purposes is to generalize several results to the case that $q$ increases with $n$.

Recently, high-dimensional sparse additive models have been studied, e.g., in the work by Meier, van de Geer, and B\"{u}hlmann \cite{MGB}, Huang, Horowitz, and Wei \cite{HHW}, Koltchinskii and Yuan \cite{KY}, Raskutti, Wainwright, and Yu \cite{RWY}, Suzuki and Sugiyama \cite{SuSu}, and Dalalyan, Ingster, and Tsybakov \cite{DIT}. These papers consider the case that the number of covariates $q$ is much larger than the sample size $n$. The focus is on the problem of estimating all components under sparsity constraints. 
In \cite{DIT}, e.g., the authors construct an estimator achieving optimal minimax rates of convergence. These rates of convergence depend on $q$ and also on the smallest degree of smoothness of the $f_{2j}$. Hence, they may only lead to crude bounds for the rates of convergence of estimators of $f_1$. Let us mention that in this paper, we do not consider a sparsity scenario. We are interested in cases in which the number of components $q$ is very large, but smaller than $n$.

In this paper, we consider model \eqref{eq:regrmod} in the case that the functions $f_1$ and $f_2$ belong to closed subspaces $H_1$ and $H_2$ of $\lbrace g_1\in L^2(\mathbb{P}^{X_1}):\mathbb{E}\left[ g_1(X_1)\right]=0 \rbrace$ and $L^2(\mathbb{P}^{X_2})$, respectively. We propose an estimator of $f_1$ which is based on the composition of two least squares criteria. Our main contribution is to derive several nonasymptotic risk bounds which show that the performance of our estimators is closely related to geometric quantities of $H_1$ and $H_2$, such as minimal angles and Hilbert-Schmidt norms. These risk bounds lead to minimal conditions under which the function $f_1$ can be estimated (up to first order) just as well as in
the model $Y=f_1(X_1)+\epsilon$.
Our analysis is based on geometric considerations in Hilbert spaces, and relies on the theory of projections on sumspaces in Hilbert spaces (see, e.g., \cite[Appendix A.4]{BKRW}). Moreover, we apply recent concentration inequalities for structured random matrices (see, e.g., the work by Rauhut \cite{R}) in order to show that several geometric properties in the Hilbert space setting carry over to the finite sample setting with high probability.
As a main example we apply our results to the additive model \eqref{eq:tmam} which corresponds to the case that $H_2$ has an additive structure. Using our results, we establish new conditions on $q$ and on the smoothness of the nuisance components under which our estimator of $f_1$ attains the same (nonasymptotic) optimal rate of convergence as the corresponding least squares estimator in the model $Y=f_1(X_1)+\epsilon$. We also address the question of when the corresponding constants coincide.

The paper is organized as follows. In Section \ref{framew} and \ref{mres}, we present the assumptions on the model and state our main results in Theorems \ref{thm1}-\ref{thm4}. In Section \ref{applifjh}, we apply our results to several models including the additive model. The proofs of our results are given in Sections \ref{prthm12} and \ref{prthm34}. Finally, some complements are given in the Appendix.

\section{The framework}\label{framew}
\subsection{The model} Let $(Y,X_1,X_2)$ be a triple of random variables satisfying \eqref{eq:regrmod},
where $X_1$ and $X_2$ take values in some measurable spaces $(S_1,\mathcal{B}_1)$ and $(S_2,\mathcal{B}_2)$, respectively, $\epsilon$ is a real valued random variable such that $\mathbb{E}\left[  \epsilon\vert X\right]  =0$ and $\mathbb{E}\left[ \epsilon^2\vert X \right]=\sigma^2$, and the unknown regression functions satisfy the following assumption:
\begin{assumption}\label{compass} Suppose that  $f_1\in H_1$, where \[H_1\subseteq\lbrace g_1\in L^2(\mathbb{P}^{X_1}):\mathbb{E}\left[ g_1(X_1)\right]=0 \rbrace\] is a closed subspace, and that $f_2\in H_2$, where $H_2\subseteq L^2(\mathbb{P}^{X_2})$ is a closed subspace. 
\end{assumption} 
Structural assumptions on $f_1$ and $f_2$ (see, e.g., Section \ref{applifjh} where we also consider the additive model) should be incorporated into the model by making assumptions on $H_1$ and $H_2$. From the above, we have that
$X=(X_1,X_2)$ is a random variable taking values in $(S_1\times S_2,\mathcal{B}_1\otimes\mathcal{B}_2)$ (note that in Section \ref{amsc}, we consider the example $S_1=[0,1]$, $S_2=[0,1]^{q-1}$, and $S_1\times S_2=[0,1]^q$, where all spaces are equipped with the Borel $\sigma$-algebra). Moreover, we have that
the spaces $L^2(\mathbb{P}^{X_1})$ and $L^2(\mathbb{P}^{X_2})$ are (in a canonical way) subspaces of $L^2(\mathbb{P}^{X})$, which implies that $H_1$ and $H_2$ are also closed subspaces of $L^2(\mathbb{P}^X)$. Finally, we denote by $f$ the whole regression function given by $f=f_1+f_2$. We assume that we observe $n$ independent copies 
\[(Y^1,X^1),\dots,(Y^n,X^n)\]
of $(Y,X)$, where $X^i=(X_1^i,X_2^i)$, $1\leq i\leq n$. Based on this sample, we consider the problem of estimating the function $f_1$.

\subsection{The main assumption}
Our approach relies strongly on the fact that the space $L^2(\mathbb{P}^X)$ is a Hilbert space with the inner product $\langle g, h\rangle=\mathbb{E}[g(X)h(X)]$ and the corresponding norm $\|g\|=\sqrt{\langle g, g\rangle}$ (see, e.g., \cite[Theorem 5.2.1]{Dudley}). 
In order to state our main assumption, we give the following general definition of a minimal angle in Hilbert spaces (see \cite[Definition 1]{KW} and the references therein).
\begin{definition} Let $\mathcal{H}_1$ and $\mathcal{H}_2$ be two closed subspaces of a Hilbert space $\mathcal{H}$ with inner product $\langle\cdot,\cdot\rangle$ and norm $\|\cdot\|$. The minimal angle between $\mathcal{H}_1$ and $\mathcal{H}_2$ is the number $0\leq \tau_0\leq \pi/2$ whose cosine is given by 
\begin{equation*}
 \rho_0=\rho_0(\mathcal{H}_1,\mathcal{H}_2)=\sup\left\{\frac{\langle h_1,h_2\rangle}{\Vert h_1\Vert\Vert h_2\Vert}\ \bigg|\ 0\neq h_1\in \mathcal{H}_1,0\neq h_2
 \in \mathcal{H}_2\right\}.
\end{equation*}
\end{definition}
\begin{assumption}\label{angle} Suppose that the cosine of the minimal angle between $H_1$ and $H_2$ is strictly less than 1, i.e.,
\begin{equation*}
 \rho_0(H_1,H_2)<1.
\end{equation*}
\end{assumption}
The next lemma states two equivalent
formulations of Assumption \ref{angle}. Since we will also apply it to the finite sample setting in later sections, we again give a general statement. 
\begin{lemma}\label{angleequiv} Let $\mathcal{H}_1$ and $\mathcal{H}_2$ be two closed subspaces of a Hilbert space $\mathcal{H}$ with inner product $\langle\cdot,\cdot\rangle$ and norm $\|\cdot\|$. Let $0\leq\varrho< 1$ be a constant. Then the following assertions are equivalent:
\begin{enumerate}
  \item[(i)] For all $0\neq h_1\in \mathcal{H}_1,0\neq h_2\in\mathcal{H}_2$ we have
  \[\frac{|\langle h_1, h_2\rangle|}{\Vert h_1\Vert\Vert h_2\Vert}\leq \varrho.\]
	\item[(ii)] For all $h_1\in \mathcal{H}_1,h_2\in\mathcal{H}_2$ we have
  \[\Vert h_1+h_2\Vert^2\geq (1-\varrho)(\Vert h_1\Vert^2+\Vert h_2\Vert^2).\]
	\item[(iii)] For all $h_1\in \mathcal{H}_1,h_2\in\mathcal{H}_2$ we have
  \[\Vert h_1+h_2\Vert^2\geq (1-\varrho^2)\Vert h_1\Vert^2.\]
\end{enumerate}
\end{lemma}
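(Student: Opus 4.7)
My plan is to chain the three assertions through the polarization identity $\|h_1+h_2\|^2 = \|h_1\|^2 + \|h_2\|^2 + 2\langle h_1,h_2\rangle$ together with the projection theorem on the closed subspace $\mathcal{H}_2$.

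For (i) $\Rightarrow$ (ii), I would bound $2|\langle h_1,h_2\rangle| \leq 2\varrho \|h_1\|\|h_2\|$ by hypothesis and then apply AM--GM to get $2\varrho \|h_1\|\|h_2\| \leq \varrho(\|h_1\|^2+\|h_2\|^2)$; plugging into the polarization identity gives the claim. For the converse (ii) $\Rightarrow$ (i), since $\mathcal{H}_2$ is a subspace one may apply (ii) to both $h_2$ and $-h_2$, yielding $|\langle h_1,h_2\rangle| \leq \tfrac{\varrho}{2}(\|h_1\|^2+\|h_2\|^2)$. Replacing the pair $(h_1,h_2)$ by $(t h_1, h_2/t)$ (still in $\mathcal{H}_1\times\mathcal{H}_2$) and minimizing the right-hand side over $t>0$ with the choice $t^2=\|h_2\|/\|h_1\|$ then produces $|\langle h_1,h_2\rangle|\leq \varrho\|h_1\|\|h_2\|$.

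For (i) $\Rightarrow$ (iii), I would again expand, bound the cross term by (i), and complete the square:
\begin{equation*}
\|h_1+h_2\|^2 \geq \|h_1\|^2 - 2\varrho \|h_1\|\|h_2\| + \|h_2\|^2 = (\|h_2\|-\varrho\|h_1\|)^2 + (1-\varrho^2)\|h_1\|^2,
\end{equation*}
which is $\geq (1-\varrho^2)\|h_1\|^2$.

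For (iii) $\Rightarrow$ (i), I would fix $h_1\in \mathcal{H}_1$ and minimize $\|h_1+h_2\|^2$ over $h_2\in\mathcal{H}_2$. Since $\mathcal{H}_2$ is closed, the minimum is attained at $h_2=-P_{\mathcal{H}_2}h_1$, with value $\|h_1\|^2 - \|P_{\mathcal{H}_2}h_1\|^2$. Thus (iii) forces $\|P_{\mathcal{H}_2}h_1\|\leq \varrho\|h_1\|$ for every $h_1\in\mathcal{H}_1$. Finally, for $h_2\in\mathcal{H}_2$ arbitrary, $\langle h_1,h_2\rangle = \langle P_{\mathcal{H}_2}h_1, h_2\rangle$, so Cauchy--Schwarz and the bound on $\|P_{\mathcal{H}_2}h_1\|$ give (i). There is no real obstacle here; the only point to be careful about is invoking the closedness of $\mathcal{H}_2$ in order to realise the projection, which is part of the hypothesis.
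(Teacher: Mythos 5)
Your proof is correct. The implications (i)$\Rightarrow$(ii) and (i)$\Rightarrow$(iii) are exactly the paper's argument (expand the square, bound the cross term, then use $2\varrho\|h_1\|\|h_2\|\leq\varrho(\|h_1\|^2+\|h_2\|^2)$ resp.\ $\leq\varrho^2\|h_1\|^2+\|h_2\|^2$), and your (ii)$\Rightarrow$(i) via $\pm h_2$ and optimizing the scaling $t$ is just a homogenized version of the paper's normalization $\|h_1\|=\|h_2\|=1$ with $\langle h_1,h_2\rangle\geq 0$. The only genuinely different step is (iii)$\Rightarrow$(i): you minimize $\|h_1+h_2\|^2$ over all of $\mathcal{H}_2$, which requires the projection theorem and hence the closedness of $\mathcal{H}_2$, and then use $\langle h_1,h_2\rangle=\langle P_{\mathcal{H}_2}h_1,h_2\rangle$ plus Cauchy--Schwarz. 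The paper instead tests (iii) only against the one-dimensional choice $h_2'=-\langle h_1,h_2\rangle h_2$ for a unit vector $h_2$, which gives $1-\langle h_1,h_2\rangle^2\geq 1-\varrho^2$ directly; this is more elementary and does not use closedness at all (closedness is in the hypotheses, so your appeal to it is legitimate, but it is not needed for this lemma). Your route buys the intermediate statement $\|P_{\mathcal{H}_2}h_1\|\leq\varrho\|h_1\|$, which is the operator-norm reformulation of $\rho_0$ used elsewhere in the paper (e.g.\ in \eqref{eq:pa1}), while the paper's route keeps the lemma purely algebraic.
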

A proof of Lemma \ref{angleequiv} is given in Appendix \ref{app2a}.
\subsection{The estimation procedure}\label{es} 
Let $V_1\subseteq H_1$ and $V_2\subseteq H_2$ be $d_1$- and $d_2$-dimensional linear subspaces, respectively, and let $W_1\subseteq V_1$ be a linear subspace. Let $V=V_1+V_2$ and $d=d_1+d_2$. By Assumption \ref{angle}, we have $V_1\cap V_2=\lbrace0\rbrace$, which implies that $d$ is equal to the dimension of $V$ and that each $g\in V$ can be decomposed uniquely as $g=g_1+g_2$ with $g_1\in V_1$ and $g_2\in V_2$. We will make only one assumption on $V$ which relates the $\infty$-norm with the $2$-norm, and which will be needed to apply concentration of measure inequalities (compare to, e.g., \cite[Section 3.1.1]{BM} and \cite[Section 1.1]{B}). 
\begin{assumption}\label{assinfty}
Suppose that there is a real number $\varphi\geq 1$ such that 
\begin{equation}\label{eq:statmodass1}
\Vert g\Vert_\infty\leq\varphi\sqrt{d}\Vert g\Vert
\end{equation} 
for all $g\in V$.
\end{assumption}  
\begin{remark}\label{assinftyrem}
In view of Assumption \ref{angle}, Equation \eqref{eq:statmodass1} is satisfied if there are real numbers $\varphi_j\geq 1$ such that $\Vert g_j\Vert_\infty\leq\varphi_j\sqrt{d_j}\Vert g_j\Vert$ for all $g_j\in V_j$, $j=1,2$. Indeed, applying the Cauchy-Schwarz inequality and Lemma \ref{angleequiv}, we have
\[
\|g_1+g_2\|_\infty\leq \varphi_1\sqrt{d_1}\|g_1\|+\varphi_2\sqrt{d_2}\|g_2\|\leq \frac{\varphi_1\vee\varphi_2}{\sqrt{1-\rho_0}}\sqrt{d_1+d_2}\|g_1+g_2\|.
\]
\end{remark}
The construction of our estimator is based on two least squares criteria. First, let $\hat{f}_V$ be the least squares estimator on the model $V$ which is given (not uniquely) by 
\begin{equation}\label{eq:lscadd}
 \hat{f}_V=\arg\min_{g\in V} \frac{1}{n}\sum_{i=1}^n(Y^i-g(X^i))^2.
\end{equation}  
By the definition of $V$, we have $\hat{f}_V=(\hat{f}_V)_1+(\hat{f}_V)_2$ with $(\hat{f}_V)_1\in V_1$ and $(\hat{f}_V)_2\in V_2$. 
 Next, by applying a second least squares criterion, we define the estimator $\hat{f}_1$ by
\begin{equation}\label{eq:est}
 \hat{f}_1=\arg\min_{g_1\in W_1} \frac{1}{n}\sum_{i=1}^n((\hat{f}_V)_1(X_1^i)-g_1(X_1^i))^2.
\end{equation} 
We will also consider the special case $W_1=V_1$, in which we have $\hat{f}_1=(\hat{f}_V)_1$. This means that the second least squares criterion can be dropped. However, we will see that choosing $V_1$ as a preliminary space of larger dimension leads to a smaller bias (it lowers the dependence on $\rho_0$). Finally, since we want to establish risk bounds, it is convenient to eliminate very large values. Therefore, we define our final estimator $\hat{f}_{1}^{*}$ by
\begin{equation}\label{eq:eststar}
\hat{f}_{1}^{*}=\hat{f}_1\text{ if }\Vert\hat{f}_1\Vert_\infty\leq k_n\text{ and }\hat{f}_{1}^*\equiv 0 \text{ otherwise},
\end{equation}
where $k_n$ is a real number to be chosen later (compare to the work by Baraud \cite[Eq. (3)]{B}). Finally, note that the estimator is not feasible since the distribution of $X$ is not known and therefore the condition $\mathbb{E}\left[ g_1(X_1)\right]=0$ cannot be checked. However, one can replace it by the condition $(1/n)\sum_{i=1}^ng_1(X_1^i)=0$. In Appendix \ref{app2f}, we show how our results carry over to these modified estimators.

In our analysis of $\hat{f}_1^*$, one important step is to carry over the geometric properties valid in the Hilbert space setting to the finite sample setting. For this, the following event
\begin{equation*}
\mathcal{E}_{\delta}=\left\lbrace (1-\delta)\Vert g\Vert^2\leq\Vert g\Vert_n^2\leq (1+\delta)\Vert g\Vert^2\ \text{ for all }g\in V\right\rbrace,
\end{equation*}
$0<\delta<1$, will play the key role. Here, $\|\cdot\|_n$ denotes the empirical norm (see, e.g., Section \ref{prel}). A first observation is that, under Assumptions \ref{compass} and \ref{angle}, the estimator $\hat{f}_1^*$ is unique on the event $\mathcal{E}_\delta$. This can be seen as follows. If $\mathcal{E}_\delta$ holds, then $\Vert\cdot\Vert$ and $\Vert\cdot\Vert_n$ are equivalent norms on $V$, which in turn implies that each $g\in V$ is uniquely determined by $(g(X^1),\dots,g(X^n))^T$. Hence, the solutions of the least squares criteria in \eqref{eq:lscadd} and \eqref{eq:est} are unique (since the solutions are unique when restricted to vectors in $\mathbb{R}^n$ evaluated at the observations). Moreover, by Assumption \ref{angle}, the decomposition $\hat{f}_V=(\hat{f}_V)_1+(\hat{f}_V)_2$ is unique. 

In addition, we also obtain a simple representation of our estimator. Let $\hat{\Pi}_{V}$ be the orthogonal projection from $\mathbb{R}^n$ to the subspace $\lbrace (g(X^1),\dots,g(X^n))^T|g\in V\rbrace$, and let $\hat{\Pi}_{W_1}$ be defined analogously. 
If $\mathcal{E}_\delta$ holds, then we have
\[(\hat{f}_{1}(X_1^1),\dots,\hat{f}_{1}(X_1^n))^T=\hat{\Pi}_{W_1}(\hat{\Pi}_V\mathbf{Y})_1,\]
where $\hat{\Pi}_V\mathbf{Y}=(\hat{\Pi}_V\mathbf{Y})_1+(\hat{\Pi}_V\mathbf{Y})_2$ is the unique decomposition of the least squares estimator on the model $V$, considered as a vector in $\mathbb{R}^n$, with $(\hat{\Pi}_V\mathbf{Y})_j\in\lbrace (g_j(X_j^1),\dots,g_j(X_j^n))^T|g_j\in V_j\rbrace$.

\section{Main results}\label{mres}
\subsection{A first risk bound} In this section, we present a first non\-asymptotic risk bound in the case $W_1=V_1$,
which will be further improved (under additional assumptions) in later sections. We denote by ${\Pi}_V$ (resp. $\Pi_{V_1}$, $\Pi_{V_2}$, and $\Pi_{W_1}$) the orthogonal projection from $L^2(\mathbb{P}^X)$ to the subspace $V$ (resp. $V_1$, $V_2$, and $W_1$).
\begin{thm}\label{thm1} Let Assumption \ref{compass}, \ref{angle}, and \ref{assinfty} be satisfied. Let $0<\delta<1$ be a real number. Let $W_1=V_1$. Then
\begin{align*}
 &\mathbb{E}\left[\Vert f_1-\hat{f}_{1}^*\Vert^2\right]\\& \leq \frac{1+\delta}{(1-\delta)^3}\frac{1}{1-\rho_0^2}
 \left(\left(1+\frac{\varphi^2d}{n}\right)\Vert f-\Pi_{V}f\Vert^2+\frac{\sigma^2 \dim V_1}{n}\right) +R_n
\end{align*} 
with  
\begin{align*}
&R_n=\\&\frac{2(1+\delta)\varphi^2d\Vert f_1\Vert^2(\|f-\Pi_{V_2}f\|^2+\sigma^2)}{(1-\delta)^2(1-\rho_0^2)k_n^2}+2(\Vert f_1\Vert+k_n)^2d
 \exp\left(-\kappa\frac{\delta^2n}{\varphi^2d}\right),
\end{align*}  
where $\kappa$ is the universal constant in Theorem \ref{mfg}.
\end{thm}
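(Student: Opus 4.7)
The overall strategy is to split $\mathbb{E}\Vert f_1-\hat f_1^*\Vert^2$ into three pieces according to the events $\mathcal{E}_\delta\cap\{\Vert\hat f_1\Vert_\infty\leq k_n\}$, $\mathcal{E}_\delta\cap\{\Vert\hat f_1\Vert_\infty>k_n\}$, and $\mathcal{E}_\delta^c$. These will contribute, respectively, the main term of the stated bound, the first summand of $R_n$ (from the truncation triggering), and the exponential-tail summand of $R_n$ (from the norm-equivalence failing).

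On $\mathcal{E}_\delta\cap\{\Vert\hat f_1\Vert_\infty\leq k_n\}$ we have $\hat f_1^*=\hat f_1=(\hat f_V)_1$ (using $W_1=V_1$). The algebraic heart of the proof is the identity
\[
f-\hat f_V=(f_1-\hat f_1)+\bigl(f_2-(\hat f_V)_2\bigr),
\]
in which the two summands lie in $H_1$ and $H_2$ respectively, so Lemma \ref{angleequiv}(iii) yields
\[
\Vert f_1-\hat f_1\Vert^2\leq\frac{1}{1-\rho_0^2}\Vert f-\hat f_V\Vert^2.
\]
I would then decompose $\Vert f-\hat f_V\Vert^2=\Vert f-\Pi_V f\Vert^2+\Vert\Pi_V f-\hat f_V\Vert^2$ by $L^2$-Pythagoras, push the second term to the empirical norm via $\mathcal{E}_\delta$ (picking up a factor of $(1-\delta)^{-1}$), and use $\hat f_V=\hat\Pi_V\mathbf{Y}$ together with $\mathbb{E}[\epsilon\mid X]=0$ to split the resulting empirical norm, after conditioning on $X$, into a bias piece dominated by $\Vert f-\Pi_V f\Vert_n^2$ and a noise piece. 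A direct bound on the noise piece would give $\sigma^2\dim V/n$; to replace $\dim V$ by $\dim V_1$ I would apply a Frisch--Waugh--Lovell residualization of $V_1$ against $V_2$ and use Lemma \ref{angleequiv}(iii) in the empirical inner product (valid on $\mathcal{E}_\delta$) to control the inverse of the residualized Gram matrix. Converting the empirical bias $\mathbb{E}\Vert f-\Pi_V f\Vert_n^2=\Vert f-\Pi_V f\Vert^2$ into a pointwise bound usable on $\mathcal{E}_\delta$ produces the factor $(1+\varphi^2 d/n)$ via Assumption \ref{assinfty}.

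On $\mathcal{E}_\delta\cap\{\Vert\hat f_1\Vert_\infty>k_n\}$ we have $\hat f_1^*\equiv 0$ and therefore $\Vert f_1-\hat f_1^*\Vert^2=\Vert f_1\Vert^2$. Markov's inequality together with Assumption \ref{assinfty} gives $\mathbb{P}(\Vert\hat f_1\Vert_\infty>k_n,\mathcal{E}_\delta)\leq\varphi^2 d\,\mathbb{E}[\Vert\hat f_1\Vert^2\mathbf{1}_{\mathcal{E}_\delta}]/k_n^2$, and the remaining oracle-type bound $\mathbb{E}[\Vert\hat f_1\Vert^2\mathbf{1}_{\mathcal{E}_\delta}]\leq C(\Vert f-\Pi_{V_2}f\Vert^2+\sigma^2)/(1-\rho_0^2)$ follows by comparing the least-squares criterion defining $\hat f_V$ with the trial function $\Pi_{V_2}f\in V_2$ (whose $V_1$-part vanishes) and extracting the $V_1$-component via Lemma \ref{angleequiv}(iii); this produces the first summand of $R_n$.

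On $\mathcal{E}_\delta^c$ the deterministic bound $\Vert f_1-\hat f_1^*\Vert^2\leq(\Vert f_1\Vert+k_n)^2$, valid because the truncation forces $\Vert\hat f_1^*\Vert_\infty\leq k_n$, is combined with the concentration estimate $\mathbb{P}(\mathcal{E}_\delta^c)\leq 2d\exp(-\kappa\delta^2 n/(\varphi^2 d))$ obtained by applying Theorem \ref{mfg} to the empirical Gram matrix of an $L^2$-orthonormal basis of $V$ (using Assumption \ref{assinfty}), yielding the exponential-tail summand of $R_n$. The main difficulty lies in the first case: obtaining the sharp $\sigma^2\dim V_1/n$ rather than the naive $\sigma^2\dim V/n$ requires the Frisch--Waugh--Lovell refinement just described, and matching the prefactor $(1+\delta)/(1-\delta)^3$ forces a careful combination of two norm-equivalence conversions with the empirical projection identity.
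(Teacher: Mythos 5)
Your event split ($\mathcal{E}_\delta\cap\{\Vert\hat f_1\Vert_\infty\le k_n\}$, $\mathcal{E}_\delta\cap\{\Vert\hat f_1\Vert_\infty>k_n\}$, $\mathcal{E}_\delta^c$) and your treatment of the last two events are essentially the paper's argument (Appendix G together with Theorem \ref{mfg}), and those parts are fine. The gap is in the main event, and it is structural: you apply Lemma \ref{angleequiv}(iii) to the \emph{noisy} error, $\Vert f_1-\hat f_1\Vert^2\le\frac{1}{1-\rho_0^2}\Vert f-\hat f_V\Vert^2$ with $\hat f_V=\hat\Pi_V\mathbf{Y}$, and only afterwards try to recover $\sigma^2\dim V_1/n$ by a Frisch--Waugh--Lovell residualization. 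That cannot work: once you have passed to $\Vert f-\hat f_V\Vert^2$, the noise enters through $\hat\Pi_V\boldsymbol{\epsilon}$, and on $\mathcal{E}_\delta$ one has exactly $\mathbb{E}\bigl[\Vert\hat\Pi_V\boldsymbol{\epsilon}\Vert_n^2\mid X^1,\dots,X^n\bigr]=\sigma^2 d/n$; no manipulation downstream of $\Vert f-\hat f_V\Vert^2$ can see the component structure again, so your route proves the theorem only with $\dim V$ in place of $\dim V_1$ — a serious loss, since the whole point is that $d_2$ may be huge. The FWL idea itself is sound, but it must be applied \emph{before} the angle inequality, at the level of the first component: write $\hat f_1=(\hat\Pi_V f)_1+(\hat\Pi_V\boldsymbol{\epsilon})_1$, bound the noise term by
\begin{equation*}
\mathbb{E}\bigl[\Vert(\hat\Pi_V\boldsymbol{\epsilon})_1\Vert_n^2\mid X^1,\dots,X^n\bigr]
=\frac{\sigma^2}{n}\operatorname{tr}\bigl((Z_1^T M_2 Z_1)^{-1}Z_1^TZ_1\bigr)
\le\frac{1}{1-\rho_{0,\delta}^2}\frac{\sigma^2 d_1}{n},
\end{equation*}
where $M_2=I-\hat\Pi_{V_2}$ and the last inequality uses the empirical angle bound of Proposition \ref{empangle} (this is a legitimate, arguably more direct, substitute for the paper's von Neumann/backfitting argument in Lemma \ref{Arvn}, Proposition \ref{backf1} and Corollary \ref{corbackf}), and apply Lemma \ref{angleequiv}(iii) only to the noiseless part $f_1-(\hat\Pi_V f)_1\le\frac{1}{\sqrt{1-\rho_0^2}}\Vert f-\hat\Pi_V f\Vert$, as the paper does.

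A secondary problem is the constant in front of $\Vert f-\Pi_V f\Vert^2$. In your chain the approximation error is counted twice: once in the $L^2$-Pythagoras step $\Vert f-\hat f_V\Vert^2=\Vert f-\Pi_V f\Vert^2+\Vert\Pi_V f-\hat f_V\Vert^2$ and again when the empirical bias piece is bounded by $\Vert f-\Pi_V f\Vert_n^2$, so as $\delta\to0$ you get a prefactor of about $2/(1-\rho_0^2)$, not the stated $\frac{1+\delta}{(1-\delta)^3}\frac{1}{1-\rho_0^2}$; this matters because the sharp-constant corollaries rest on the prefactor being $1+o(1)$ times $(1-\rho_0^2)^{-1}$. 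The paper avoids the double count by staying in the population norm and invoking Lemma \ref{infty}, which gives $\mathbb{E}\bigl[1_{\mathcal{E}_\delta}\Vert(\Pi_V-\hat\Pi_V)f\Vert^2\bigr]\le\frac{1}{(1-\delta)^2}\frac{\varphi^2 d}{n}\Vert f-\Pi_V f\Vert^2$ via a variance computation for the empirical Gram matrix and inner products (using $\Vert\sum_j b_j^2\Vert_\infty\le\varphi^2 d$); this, and not a pointwise conversion of the empirical bias, is where the factor $1+\varphi^2 d/n$ in the statement comes from. With the decomposition reordered as above and Lemma \ref{infty} (or an equivalent computation) in place, your outline becomes a correct proof.
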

Before we discuss the two main terms, let us give conditions under which the remainder term $R_n$ is small. Suppose that for some real number $c>0$, we have
\begin{equation*}
 \varphi^2d\leq \frac{c\delta^2n}{\log n},
\end{equation*}
and let
$k_n^2=\|f_1\|^2n^{\kappa/(2c)}$ (this is a theoretical choice of $k_n$ leading to a simple upper bound for $R_n$, many other choices are possible, too). Then one can show that
\begin{equation*}
R_n\leq\frac{12c(1+\delta)\delta^2}{(1-\delta)^2(1-\rho_0^2)}\left(\|f_1\|^2+\|f_2-\Pi_{V_2}f_2\|^2+\sigma^2\right)n^{-\frac{\kappa}{2c}+1}.
\end{equation*} 
Letting, e.g., $\delta=1/\log n$ and $c=1/\log n$, we obtain the following corollary of Theorem \ref{thm1}.
\begin{corollary}\label{corthm1} Let Assumption \ref{compass}, \ref{angle}, and \ref{assinfty} be satisfied. Suppose that 
\begin{equation}\label{eq:statmodass}
 \varphi^2d\leq \frac{n}{(\log n)^4}.
\end{equation}
Then there is a universal constant $C>0$ such that
\begin{align*}
 &\mathbb{E}\left[\Vert f_1-\hat{f}_{1}^*\Vert^2\right]\\ &\leq \frac{1}{1-\rho_0^2}
 \left(\Vert f_1-\Pi_{V_1}f_1\Vert^2+\frac{\sigma^2 \dim V_1}{n}\right)\left(1+C/\log n\right)\\
&+\frac{C}{1-\rho_0^2}\left( (\log n)\Vert f_2-\Pi_{V_2}f_2\Vert^2+\|f_1\|^2 n^{-\frac{\kappa}{2}\log n+1}\right).
\end{align*}
\end{corollary}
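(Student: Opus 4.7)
The plan is to specialize Theorem~\ref{thm1} by taking $\delta = 1/\log n$ together with the bound on $R_n$ derived in the paragraph just before the corollary (using $c = 1/\log n$ and $k_n^2 = \|f_1\|^2 n^{\kappa/(2c)}$), and then to repackage the resulting estimate so that all inefficiencies collapse into a single multiplicative factor of the form $1 + C/\log n$, up to a super-polynomially small remainder.

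First I would check the side condition $\varphi^2 d \leq c\delta^2 n/\log n$ required in that paragraph: with $c = \delta = 1/\log n$ this reads $\varphi^2 d \leq n/(\log n)^4$, which is exactly~\eqref{eq:statmodass}. The prefactor $\frac{1+\delta}{(1-\delta)^3}$ multiplying the oracle expression, as well as the inflation $(1 + \varphi^2 d/n) \leq 1 + (\log n)^{-4}$ multiplying $\|f - \Pi_V f\|^2$, both expand to $1 + O(1/\log n)$, so the oracle piece picks up exactly the advertised $(1+C/\log n)$ correction.

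The geometric step of the proof is to pass from $\|f - \Pi_V f\|^2$ to the individual approximation errors $\|f_1 - \Pi_{V_1} f_1\|^2$ and $\|f_2 - \Pi_{V_2} f_2\|^2$. Since $\Pi_{V_1} f_1 + \Pi_{V_2} f_2 \in V_1 + V_2 = V$, the minimizing property of $\Pi_V$ yields
\begin{equation*}
\|f - \Pi_V f\|^2 \leq \bigl\|(f_1 - \Pi_{V_1} f_1) + (f_2 - \Pi_{V_2} f_2)\bigr\|^2.
\end{equation*}
To this I would apply the elementary Young-type inequality $\|a+b\|^2 \leq (1+\epsilon)\|a\|^2 + (1 + 1/\epsilon)\|b\|^2$ with $\epsilon = 1/\log n$, producing a term $(1 + 1/\log n)\|f_1 - \Pi_{V_1} f_1\|^2$ that merges into the oracle piece and a term $C(\log n)\|f_2 - \Pi_{V_2} f_2\|^2$ that appears separately in the corollary. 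This step is the only place where a real choice is made: a crude $\|a+b\|^2 \leq 2\|a\|^2 + 2\|b\|^2$ would inflate the constant in front of $\|f_1 - \Pi_{V_1}f_1\|^2$ and destroy the claimed sharpness.

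Finally, for the remainder, I would use the bound
\begin{equation*}
R_n \leq \frac{12c(1+\delta)\delta^2}{(1-\delta)^2(1-\rho_0^2)}\bigl(\|f_1\|^2 + \|f_2 - \Pi_{V_2}f_2\|^2 + \sigma^2\bigr)n^{-\kappa/(2c) + 1}
\end{equation*}
stated in the paragraph preceding the corollary; its derivation amounts to $\|f - \Pi_{V_2} f\|^2 \leq 2\|f_1\|^2 + 2\|f_2 - \Pi_{V_2} f_2\|^2$ (projections are nonexpansive) together with $(\|f_1\| + k_n)^2 \leq 4\|f_1\|^2 n^{\kappa/(2c)}$. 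With our choice of $\delta$ and $c$ the prefactor is $O(1/((\log n)^3(1-\rho_0^2)))$ and the power $n^{1-(\kappa/2)\log n}$ decays faster than any polynomial. Consequently, for $n$ larger than a universal threshold, the $\|f_2 - \Pi_{V_2} f_2\|^2$-part and the $\sigma^2$-part of $R_n$ are dominated respectively by $C(\log n)\|f_2 - \Pi_{V_2}f_2\|^2$ and by $\sigma^2 \dim V_1/n$ already present in the main term, and are therefore absorbed; the $\|f_1\|^2$-part remains as the explicit $\|f_1\|^2 n^{-(\kappa/2)\log n + 1}$ term, and for the finitely many small values of $n$ the inequality is made vacuous by enlarging $C$. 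The expected main obstacle is purely bookkeeping in this last step, making sure that the super-polynomial decay actually wins against the $1/(\log n)^3$ inflation.
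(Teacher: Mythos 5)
Your proposal is correct and follows essentially the same route the paper intends: apply Theorem~\ref{thm1} with $\delta=c=1/\log n$ (so that $\varphi^2d\leq c\delta^2 n/\log n$ becomes \eqref{eq:statmodass}), use the stated bound on $R_n$, and split $\Vert f-\Pi_V f\Vert^2\leq(1+\epsilon)\Vert f_1-\Pi_{V_1}f_1\Vert^2+(1+1/\epsilon)\Vert f_2-\Pi_{V_2}f_2\Vert^2$ with $\epsilon=1/\log n$, which is exactly how the $(1+C/\log n)$ factor and the $(\log n)\Vert f_2-\Pi_{V_2}f_2\Vert^2$ term arise. Your closing remark about small $n$ is justified (though ``vacuous'' is loose wording) because the truncation $\Vert\hat f_1^*\Vert_\infty\leq k_n$ with $k_n^2=\|f_1\|^2n^{(\kappa/2)\log n}$ gives an a priori bound on the left-hand side that is absorbed by the $\|f_1\|^2 n^{-(\kappa/2)\log n+1}$ term after enlarging the universal constant $C$.
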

The first two terms on the right hand side are (up to the factor $(1-\rho_0^2)^{-1}$) equal to the bias term and the variance term of the same estimator with $V_2=0$ in the model $Y=f_1(X_1)+\epsilon$. The third term is the approximation error of the function $f_2$ with respect to the space $V_2$. It decreases if $V_2$ is chosen larger. Moreover, the choice of $V_2$ does not effect any of the other terms, the only restriction is given by \eqref{eq:statmodass}. The question arising now is as follows: Is it possible to choose a space $V_2$ subject to the constraint \eqref{eq:statmodass} such that $(1-\rho_0^2)^{-1}(\log n)\Vert f_2-\Pi_{V_2}f_2\Vert^2$ is negligible with respect to the first two terms.
\subsection{A refined risk bound} 
In this section, we improve Theorem \ref{thm1} such that the factor $(1-\rho_0^2)^{-1}$ only appears in remainder terms. Since the refined upper bound for the variance term will also contain a Hilbert-Schmidt norm, we give the following general definition (see, e.g., \cite{Weid}). 
\begin{definition} Let $\mathcal{H}_1$ and $\mathcal{H}_2$ be Hilbert spaces. A bounded linear operator $T:\mathcal{H}_1\rightarrow\mathcal{H}_2$ is called Hilbert-Schmidt if for some orthonormal basis $\{\phi_{1\alpha}\}_{\alpha\in I}$ of $\mathcal{H}_1$,
\begin{equation}\label{eq:hsdef}
\sum_{\alpha\in I}\left\|T\phi_{1\alpha}\right\|^2<\infty.
\end{equation}
This sum is independent of the choice of the orthonormal basis (see  \cite[Satz 3.18]{Weid}).
The square root of this sum is called the Hilbert-Schmidt norm of $T$, denoted by $\|T\|_{HS}$.
\end{definition}
Let $\Pi_{V_2}$ be the orthogonal projection from $L^2(\mathbb{P}^X)$ to $V_2$, and let $\Pi_{V_2}|_{W_1}$ be the restriction of $\Pi_{V_2}$ to $W_1$. Then $\Pi_{V_2}|_{W_1}$ is a Hilbert-Schmidt operator, since $W_1$ is finite-dimensional. We prove:

\begin{thm}\label{thm2} Let Assumption \ref{compass}, \ref{angle}, and \ref{assinfty} be satisfied. Let $0<\delta<1$ be a real number. Then
\begin{align}
 &\mathbb{E}\left[\Vert f_1-\hat{f}_{1}^*\Vert^2\right]\nonumber\\
&\leq\left(\Vert f_1-\Pi_{W_1}f_1\Vert^2+\frac{1}{1-\delta}\frac{\sigma^2\dim W_1}{n}\right) \left(1+\frac{1+\delta}{(1-\delta)^3}\frac{1}{1-\rho_0^2}\frac{2\varphi^2d}{n}\right)\nonumber\\
&  +\frac{1+\delta}{(1-\delta)^2}\frac{6}{1-\rho_0^2}\left(\Vert f_1-\Pi_{V_1}f_1\Vert^2+\Vert f_2-\Pi_{V_2}f_2\Vert^2\right)\nonumber\\
&+\frac{1+\delta}{(1-\delta)^4}\frac{1}{1-\rho_0^2}\frac{\sigma^2\left\|\Pi_{V_2}|_{W_1}\right\|^2_{HS}}{n}+R_n,\label{eq:thm2eq}
\end{align}
where $R_n$ is given in Theorem \ref{thm1}.
\end{thm}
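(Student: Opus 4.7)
The plan is to split $\mathbb{E}\bigl[\Vert f_1-\hat{f}_1^*\Vert^2\bigr]$ into contributions on the favourable event $\mathcal{E}_\delta$ and on its complement. The part on $\mathcal{E}_\delta^c$ is controlled exactly as in the proof of Theorem \ref{thm1}: the truncation at level $k_n$ together with the matrix concentration inequality of Theorem \ref{mfg} applied to bound $\mathbb{P}(\mathcal{E}_\delta^c)$ produces the remainder $R_n$. On $\mathcal{E}_\delta$, since $\hat{f}_1\in W_1$, Pythagoras in $L^2(\mathbb{P}^X)$ isolates the bias,
\[\Vert f_1-\hat{f}_1\Vert^2 = \Vert f_1-\Pi_{W_1}f_1\Vert^2 + \Vert\Pi_{W_1}f_1-\hat{f}_1\Vert^2,\]
and the norm equivalence on $V$ bounds the second summand by $(1-\delta)^{-1}\Vert\Pi_{W_1}f_1-\hat{f}_1\Vert_n^2$, so it suffices to work in the empirical norm thereafter.

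The decisive identity on $\mathcal{E}_\delta$ is $\hat{\Pi}_{W_1}\hat{\Pi}_V=\hat{\Pi}_{W_1}$, which holds because the empirical image of $W_1$ in $\mathbb{R}^n$ is contained in that of $V$. Combined with the representation $(\hat{f}_1(X_1^i))_{i=1}^n=\hat{\Pi}_{W_1}(\hat{\Pi}_V\mathbf{Y})_1$ and the decomposition $\hat{\Pi}_V\mathbf{Y}=(\hat{\Pi}_V\mathbf{Y})_1+(\hat{\Pi}_V\mathbf{Y})_2$, this gives
\[(\hat{f}_1(X_1^i))_{i=1}^n = \hat{\Pi}_{W_1}\mathbf{Y}-\hat{\Pi}_{W_1}(\hat{\Pi}_V\mathbf{Y})_2,\]
exhibiting $\hat{f}_1$ as the direct $W_1$-least-squares estimator corrected by an estimate of the $V_2$-part. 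Writing $\mathbf{Y}=(f(X^i))_{i=1}^n+\boldsymbol{\epsilon}$ and using that $\hat{\Pi}_{W_1}$ fixes the vector of values of any $W_1$-function, one arrives at a three-term decomposition of $(\Pi_{W_1}f_1(X_1^i))_{i=1}^n-(\hat{f}_1(X_1^i))_{i=1}^n$ consisting of an $f_1$-bias summand applied to $(f_1-\Pi_{W_1}f_1)$ at the sample, a noise summand $\hat{\Pi}_{W_1}\boldsymbol{\epsilon}$, and a correction $\hat{\Pi}_{W_1}[(f_2(X_2^i))_{i=1}^n-(\hat{\Pi}_V\mathbf{Y})_2]$. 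Since $\hat{\Pi}_{W_1}$ is a contraction in the empirical norm, the first two summands, after expectation, yield the main bias $\Vert f_1-\Pi_{W_1}f_1\Vert^2$ and the main variance $\sigma^2\dim W_1/n$, both picking up an extra $(1-\delta)^{-1}$. The multiplicative factor $(1+\cdots)$ in the theorem arises from applying $(a+b)^2\leq(1+\eta)a^2+(1+\eta^{-1})b^2$ with $\eta$ chosen so the correction sits in $b^2$.

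For the correction I decompose $(f_2(X_2^i))_{i=1}^n-(\hat{\Pi}_V\mathbf{Y})_2$ as (i) the approximation error $((f_2-\Pi_{V_2}f_2)(X_2^i))_{i=1}^n$, whose expected empirical squared norm is $\Vert f_2-\Pi_{V_2}f_2\Vert^2$, plus (ii) the estimation error $(\Pi_{V_2}f_2(X_2^i))_{i=1}^n-(\hat{\Pi}_V\mathbf{Y})_2$, which is the $V_2$-component of the LSE residual of $\hat{f}_V$ around $\Pi_{V_1}f_1+\Pi_{V_2}f_2$. Applying part (ii) of Lemma \ref{angleequiv} in the empirical inner product on $V$ (valid on $\mathcal{E}_\delta$), its empirical norm is at most $(1-\rho_0)^{-1/2}$ times the total LSE error, which is in turn dominated by $\Vert f_1-\Pi_{V_1}f_1\Vert^2+\Vert f_2-\Pi_{V_2}f_2\Vert^2$ and a noise contribution. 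The noise-leakage sub-piece $\hat{\Pi}_{W_1}(\hat{\Pi}_V\boldsymbol{\epsilon})_2$, conditional on $X$, is a linear image of $\boldsymbol{\epsilon}$ whose expected squared empirical norm equals $\sigma^2/n$ times the squared Frobenius norm of the empirical analogue of $\Pi_{V_2}|_{W_1}$. The main obstacle is to show that this empirical Hilbert--Schmidt norm is, on $\mathcal{E}_\delta$, dominated by $\Vert\Pi_{V_2}|_{W_1}\Vert_{HS}^2$ up to the factors $(1-\delta)^{-4}(1-\rho_0^2)^{-1}$ appearing in the theorem --- this requires carefully transferring the Gram-matrix comparisons embedded in the event $\mathcal{E}_\delta$ to these Hilbert--Schmidt norms, leveraging the oblique-decomposition stability from Assumption \ref{angle}.
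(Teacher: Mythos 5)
Your overall architecture is the paper's: split on $\mathcal{E}_\delta$, reuse the truncation argument of Theorem \ref{thm1} for $R_n$, apply Pythagoras with $\Pi_{W_1}$ and norm equivalence, and decompose $\hat{f}_1$ via $\hat{\Pi}_{W_1}\hat{\Pi}_V=\hat{\Pi}_{W_1}$; your treatment of the deterministic correction $(f_2-\Pi_{V_2}f_2)$ plus the $V_2$-component of the LSE error parallels Lemmas \ref{lb1}--\ref{lb2} (though note you should use part (iii) of Lemma \ref{angleequiv}, i.e.\ Proposition \ref{empangle}, and track the $\frac{1+\delta}{1-\delta}$ degradation, otherwise you land at $(1-\rho_0)^{-1}$ instead of the stated $(1-\rho_0^2)^{-1}$). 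But two steps as you describe them would not yield the stated inequality. First, bounding the summand $\hat{\Pi}_{W_1}\bigl((f_1-\Pi_{W_1}f_1)(X_1^i)\bigr)_i$ by the contraction property gives $\Vert f_1-\Pi_{W_1}f_1\Vert^2$ again, on top of the Pythagoras term, so your leading bias constant is roughly $2$, not $1+O(\varphi^2d/n)$. The whole point of the theorem is the first-order sharp constant; to get it you need the analogue of Lemma \ref{infty} (equation \eqref{eq:bieq2}): since the population projection of $f_1-\Pi_{W_1}f_1$ onto $W_1$ vanishes, $\mathbb{E}\bigl[1_{\mathcal{E}_\delta}\Vert\Pi_{W_1}f_1-\hat{\Pi}_{W_1}f_1\Vert_n^2\bigr]\leq\frac{1}{(1-\delta)^2}\frac{\varphi^2d}{n}\Vert f_1-\Pi_{W_1}f_1\Vert^2$. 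This, together with the analogous $\frac{\dim W_1}{n}\frac{\varphi^2d}{n}$ term from the variance analysis, is where the multiplicative factor $\bigl(1+\frac{1+\delta}{(1-\delta)^3}\frac{1}{1-\rho_0^2}\frac{2\varphi^2d}{n}\bigr)$ comes from; it does \emph{not} come from Young's inequality $(a+b)^2\leq(1+\eta)a^2+(1+\eta^{-1})b^2$ — with $\eta\asymp\varphi^2d/n$ the factor $1+\eta^{-1}\asymp n/(\varphi^2d)$ would blow up the additive correction terms ($\Vert f_2-\Pi_{V_2}f_2\Vert^2$ and the Hilbert--Schmidt term), which appear in the theorem with bounded constants.

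Second, the bound you flag as ``the main obstacle'' is exactly the content that must be proved, and your preparatory identification is also off. The noise leakage is $\boldsymbol{\epsilon}\mapsto\hat{\Pi}_{W_1}(\hat{\Pi}_V\boldsymbol{\epsilon})_2$, where $(\cdot)_2$ is an \emph{oblique} decomposition inside the empirical $V$; its conditional second moment is $\sigma^2\operatorname{tr}(AA^T)/n$ for that composite (non-self-adjoint) operator, not the Frobenius norm of an ``empirical $\Pi_{V_2}|_{W_1}$''. The paper handles this (for the equivalent quantity $\hat{\Pi}_{W_1}(\hat{\Pi}_V\boldsymbol{\epsilon})_1$) in Proposition \ref{backf1} by expanding the oblique component through the von Neumann alternating series (Lemma \ref{Arvn}) and telescoping the traces, which is precisely where the clean split $\frac{\sigma^2\dim W_1}{n}+\frac{1}{1-\rho_{0,\delta}^2}\frac{\sigma^2\operatorname{tr}(\hat{\Pi}_{W_1}\hat{\Pi}_{V_2})}{n}$ (no cross term, no inflation of the leading $\dim W_1$ term) and the factor $(1-\rho_0^2)^{-1}$ in front of the HS term originate. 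After that one still needs the empirical-to-population step $\frac{1}{n}\mathbb{E}\bigl[1_{\mathcal{E}_\delta}\operatorname{tr}(\hat{\Pi}_{W_1}\hat{\Pi}_{V_2})\bigr]\leq\frac{1}{(1-\delta)^2}\bigl(\frac{\Vert\Pi_{V_2}|_{W_1}\Vert_{HS}^2}{n}+\frac{\dim W_1}{n}\frac{\varphi^2d}{n}\bigr)$ (Proposition \ref{backf2}), proved by writing the projections through the design matrices, using $\Vert\frac1nZ_j^TZ_j-I\Vert_{\operatorname{op}}\leq\delta$ on $\mathcal{E}_\delta$ together with the trace inequalities of Lemma \ref{trlem}, and computing $\mathbb{E}\bigl[\langle\phi_{1j},\phi_{2k}\rangle_n^2\bigr]=\langle\phi_{1j},\phi_{2k}\rangle^2+\frac1n\operatorname{Var}(\phi_{1j}(X_1)\phi_{2k}(X_2))$ with the variance controlled by Assumption \ref{assinfty}. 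The event $\mathcal{E}_\delta$ alone (operator-norm control of the Gram matrix of $V$) does not ``transfer'' empirical Hilbert--Schmidt quantities to population ones; the second-moment computation of the cross-Gram matrix is indispensable and is absent from your sketch.
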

In order to state a corollary of Theorem \ref{thm2} similar to Corollary \ref{corthm1}, we have to discuss the quantity $\left\|\Pi_{V_2}|_{W_1}\right\|^2_{HS}$. If $\{\phi_{1k}\}_{1\leq k\leq \dim W_1}$ is an orthonormal basis of $W_1$, then it can be bounded as follows:
\begin{equation}\label{eq:pog}
\left\|\Pi_{V_2}|_{W_1}\right\|^2_{HS}=\sum_{k=1}^{\dim W_1}\left\|\Pi_{V_2}\phi_{1k}\right\|^2\leq \sum_{k=1}^{\dim W_1}\rho_0^2\|\phi_{1k}\|^2=\rho_0^2\dim W_1,
\end{equation}
where the inequality can be shown as in \eqref{eq:pa1}. Using this bound, we get a variance term which coincides (up to first order) with the one in Theorem \ref{thm1}. However, \eqref{eq:pog} can be considerably improved under certain Hilbert-Schmidt Assumptions. In particular, we will derive upper bounds which are dimension free. The first assumption is as follows:
\begin{assumption}\label{HSce} Suppose that there are measures $\nu_1$ and $\nu_2$ on $\mathcal{B}_1$ and $\mathcal{B}_2$, respectively, such that $X$ has the density $p$ with respect to the product measure $\nu_1\otimes\nu_2$. Let $p_1$ and $p_2$ be the marginal densities of $X_1$ and $X_2$ with respect to the measures $\nu_1$ and $\nu_2$, respectively. Suppose that 
\begin{align*}
 \Vert K\Vert_{HS}^2&=\int_{S_2}\int_{S_1} \left(\frac{p(x_1,x_2)}{p_1(x_1)p_2(x_2)}\right)^2p_1(x_1)p_2(x_2)d\nu_1(x_1)d\nu_2(x_2)\\
&=\int_{S_2}\int_{S_1} \frac{(p(x_1,x_2))^2}{p_1(x_1)p_2(x_2)}d\nu_1(x_1)d\nu_2(x_2)<\infty.
\end{align*}
\end{assumption}
If Assumption \ref{HSce} is satisfied, then we can define the integral operator $K:L^2(\mathbb{P}^{X_1})\rightarrow L^2(\mathbb{P}^{X_2})$ by
\[(K g_1)(x_2)=\int_{S_1} g_1(x_1)\frac{p(x_1,x_2)}{p_1(x_1)p_2(x_2)}p_1(x_1)d\nu_1(x_1)\]
which is the orthogonal projection from $L^2(\mathbb{P}^{X})$ to $L^2(\mathbb{P}^{X_2})$ restricted to $L^2(\mathbb{P}^{X_1})$. Applying \cite[Satz 3.19]{Weid}, we obtain that $K$ is a Hilbert-Schmidt operator with Hilbert-Schmidt norm $\Vert K\Vert_{HS}$. We conclude that
\begin{equation*}
\left\|\Pi_{V_2}|_{W_1}\right\|_{HS}\leq \Vert K\Vert_{HS}.
\end{equation*}
Next, we present a more sophisticated upper bound, by using the spaces $H_1$ and $H_2$ instead of $L^2(\mathbb{P}^{X_1})$ and $L^2(\mathbb{P}^{X_2})$. Let $\Pi_{H_2}$ be the orthogonal projection from $L^2(\mathbb{P}^X)$ to $H_2$, and let $\Pi_{H_2}|_{H_1}$ be the restriction of $\Pi_{H_2}$ to $H_1$. 
\begin{assumption}[Weaker form of Assumption \ref{HSce}]\label{HS} Suppose that $\Pi_{H_2}|_{H_1}$ is a Hilbert-Schmidt operator.
\end{assumption}
If Assumption \ref{HS} is satisfied, then 
\[ 
\left\|\Pi_{V_2}|_{W_1}\right\|_{HS}\leq \left\|\Pi_{H_2}|_{H_1}\right\|_{HS}.
\] 
Letting now $\delta=1/\log n$ and $c=1/\log n$ as in Corollary \ref{corthm1}, we obtain the following corollary of Theorem \ref{thm2}.
\begin{corollary}\label{corthm2} Let Assumption \ref{compass}, \ref{angle}, \ref{assinfty}, and \ref{HSce} be satisfied. Suppose that 
\begin{equation*}
 \varphi^2d\leq \frac{n}{(\log n)^4}.
\end{equation*}
Then there is a universal constant $C>0$ such that
\begin{align*}
 &\mathbb{E}\left[\Vert f_1-\hat{f}_{1}^*\Vert^2\right]\\ &\leq 
 \left(\Vert f_1-\Pi_{W_1}f_1\Vert^2+\frac{\sigma^2 \dim W_1}{n}\right)\left(1+C'/\log n\right)\\
&+C'\left(\Vert f_1-\Pi_{V_1}f_1\Vert^2+\Vert f_2-\Pi_{V_2}f_2\Vert^2+\frac{\sigma^2\left\|K\right\|^2_{HS}}{n}+\frac{\|f_1\|^2}{n^{\frac{\kappa}{2}\log n-1}}\right),
\end{align*}
where $C'=C/(1-\rho_0^2)$. Moreover, if Assumption \ref{HS} holds instead of Assumption \ref{HSce}, then the above inequality holds if $\Vert K\Vert_{HS}^2$ is replaced by $\left\|\Pi_{H_2}|_{H_1}\right\|^2_{HS}$. Finally, if Assumption \ref{HS} and \ref{HSce} are not satisfied, then the above inequality holds if $\Vert K\Vert_{HS}^2$ is replaced by $\rho_0^2\dim W_1$.
\end{corollary}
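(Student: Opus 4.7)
The plan is to invoke Theorem \ref{thm2} with the specific choice $\delta=1/\log n$ and then simplify the constants and the remainder $R_n$ exactly as was done to obtain Corollary \ref{corthm1} from Theorem \ref{thm1}. All the substantive analysis has already been done in Theorem \ref{thm2}; what is left is bookkeeping on the prefactors, a routine Hilbert--Schmidt bound on $\|\Pi_{V_2}|_{W_1}\|_{HS}$, and the same tail argument for $R_n$ as in Corollary \ref{corthm1}.

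First, I would substitute $\delta=1/\log n$ throughout the bound \eqref{eq:thm2eq}. Assuming $n$ is large enough that $\delta\leq 1/2$ (smaller $n$ can be absorbed into the universal constant $C$), one has $(1+\delta)/(1-\delta)^k = 1 + O(1/\log n)$ for $k\leq 4$. Combined with the hypothesis $\varphi^2 d/n\leq 1/(\log n)^4$, the multiplicative factor in front of the oracle term $\|f_1-\Pi_{W_1}f_1\|^2+\sigma^2 \dim W_1/n$ in \eqref{eq:thm2eq} becomes $1 + C/((1-\rho_0^2)\log n) = 1+C'/\log n$; similarly the $1/(1-\delta)$ sitting in front of $\sigma^2 \dim W_1/n$ only contributes a further $1+O(1/\log n)$ factor which can be absorbed. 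The prefactors $6(1+\delta)/((1-\delta)^2(1-\rho_0^2))$ in front of the approximation term and $(1+\delta)/((1-\delta)^4(1-\rho_0^2))$ in front of the Hilbert--Schmidt variance term are both bounded by $C/(1-\rho_0^2)=C'$ in the same way.

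Second, I would replace $\|\Pi_{V_2}|_{W_1}\|_{HS}$ by $\|K\|_{HS}$ via the following routine argument. For any orthonormal basis $\{\phi_k\}$ of $W_1$, since $V_2\subseteq L^2(\mathbb{P}^{X_2})$ one has $\Pi_{V_2}\phi_k = \Pi_{V_2}(K\phi_k)$, whence $\|\Pi_{V_2}\phi_k\|\leq \|K\phi_k\|$; extending $\{\phi_k\}$ to an orthonormal basis of $L^2(\mathbb{P}^{X_1})$ yields
\begin{equation*}
\bigl\|\Pi_{V_2}|_{W_1}\bigr\|_{HS}^2 = \sum_k \|\Pi_{V_2}\phi_k\|^2 \leq \sum_k \|K\phi_k\|^2 \leq \|K\|_{HS}^2 .
\end{equation*}
The same factorization applied through $\Pi_{H_2}|_{H_1}$ (using $V_2\subseteq H_2$ and $W_1\subseteq H_1$) gives the analogous bound by $\|\Pi_{H_2}|_{H_1}\|_{HS}$ under Assumption \ref{HS}, and the assumption-free bound $\rho_0^2\dim W_1$ is already recorded in \eqref{eq:pog}. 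These three substitutions produce the three versions of the corollary.

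Third, the remainder $R_n$ is handled exactly as in the paragraph between Theorem \ref{thm1} and Corollary \ref{corthm1}: with $c=1/\log n$, the hypothesis $\varphi^2d\leq n/(\log n)^4$ is precisely $\varphi^2d\leq c\delta^2n/\log n$, so the choice $k_n^2=\|f_1\|^2 n^{\kappa/(2c)}=\|f_1\|^2 n^{\kappa(\log n)/2}$ bounds the first summand of $R_n$ by $O(\|f_1\|^2 n^{-(\kappa/2)\log n + 1})$, while the exponential summand is controlled by $\exp(-\kappa(\log n)^2)$, which kills any polynomial factor. The only step requiring genuine care is the first: one has to verify that the various $(1\pm\delta)$-expansions multiply out consistently so that the $1/(1-\rho_0^2)$ is confined to the $C'/\log n$ correction and to the stated $C'$-prefactors, and does not leak into the leading $1$ of the oracle factor $(1+C'/\log n)$. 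This clean separation of roles is exactly what Theorem \ref{thm2} is designed to achieve, and the corollary then follows upon collecting terms.
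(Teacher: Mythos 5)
Your proposal follows essentially the same route as the paper's: Corollary \ref{corthm2} is obtained by specializing Theorem \ref{thm2} to $\delta=c=1/\log n$ (so that $\varphi^2d\leq c\delta^2 n/\log n$), bounding $\left\|\Pi_{V_2}|_{W_1}\right\|_{HS}$ by $\Vert K\Vert_{HS}$, by $\left\|\Pi_{H_2}|_{H_1}\right\|_{HS}$, or by $\rho_0\sqrt{\dim W_1}$ exactly as in the discussion preceding the corollary (your factorization argument through $K$ is the intended justification), and recycling the $R_n$ bound with $k_n^2=\|f_1\|^2n^{(\kappa/2)\log n}$ from Corollary \ref{corthm1}. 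The only cosmetic imprecision is that the first summand of $R_n$ carries the factor $\|f-\Pi_{V_2}f\|^2+\sigma^2$ rather than just $\|f_1\|^2$, but these contributions are absorbed into the remaining terms of the stated bound in the same implicit way the paper passes from its $R_n$ estimate to Corollary \ref{corthm1}.
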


Now the first two terms in the brackets on the right hand side are equal to the bias term and the variance term of the same estimator with $V_2=0$ in the model $Y=f_1(X_1)+\epsilon$. As in Corollary \ref{corthm1}, we see that the choices of $V_1$ and $V_2$ do not effect any of the other terms, the only restriction is given by \eqref{eq:statmodass}.

Finally, we give an alternative representation of the Hilbert-Schmidt norm $\left\|\Pi_{H_2}|_{H_1}\right\|_{HS}$ using the operator $\Pi_{H_1}\Pi_{H_2}\Pi_{H_1}$ (which we consider as a map from $H_1$ to $H_1$). To simplify the exposition, we suppose that $H_1$ is separable, which implies that each orthonormal basis of $H_1$ is countable (see, e.g., \cite[Chapter II]{RS}). From Assumption \ref{HS}, it follows that $\Pi_{H_1}\Pi_{H_2}\Pi_{H_1}$ is compact (see, e.g., \cite[Chapter 30.8]{La}). Since it is also symmetric and positive, the spectral theorem (see, e.g., \cite[Theorem 3 in Chapter 28]{La}) implies that there is an orthonormal basis for $H_1$ consisting of eigenvectors of $\Pi_{H_1}\Pi_{H_2}\Pi_{H_1}$. These all have non-negative eigenvalues. We arrange the positive eigenvalues of $\Pi_{H_1}\Pi_{H_2}\Pi_{H_1}$ in decreasing order: $\alpha_1\geq \alpha_2\cdots>0$. We now have:

\begin{lemma} Under the above assumptions, we have
\[\rho_0^2=\alpha_1\]
and
\[\left\|\Pi_{H_2}|_{H_1}\right\|^2_{HS}=\operatorname{tr}\left(\Pi_{H_1}\Pi_{H_2}\Pi_{H_1}\right)=\sum_{k\geq 1}\alpha_k.\]
\end{lemma}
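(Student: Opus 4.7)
The plan is to reduce both identities to a clean variational/spectral statement about the positive compact self-adjoint operator $T := \Pi_{H_1}\Pi_{H_2}\Pi_{H_1}$ on $H_1$, using the basic algebraic identity $\|\Pi_{H_2} h_1\|^2 = \langle h_1, T h_1\rangle$ for $h_1 \in H_1$.

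For the first identity, I would first simplify the definition of $\rho_0$. For any fixed $h_1 \in H_1$, the Cauchy--Schwarz inequality (with equality when $h_2$ is a scalar multiple of $\Pi_{H_2} h_1$) yields
\[
\sup_{0 \neq h_2 \in H_2} \frac{\langle h_1, h_2\rangle}{\|h_2\|} = \|\Pi_{H_2} h_1\|,
\]
so $\rho_0 = \sup_{0 \neq h_1 \in H_1} \|\Pi_{H_2} h_1\|/\|h_1\|$. Then I would use $\Pi_{H_2}^2 = \Pi_{H_2}$, self-adjointness of $\Pi_{H_2}$, and the fact that $h_1 = \Pi_{H_1} h_1$ for $h_1 \in H_1$ to get
\[
\|\Pi_{H_2} h_1\|^2 = \langle h_1, \Pi_{H_2} h_1\rangle = \langle \Pi_{H_1} h_1, \Pi_{H_2}\Pi_{H_1} h_1\rangle = \langle h_1, T h_1\rangle.
\]
Hence $\rho_0^2 = \sup_{0 \neq h_1 \in H_1} \langle h_1, T h_1\rangle / \|h_1\|^2$, which is the operator norm of the positive self-adjoint operator $T$ on $H_1$. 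Since $T$ is compact, positive and symmetric, its operator norm equals its largest eigenvalue $\alpha_1$, giving $\rho_0^2 = \alpha_1$.

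For the second identity, let $\{\phi_k\}$ be a (countable) orthonormal basis of $H_1$ consisting of eigenvectors of $T$ with $T\phi_k = \beta_k \phi_k$, $\beta_k \geq 0$, provided by the spectral theorem. Using the identity from the previous step applied to each basis vector,
\[
\|\Pi_{H_2}|_{H_1}\|_{HS}^2 = \sum_{k} \|\Pi_{H_2} \phi_k\|^2 = \sum_{k}\langle \phi_k, T\phi_k\rangle = \sum_{k}\beta_k = \operatorname{tr}(T),
\]
where the definition of the Hilbert--Schmidt norm is independent of the chosen orthonormal basis. Removing the zero eigenvalues from the sum then gives the claimed $\sum_{k \geq 1} \alpha_k$.

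There is no real obstacle here: the proof is an exercise in unwinding definitions, with the one mildly delicate point being to justify interchanging $\Pi_{H_1}\Pi_{H_2}$ and $\Pi_{H_2}\Pi_{H_1}$ inside the inner product when restricting to $h_1 \in H_1$, which is handled by $\Pi_{H_1} h_1 = h_1$ together with self-adjointness of all projections involved. The separability assumption on $H_1$ is only used so that eigenvectors can be indexed by $\mathbb{N}$ and summed as an ordinary series; without it one would write the sum as a net but the conclusion is the same.
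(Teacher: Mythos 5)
Your proof of the second identity is exactly the paper's argument: sum $\|\Pi_{H_2}\phi_{1k}\|^2=\langle \Pi_{H_1}\Pi_{H_2}\Pi_{H_1}\phi_{1k},\phi_{1k}\rangle=\alpha_k$ over an eigenbasis of $\Pi_{H_1}\Pi_{H_2}\Pi_{H_1}$ and use basis-independence of the Hilbert--Schmidt norm. The paper explicitly omits the proof of $\rho_0^2=\alpha_1$, and your variational argument (reducing $\rho_0$ to $\sup_{0\neq h_1\in H_1}\langle h_1,\Pi_{H_1}\Pi_{H_2}\Pi_{H_1}h_1\rangle/\|h_1\|^2$ and identifying this with the top eigenvalue of the compact positive self-adjoint operator) is a correct and standard way to fill that gap.
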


\begin{proof}
We only prove the second equality. Let $\{\phi_{1k}\}_{k\geq 1}$ be an orthonormal basis for $H_1$ consisting of eigenvectors of $\Pi_{H_1}\Pi_{H_2}\Pi_{H_1}$. Then 
\begin{align*}
\left\|\Pi_{H_2}|_{H_1}\right\|^2_{HS}&=\sum_{k\geq 1}\|\Pi_{H_2}\phi_{1k}\|^2=\sum_{k\geq 1}\left\langle \Pi_{H_1}\Pi_{H_2}\Pi_{H_1}\phi_{1k},\phi_{1k}\right\rangle^2=\sum_{k\geq 1}\alpha_k.
\end{align*}
\end{proof}
\begin{example}
Consider the case that $X=(X_1,X_2)$ is a bivariate Gaussian random variable such that $\mathbb{E}[X_1]=\mathbb{E}[X_2]=0$, $\mathbb{E}[X_1^2]=\mathbb{E}[X_2^2]=1$, and $\mathbb{E}\left[X_1X_2\right]=\rho$. 

First, suppose that $H_1$ and $H_2$ are the spaces of linear centered functions, i.e., $H_1=\{g_1: g_1(x_1)=a\cdot x_1, a\in\mathbb{R}\}$ and $H_2=\{g_2: g_2(x_2)=a\cdot x_2, a\in\mathbb{R}\}$. Then it is easy to see that 
\[\rho_0=|\rho|\]
and
\[\left\|\Pi_{H_2}|_{H_1}\right\|^2_{HS}=\rho^2.\]

Second, suppose that $H_1=\lbrace g_1\in L^2(\mathbb{P}^{X_1}):\mathbb{E}\left[ g_1(X_1)\right]=0 \rbrace$ and $H_2=L^2(\mathbb{P}^{X_2})$. Then it follows from \cite{Lan} that $\Pi_{H_1}\Pi_{H_2}\Pi_{H_1}$ has eigenvalues $\{\rho^2,\rho^{4},\dots\}$. Hence, the above lemma implies that
\[\rho_0=|\rho|\]
and
\[\left\|\Pi_{H_2}|_{H_1}\right\|^2_{HS}=\sum_{k=1}^\infty\rho^{2k}=\frac{\rho^2}{1-\rho^2},\]
which is an improvement over \eqref{eq:pog} if $\dim W_1$ is large.
\end{example}

\subsection{Regularity conditions on the design densities} In this section, we present two improvements of Theorem \ref{thm2} which are possible under Assumption \ref{HSce} and additional regularity conditions on the design densities. In particular, we show that the dependence of the bias term on the function $f_2$ can decrease considerably. 

By Assumption \ref{HSce} and Fubini's theorem, we have
\begin{equation}\label{eq:condl2}\frac{p(x_1,\cdot)}{p_1(x_1)p_2(\cdot)}\in L^2(\mathbb{P}^{X_2})\end{equation}
for $\mathbb{P}^{X_1}$-almost all $x_1$. Thus we can make the following assumption. Suppose that there is a real number $\psi(V_2)$ and a function $h_1\in L^2(\mathbb{P}^{X_1})$ such that
\begin{equation}\label{eq:condjdens}
\left\Vert(1-\Pi_{V_2}) \frac{p(x_1,\cdot)}{p_1(x_1)p_2(\cdot)}\right\Vert_{L^2(\mathbb{P}^{X_2})}\leq h_1(x_1)\psi(V_2)
\end{equation}
for $\mathbb{P}^{X_1}$-almost all $x_1$. In analogy, we let $\phi(V_2)$ be a real number such that $\Vert f_2-\Pi_{V_2}f_2\Vert\leq\phi(V_2)$. We prove:
\begin{thm}\label{thm3}Let Assumption \ref{compass}, \ref{angle}, \ref{assinfty}, and \ref{HSce} be satisfied. Let $0<\delta<1$ be a real number.
Suppose that \eqref{eq:condjdens} is satisfied. Moreover, suppose that $\Vert g_1\Vert_\infty\leq\varphi\sqrt{d_1}\Vert g_1\Vert$ for all $g_1\in V_1$, where $\varphi$ is the constant from Assumption \ref{assinfty}. Then \eqref{eq:thm2eq} holds when
\[\frac{1+\delta}{(1-\delta)^2}\frac{6}{1-\rho_0^2}\left(\Vert f_1-\Pi_{V_1}f_1\Vert^2+\Vert f_2-\Pi_{V_2}f_2\Vert^2\right)\]
is replaced by
\begin{multline*}
\frac{(1+\delta)^2}{(1-\delta)^4}\frac{12}{1-\rho_0^2}\left(\|f_1-\Pi_{V_1}f_1\|^2+\frac{\Vert h_1\Vert^2(\phi(V_2)\psi(V_2))^2}{1-\rho_{0}^2}\right.\\
 + \left.\frac{1}{n}\frac{\Vert h_1\Vert^2\Vert f_2-\Pi_{V_2}f_2\Vert^2_\infty(\psi(V_2))^2}{1-\rho_{0}^2}+\frac{(\phi(V_2))^2}{1-\rho_{0}^2}\frac{\varphi^2d_1}{n}\right).
\end{multline*}
\end{thm}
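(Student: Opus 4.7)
The plan is to follow the overall structure of the proof of Theorem~\ref{thm2} and to re-examine only the step in which the bias contribution from $f_2-\Pi_{V_2}f_2$ is estimated. In that proof, on the event $\mathcal{E}_\delta$ one decomposes the residual $\hat{f}_V-f$ via sumspace geometry, and the quantity $\Vert f_2-\Pi_{V_2}f_2\Vert^2$ enters because the ``cross bias'' of the nuisance residual onto $V_1$ is bounded crudely by $\Vert f_2-\Pi_{V_2}f_2\Vert$ times the norm of a unit vector in $V_1$. The refined bound of Theorem~\ref{thm3} will come from replacing this crude estimate by a sharper one using the kernel $k(x_1,x_2)=p(x_1,x_2)/(p_1(x_1)p_2(x_2))$; all other steps of the proof of Theorem~\ref{thm2} should be copied verbatim, which explains why the first summand $\Vert f_1-\Pi_{V_1}f_1\Vert^2$ and the factor $(1-\rho_0^2)^{-1}$ reappear unchanged.

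The key population-level estimate I would establish first is the inequality
\[
 |\langle g_1, f_2-\Pi_{V_2}f_2\rangle|\leq \Vert h_1\Vert\,\psi(V_2)\,\phi(V_2)\,\Vert g_1\Vert\qquad\text{for every }g_1\in H_1.
\]
This is obtained by writing, via Fubini and Assumption~\ref{HSce},
\[
\langle g_1,f_2-\Pi_{V_2}f_2\rangle=\int g_1(x_1)\bigl\langle k(x_1,\cdot),f_2-\Pi_{V_2}f_2\bigr\rangle_{L^2(\mathbb{P}^{X_2})}p_1(x_1)\,d\nu_1(x_1),
\]
replacing $k(x_1,\cdot)$ by $(I-\Pi_{V_2})k(x_1,\cdot)$ since $f_2-\Pi_{V_2}f_2\perp V_2$, applying \eqref{eq:condjdens} to bound that norm by $h_1(x_1)\psi(V_2)$, and using Cauchy--Schwarz in both the outer and inner integrals together with $\Vert f_2-\Pi_{V_2}f_2\Vert\leq\phi(V_2)$. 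Propagating this bound through the sumspace projection analysis of Theorem~\ref{thm2} replaces the population contribution $\Vert f_2-\Pi_{V_2}f_2\Vert^2$ by $\Vert h_1\Vert^2\psi(V_2)^2\phi(V_2)^2/(1-\rho_0^2)$; the extra factor of $(1-\rho_0^2)^{-1}$ is the usual cost of decoding a $V_1$-norm from a $V$-norm through Lemma~\ref{angleequiv}.

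The harder step is transferring the population bound to the empirical setting, because the proof of Theorem~\ref{thm2} uses $\mathcal{E}_\delta$ only for elements of $V$, while $f_2-\Pi_{V_2}f_2$ lives outside $V$. I would therefore insert a uniform concentration step for
\[
 \sup_{g_1\in V_1,\;\Vert g_1\Vert=1}\bigg|\frac{1}{n}\sum_{i=1}^n g_1(X_1^i)(f_2-\Pi_{V_2}f_2)(X_2^i)-\langle g_1,f_2-\Pi_{V_2}f_2\rangle\bigg|.
\]
A Bernstein-type inequality on a fixed orthonormal basis of $V_1$ (there are $d_1$ such directions) combined with Assumption~\ref{assinfty} restricted to $V_1$, i.e.\ $\Vert g_1\Vert_\infty\leq\varphi\sqrt{d_1}\Vert g_1\Vert$, gives the fluctuation. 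The variance of each term is controlled by $\Vert f_2-\Pi_{V_2}f_2\Vert_\infty^2$, and a second application of the kernel argument (bounding the second moment of $g_1(X_1)(f_2-\Pi_{V_2}f_2)(X_2)$ by $\Vert h_1\Vert^2\psi(V_2)^2\Vert f_2-\Pi_{V_2}f_2\Vert_\infty^2$ up to constants) is what produces the term $n^{-1}\Vert h_1\Vert^2\Vert f_2-\Pi_{V_2}f_2\Vert_\infty^2\psi(V_2)^2/(1-\rho_0^2)$. The bias contribution $\phi(V_2)^2\varphi^2d_1/(n(1-\rho_0^2))$ then appears from the dimension-dependent envelope in the same concentration argument, after using $\Vert f_2-\Pi_{V_2}f_2\Vert\leq\phi(V_2)$ as the crude fallback bound whenever the sharp kernel estimate cannot be invoked.

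The main obstacle will be the bookkeeping in this last concentration step: one must keep the $\psi(V_2)$ improvement on the dominant term while showing that the unavoidable finite-sample remainders (those proportional to $\Vert f_2-\Pi_{V_2}f_2\Vert_\infty$ and to $\varphi^2 d_1/n$) really enter only with the claimed $1/n$ prefactors, and with the slightly worse constants $(1+\delta)^2/(1-\delta)^4$ that reflect one extra round of passing between empirical and population inner products. Once this is controlled, plugging the refined bias estimate into the argument of Theorem~\ref{thm2} in place of the crude $\Vert f_2-\Pi_{V_2}f_2\Vert^2$ delivers the claimed inequality.
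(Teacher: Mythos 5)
Your population-level kernel estimate is exactly the right one (it is the paper's bound $\Vert\Pi_{V_1}(f_2-\Pi_{V_2}f_2)\Vert\leq\Vert h_1\Vert\psi(V_2)\phi(V_2)$, proved the same way via Fubini, self-adjointness of $1-\Pi_{V_2}$, \eqref{eq:condjdens} and Cauchy--Schwarz), and your second-moment/concentration step over an orthonormal basis of $V_1$ is close in spirit to how the paper controls the empirical fluctuations. The genuine gap is in the step you dismiss as ``propagating this bound through the sumspace projection analysis of Theorem~\ref{thm2}, the extra factor $(1-\rho_0^2)^{-1}$ being the cost of decoding a $V_1$-norm from a $V$-norm through Lemma~\ref{angleequiv}.'' Lemma~\ref{angleequiv} cannot perform this propagation: it only compares $\Vert u_1\Vert_n$ with $\Vert u_1+u_2\Vert_n$, so from it you can only recover the crude chain $\Vert(\hat{\Pi}_Vf_2)_1\Vert_n\leq(1-\rho_{0,\delta}^2)^{-1/2}\Vert\hat{\Pi}_V(f_2-\Pi_{V_2}f_2)\Vert_n\leq(1-\rho_{0,\delta}^2)^{-1/2}\Vert f_2-\Pi_{V_2}f_2\Vert_n$, which is precisely the Theorem~\ref{thm2} bound of order $\phi(V_2)$ and loses the $\psi(V_2)$ improvement entirely. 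What is needed is a mechanism that converts ``$\langle g_1,\text{nuisance residual}\rangle_n$ is small for all $g_1\in V_1$'' into ``the $V_1$-component of the least-squares fit is small,'' and this requires the von Neumann/backfitting expansion of Lemma~\ref{Arvn} together with the contraction bounds \eqref{eq:pa1}--\eqref{eq:pa2}: one writes $(\hat{\Pi}_Vf_2)_1=(\hat{\Pi}_V(f_2-\hat{\Pi}_{V_2}f_2))_1=\lim_k\sum_{j=0}^k(\hat{\Pi}_{V_1}\hat{\Pi}_{V_2})^j\hat{\Pi}_{V_1}(f_2-\hat{\Pi}_{V_2}f_2)$, the series collapsing only because the residual is \emph{empirically} orthogonal to $V_2$, which yields $\Vert(\hat{\Pi}_Vf_2)_1\Vert_n\leq(1-\rho_{0,\delta}^2)^{-1}\Vert\hat{\Pi}_{V_1}(f_2-\hat{\Pi}_{V_2}f_2)\Vert_n$. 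This is the content of Proposition~\ref{abrutemp}, and it is the heart of the proof; your plan never invokes it or any substitute for it.

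A related consequence of this omission is that you work throughout with the population residual $f_2-\Pi_{V_2}f_2$. If you try to run the series (or the equivalent normal-equations argument) with that residual, the terms $(\hat{\Pi}_{V_1}\hat{\Pi}_{V_2})^j(f_2-\Pi_{V_2}f_2)$, $j\geq1$, do not vanish, and the leftover $\hat{\Pi}_{V_2}(f_2-\Pi_{V_2}f_2)$ contributes an extra fluctuation of order $\varphi^2 d\,\phi(V_2)^2/n$ (with the full dimension $d$, not $d_1$), so the stated remainder $\phi(V_2)^2\varphi^2d_1/n$ is not reached. The paper avoids this by keeping the empirical residual $f_2-\hat{\Pi}_{V_2}f_2$ and comparing back to the population projection only via the projection theorem, $\Vert(1-\hat{\Pi}_{V_2})h\Vert_n\leq\Vert(1-\Pi_{V_2})h\Vert_n$; but then your clean population inner-product bound no longer applies directly, which is exactly why Proposition~\ref{abrutemp} splits each basis function as $\phi_j=\phi_{j,\Pi_2}+(\phi_j-\phi_{j,\Pi_2})$ (conditional expectation plus martingale part): the first part carries \eqref{eq:condjdens} inside the empirical inner product and produces both $\Vert h_1\Vert^2(\psi(V_2)\phi(V_2))^2$ and the diagonal term $n^{-1}\Vert h_1\Vert^2\Vert(1-\Pi_{V_2})f_2\Vert_\infty^2\psi(V_2)^2$, while the centered part, whose cross terms vanish by conditioning, produces $\phi(V_2)^2\varphi^2d_1/n$ via $\Vert\sum_j\phi_j^2\Vert_\infty\leq\varphi^2d_1$ (this is where the extra hypothesis on $V_1$ enters). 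So the architecture you would need is the paper's Proposition~\ref{abrutemp} plus the three-term decomposition $f_1-(\hat{\Pi}_Vf)_1=(f_1-(\Pi_Vf_1)_1)+((\Pi_Vf_1)_1-(\hat{\Pi}_Vf_1)_1)-(\hat{\Pi}_Vf_2)_1$, not a verbatim copy of Theorem~\ref{thm2} with one inequality swapped.
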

Theorem \ref{thm3} shows that the regularity conditions on $p/(p_1p_2)$ and $f_2$ have similar effects, which can be seen from second term. In contrast to Theorems \ref{thm1} and \ref{thm2}, Theorem \ref{thm3} shows that the estimator $\hat{f}_1^*$ can also behave well when $f_2$ is considerably less regular than $f_1$. For instance, if we apply Theorem \ref{thm3} to an asymptotic scenario, then, under suitable conditions on $\psi(V_2)$, the regularity conditions on $f_2$ can be (almost) reduced to $\phi(V_2)\rightarrow 0$ (see, e.g., Corollary \ref{appliabrut}).

For fixed $x_1$, let the function $r(x_1,\cdot)$ be the orthogonal projection of $p(x_1,\cdot)/(p_1(x_1)p_2(\cdot))$ on $H_2$. By \eqref{eq:condl2}, $r(x_1,\cdot)$ is defined for $\mathbb{P}^{X_1}$-almost all $x_1$. Thus we can consider the following weaker version of \eqref{eq:condjdens}.
Suppose that there exists a real number $\psi_\Pi(V_2)$ and a function $h_1\in L^2(\mathbb{P}^{X_1})$ such that 
\begin{equation}\label{eq:condjdenspr}
 \left\| (1-\Pi_{V_2})r(x_1,\cdot)\right\|_{L^2(\mathbb{P}^{X_2})}\leq h_1(x_1)\psi_\Pi(V_2)
\end{equation}
for $\mathbb{P}^{X_1}$-almost all $x_1$. If \eqref{eq:condjdenspr} holds, then we obtain the following theorem. Note that, compared to Theorem \ref{thm3}, the last term is not always negligible.
\begin{thm}\label{thm4} Let Assumption \ref{compass}, \ref{angle}, \ref{assinfty}, and \ref{HSce} be satisfied. Let $0<\delta<1$ be a real number. Suppose that \eqref{eq:condjdenspr} is satisfied. Then Theorem \ref{thm2} also holds when the term 
\[\frac{1+\delta}{(1-\delta)^2}\frac{6}{1-\rho_0^2}\left(\Vert f_1-\Pi_{V_1}f_1\Vert^2+\Vert f_2-\Pi_{V_2}f_2\Vert^2\right)\]
is replaced by
\begin{multline*}
\frac{1+\delta}{(1-\delta)^3}\frac{6}{1-\rho_0^2}\left(\|f_1-\Pi_{V_1}f_1\|^2\left(1+\frac{2\varphi^2d}{n}\right)\right.\\
\left.+\frac{\Vert h_1\Vert^2(\phi(V_2)\psi_\Pi(V_2))^2}{1-\rho_{0}^2}+\frac{(\phi(V_2))^2}{1-\rho_{0}^2}\frac{2\varphi^2 d}{n}\right).
\end{multline*}
\end{thm}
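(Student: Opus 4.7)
The plan is to modify only the bias-estimation step of the proof of Theorem \ref{thm2}; the variance term, the Hilbert--Schmidt contribution, and the remainder $R_n$ carry over unchanged. In Theorem \ref{thm2} the $\|f_2-\Pi_{V_2}f_2\|^2$ term arises because, when projecting $f=f_1+f_2$ onto $V=V_1+V_2$ and extracting the $V_1$-component, one inherits a leakage of $f_2$ into $V_1$ that is bounded naively by $\|\Pi_{V_1}(f_2-\Pi_{V_2}f_2)\|\leq\phi(V_2)$. Assumption \eqref{eq:condjdenspr} will be used to replace this estimate by a much sharper one.

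\textbf{Hilbert-space duality estimate.} The new ingredient is the bound: for every $v_1\in H_1$,
\[
|\langle v_1,f_2-\Pi_{V_2}f_2\rangle|\leq \|v_1\|\,\|h_1\|\,\phi(V_2)\,\psi_\Pi(V_2).
\]
I would prove this by expanding the inner product as a double integral, factorising the joint density as $p=(p/(p_1p_2))\,p_1p_2$ from Assumption \ref{HSce}, and performing the $x_2$-integration first; the resulting inner integral, $\langle f_2-\Pi_{V_2}f_2,\,p(x_1,\cdot)/(p_1(x_1)p_2(\cdot))\rangle_{L^2(\mathbb{P}^{X_2})}$, is unchanged if the density ratio is replaced by its $H_2$-projection $r(x_1,\cdot)$ (since $f_2-\Pi_{V_2}f_2\in H_2$), and further unchanged if $r(x_1,\cdot)$ is replaced by $(1-\Pi_{V_2})r(x_1,\cdot)$ (since $f_2-\Pi_{V_2}f_2\perp V_2$). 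Two applications of Cauchy--Schwarz combined with \eqref{eq:condjdenspr} then close the argument; taking the supremum over $v_1\in V_1$ in the unit ball yields $\|\Pi_{V_1}(f_2-\Pi_{V_2}f_2)\|\leq\|h_1\|\phi(V_2)\psi_\Pi(V_2)$.

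\textbf{Propagation through the normal equations.} Let $\Pi_V f=g_1^*+g_2^*$ be the unique $(V_1,V_2)$-decomposition. The normal equations give $g_1^*=\Pi_{V_1}f_1+\Pi_{V_1}(f_2-g_2^*)$ and $g_2^*-\Pi_{V_2}f_2=-\Pi_{V_2}(f_1-g_1^*)$. Together with the minimal angle estimate ($\|\Pi_{V_1}u\|\leq\rho_0\|u\|$ for $u\in V_2$) this yields $\|\Pi_{V_1}(f_2-g_2^*)\|\leq \|h_1\|\phi(V_2)\psi_\Pi(V_2)+\rho_0^2\|f_1-g_1^*\|$, so that solving for $\|f_1-g_1^*\|$ produces a Hilbert-space bias of order $(1-\rho_0^2)^{-2}\bigl(\|f_1-\Pi_{V_1}f_1\|^2+\|h_1\|^2\phi(V_2)^2\psi_\Pi(V_2)^2\bigr)$, which matches the first two new summands in the theorem and explains the $(1-\rho_0^2)^{-2}$ dependence on the $f_2$-related contributions.

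\textbf{Empirical transfer and main obstacle.} The previous steps are carried out at the Hilbert-space level, but the proof of Theorem \ref{thm2} operates in the empirical inner product on $\mathcal{E}_\delta$. The duality identity must therefore be combined with a uniform control of $\sup_{v_1\in V_1,\,\|v_1\|_n\leq 1}|\langle v_1,f_2-\Pi_{V_2}f_2\rangle_n-\langle v_1,f_2-\Pi_{V_2}f_2\rangle|$; using Assumption \ref{assinfty} to pass to the $\ell^\infty$ norm and a variance bound for linear functionals over the $d$-dimensional model $V$, this supremum is of order $\phi(V_2)\sqrt{\varphi^2 d/n}$, which after squaring produces the remainder $(\phi(V_2))^2\cdot 2\varphi^2d/n$ in the statement. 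This step is the main obstacle: unlike in Theorem \ref{thm3}, where the stronger condition \eqref{eq:condjdens} lets one work with $V_1$ alone (hence $\varphi^2 d_1$), the weaker condition \eqref{eq:condjdenspr} only constrains the $H_2$-projection of the density ratio, so the deviation must be taken over all of $V$, forcing the $\varphi^2 d$ factor. Substituting the new bounds into the proof of Theorem \ref{thm2} and tracking constants (an extra $(1-\delta)^{-1}$ factor appears from the additional use of $\mathcal{E}_\delta$) produces the stated inequality.
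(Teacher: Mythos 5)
Your population-level estimate is, in substance, the paper's Proposition \ref{abrut}: the key bound $\Vert\Pi_{V_1}(f_2-\Pi_{V_2}f_2)\Vert\leq\Vert h_1\Vert\phi(V_2)\psi_\Pi(V_2)$ (you obtain it by duality, the paper by Bessel's inequality), followed by a geometric amplification by $(1-\rho_0^2)^{-1}$ (you via the normal equations, the paper via von Neumann's iteration, Lemma \ref{Arvn}). But the way you propagate it does not yield the theorem as stated. Solving your fixed-point inequality for $\Vert f_1-(\Pi_Vf)_1\Vert$ applies the factor $(1-\rho_0^2)^{-1}$ to the \emph{sum} $\Vert f_1-\Pi_{V_1}f_1\Vert+\Vert h_1\Vert\phi(V_2)\psi_\Pi(V_2)$, so after squaring you get $(1-\rho_0^2)^{-2}\Vert f_1-\Pi_{V_1}f_1\Vert^2$, whereas the statement carries only the global factor $(1-\rho_0^2)^{-1}$ on that term; your claim that this ``matches the first two new summands'' is therefore wrong for the first one. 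The paper avoids the loss by splitting $f_1-(\hat{\Pi}_Vf)_1=\bigl(f_1-(\Pi_Vf_1)_1\bigr)-(\Pi_Vf_2)_1+\bigl((\Pi_Vf)_1-(\hat{\Pi}_Vf)_1\bigr)$ and amplifying only the term $(\Pi_Vf_2)_1$; the piece $f_1-(\Pi_Vf_1)_1$ is bounded directly through Lemma \ref{angleequiv}(iii), which costs only a single factor $(1-\rho_0^2)^{-1}$ on the squared norm. (A small additional slip: the second normal equation reads $g_2^*-\Pi_{V_2}f_2=+\Pi_{V_2}(f_1-g_1^*)$; harmless after taking norms.)

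The step you yourself flag as the main obstacle, the empirical transfer, is left as an order-of-magnitude sketch, and it targets a quantity that is not quite the one that arises: if you run the normal-equation argument in the empirical inner product, what appears is $\hat{\Pi}_{V_1}(f_2-\hat{\Pi}_{V_2}f_2)$, tested against data-dependent directions, so before the duality bound applies you must pass from $f_2-\hat{\Pi}_{V_2}f_2$ to $f_2-\Pi_{V_2}f_2$ and control a deviation uniformly over a ball of all of $V$ (not the displayed supremum over $V_1$), and you must do so with explicit constants; ``of order $\phi(V_2)\sqrt{\varphi^2d/n}$'' cannot produce the stated factors $\frac{1+\delta}{(1-\delta)^3}\frac{6}{1-\rho_0^2}$. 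The paper sidesteps the entire issue: because \eqref{eq:condjdenspr} is only used on the deterministic object $(\Pi_Vf_2)_1$ (Proposition \ref{abrut}), its empirical norm has expectation equal to its $L^2(\mathbb{P}^X)$ norm, and all sampling error is concentrated in the single term $(\Pi_Vf)_1-(\hat{\Pi}_Vf)_1$, which is handled by Lemma \ref{angleequiv}(iii) together with Lemma \ref{infty}; this is exactly where the $2\varphi^2d/n$ remainders attached to $\Vert f_1-\Pi_{V_1}f_1\Vert^2$ and $(\phi(V_2))^2$ come from. So your core idea is the right one, but to obtain the theorem with its stated constants you should replace the lumped fixed point plus empirical duality by the paper's decomposition; your empirical-duality route is essentially the machinery of Proposition \ref{abrutemp}, i.e.\ of Theorem \ref{thm3}, and it is precisely what Theorem \ref{thm4} is designed to avoid.
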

\section{Applications}\label{applifjh}
\subsection{The two-dimensional case}\label{a1dim}
In this section, we want to discuss Theorem \ref{thm1} and \ref{thm2} in the case that $X_1$ and $X_2$ take values in $\mathbb{R}$ and that Assumptions \ref{compass} and \ref{angle} are satisfied with 
\[
H_1=\lbrace g_1\in L^2(\mathbb{P}^{X_1})|\mathbb{E}\left[ g_1(X_1)\right]=0 \rbrace
\] 
and
\[H_2=L^2(\mathbb{P}^{X_{2}}).\]
The main remaining issue is to bound the approximation errors. This is possible if the $f_j$ belong to certain nonparametric classes of functions and if the $V_j$ are chosen appropriately. Here, we shall restrict our attention to (periodic) Sobolev smoothness and spaces of trigonometric polynomials. Note that we will also consider H\"{o}lder smoothness and spaces of piecewise polynomials in Section \ref{aam} and \ref{aamsd}.
Recall that the trigonometric basis is given by $\phi_0(x)= 1$, $\phi_{k}(x)=\sqrt{2}\cos(2\pi kx)$ and $\phi_{-k}(x)=\sqrt{2}\sin(2\pi kx)$, $k\geq 1$, where $x\in[0,1]$. 
\begin{assumption}\label{s1dim}
Suppose that the $X_j$ take values in $[0,1]$ and have densities $p_{X_j}$ with respect to the Lebesgue measure on $[0,1]$, which satisfy $c\leq p_{X_j}\leq 1/c$ for some constant $c>0$. Moreover, suppose that the $f_j$ belong to the Sobolev classes 
\[\tilde{W}_j(\alpha_j,K_j)=\left\{\sum_{k\in \mathbb{Z}}^\infty \theta_k\phi_k(x_j)\ :\ \sum_{k\in \mathbb{Z}}^\infty |k|^{2\alpha_j}\theta_{k}^2\leq K_j^2\right\},\]
where $\alpha_j>0$ and $K_j> 0$ (see, e.g., \cite[Definition 1.12]{T}).
\end{assumption}
For $j=1,2$, let $V_j$ be the intersection of $H_j$ with the linear span of the $\phi_k$ (in the variable $x_j$) such that $|k|\leq m_j$, and let $W_1$ be the intersection of $H_1$ with the linear span of the $\phi_k$ (in the variable $x_1$) such that $|k|\leq m_{W_1}$.  Note that we have $d_1=2m_1$ and $d_2=2m_2+1$. Using the definition of the $\tilde{W}_j(\alpha_j,K_j)$, we have for all $h_j\in\tilde{W}_j(\alpha_j,K_j)\cap H_j$,
\begin{equation*}
\|h_j-\Pi_{V_j}h_j\|^2\leq (1/c)K_j^2(1+m_j)^{-2\alpha_j}
\end{equation*}
and thus
\begin{equation}\label{eq:approx1}
\|h_j-\Pi_{V_j}h_j\|^2\leq C_jK_j^2d_j^{-2\alpha_j},
\end{equation}
where $C_j=2^{2\alpha_j}/c$. The same bound holds if $V_1$ and $d_1$ are replaced by $W_1$ and $\dim W_1$. Moreover, by applying the Cauchy-Schwarz inequality, we have $\|g_j\|_\infty\leq \sqrt{1/c}\sqrt{2m_j+1}\|g_j\|$ for all $g_j\in V_j$, which implies that $\|g_j\|_\infty\sqrt{2/c}\sqrt{d_j}\|g_j\|$ for all $g_j\in V_j$. By Remark \ref{assinftyrem}, this implies that Assumption \ref{assinfty} is satisfied with 
\begin{equation}\label{eq:varphie}
\varphi^2\leq \frac{2}{c(1-\rho_0)}.
\end{equation}
We now choose $d_1$ and $d_2$ as the smallest possible integer satisfying
\begin{equation}\label{eq:choices1}
 d_j\geq \frac{n}{4\varphi^2\log^4 n},
\end{equation} 
and we choose $\dim W_1$ (up to constant) equal to
\[\left(\frac{K_1^2n}{\sigma^2}\right)^{\frac{1}{2\alpha_1+1}}.\]
If $n$ is large enough, then these choices imply that \eqref{eq:statmodass} of Corollary \ref{corthm1} is satisfied. Applying \eqref{eq:approx1} and \eqref{eq:choices1}, we obtain that
\[\|f_2-\Pi_{V_2}f_2\|^2\leq C_2K_2^2\left(\frac{n}{4\varphi^2\log^4 n}\right)^{-2\alpha_2},\]
where the last expression is $o(n^{-(\alpha_1/(2\alpha_1+1)})$ if $\alpha_2>\alpha_1/(2\alpha_1+1)$. Finally, note that $\|f_1\|^2\leq \mathbb{E}[(f_1(X_1)-\theta_0)^2]\leq K_1^2/c$. From Corollary \ref{corthm2}, we now obtain the following asymptotic result when the sample size $n$ tends to infinity:
\begin{corollary}\label{cor11dim} Let Assumption \ref{angle} and \ref{s1dim} be satisfied. Suppose that 
\begin{equation}\label{eq:jkdh}
\alpha_2>\alpha_1/(2\alpha_1+1).
\end{equation}
Then
\begin{equation}\label{eq:aseq1}
\limsup_{n\rightarrow \infty}\sup_{f_j\in \tilde{W}_j(\alpha_j,K_j)}\mathbb{E}\left[n^{\frac{2\alpha_1}{2\alpha_1+1}}\Vert f_1-\hat{f}_1^*\Vert^2\right]\leq
C\sigma^{\frac{4\alpha_1}{2\alpha_1+1}}K_1^{\frac{2}{2\alpha_1+1}},
\end{equation}
where $C$ is a constant depending only on $\alpha_1$, $c$, and $\rho_0$. If, in addition, Assumption \ref{HS} holds, then the dependence of $C$ on $\rho_0$ disappears and we obtain the same constant as for the corresponding estimator with $V_2=0$ in the model $Y=f_1(X_1)+\epsilon$.
\end{corollary}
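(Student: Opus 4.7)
The plan is to apply Corollary \ref{corthm2} with the preliminary choices of $d_1,d_2$ and $\dim W_1$ spelled out in the paragraph preceding the statement, and then to show that every term in the resulting risk bound except the univariate bias–variance sum is negligible relative to $n^{-2\alpha_1/(2\alpha_1+1)}$ as $n\to\infty$.

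First I would verify the abstract hypotheses needed to invoke Corollary \ref{corthm2}: Assumption \ref{compass} is built into Assumption \ref{s1dim}, Assumption \ref{angle} is given, Assumption \ref{assinfty} with $\varphi^{2}\leq 2/(c(1-\rho_{0}))$ is recorded in \eqref{eq:varphie} via Remark \ref{assinftyrem}, and \eqref{eq:statmodass} is enforced by \eqref{eq:choices1} for large $n$. I would then estimate each term in the bound. Using \eqref{eq:approx1} with $V_1$ replaced by $W_1$ and $d_1$ replaced by $\dim W_1\asymp (K_1^{2}n/\sigma^{2})^{1/(2\alpha_1+1)}$, a direct calculation yields
\[\|f_1-\Pi_{W_1}f_1\|^{2}+\frac{\sigma^{2}\dim W_1}{n}\leq C(\alpha_1,c)\,\sigma^{\frac{4\alpha_1}{2\alpha_1+1}}K_1^{\frac{2}{2\alpha_1+1}}\,n^{-\frac{2\alpha_1}{2\alpha_1+1}},\]
and the prefactor $(1+C/\log n)$ tends to $1$. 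This is precisely the univariate oracle rate with a constant free of $\rho_{0}$.

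Next I would dispose of the remaining (secondary) terms. The bias $\|f_1-\Pi_{V_1}f_1\|^{2}\leq C_1K_1^{2}d_1^{-2\alpha_1}$ has order $n^{-2\alpha_1}(\log n)^{8\alpha_1}$ and is $o(n^{-2\alpha_1/(2\alpha_1+1)})$; the analogous bound for $f_2$ gives order $n^{-2\alpha_2}(\log n)^{8\alpha_2}$, which is $o(n^{-2\alpha_1/(2\alpha_1+1)})$ exactly under the strict inequality \eqref{eq:jkdh}; the truncation remainder $\|f_1\|^{2}n^{-(\kappa/2)\log n+1}$ is superpolynomially small after bounding $\|f_1\|^{2}\leq K_1^{2}/c$. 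For the Hilbert–Schmidt term, in the general case I would use the $\rho_{0}^{2}\dim W_1$ variant of Corollary \ref{corthm2}, which contributes $C'\sigma^{2}\rho_{0}^{2}\dim W_1/n$ of the same order as the main term and accounts for the $\rho_{0}$-dependence in the final constant $C(\alpha_1,c,\rho_{0})$. Under the additional Assumption \ref{HS}, the same term becomes $C'\sigma^{2}\|\Pi_{H_2}|_{H_1}\|_{HS}^{2}/n=O(1/n)$ and vanishes in the rescaled limsup, so only the main term survives and the constant reduces to the univariate oracle one, independent of $\rho_{0}$.

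The main obstacle is the tightness of the threshold \eqref{eq:jkdh}: $d_2$ must be chosen close to $n$ so that the approximation error for $f_2$ falls below the target rate $n^{-2\alpha_1/(2\alpha_1+1)}$, yet small enough that \eqref{eq:statmodass} still holds; the $\log^{4}n$ slack in \eqref{eq:choices1} produces the factor $(\log n)^{8\alpha_2}$ in the $f_2$-bias, which is absorbed precisely by the strict inequality in \eqref{eq:jkdh}. The remaining subtlety is to check that the prefactor $(1+C/\log n)$ on the leading term and the $(1-\rho_{0}^{2})^{-1}$ factors attached to the vanishing secondary terms cooperate so that, under Assumption \ref{HS}, the limsup is exactly the univariate oracle constant.
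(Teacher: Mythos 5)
Your proposal is correct and follows essentially the same route as the paper: it applies Corollary \ref{corthm2} with the choices of $V_1$, $V_2$, $W_1$, $\varphi$ from \eqref{eq:varphie}--\eqref{eq:choices1}, checks that the tuned $\dim W_1$ gives the univariate oracle term, and shows all remaining terms are $o(n^{-2\alpha_1/(2\alpha_1+1)})$ under \eqref{eq:jkdh}. Your explicit treatment of the two cases for the Hilbert--Schmidt term (the $\rho_0^2\dim W_1$ variant in general, $\|\Pi_{H_2}|_{H_1}\|_{HS}^2/n=O(1/n)$ under Assumption \ref{HS}) is exactly the mechanism behind the paper's statement about the $\rho_0$-dependence of the constant.
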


\begin{remark}
Corollary \ref{cor11dim} says that the estimator $\hat{f}_1^*$ attains the optimal rate of convergence in a minimax sense. Note that one can also take the supremum over random variables $X$ such that $c$, $1/c$, and $1/(1-\rho_0^2)$ are bounded by a fixed constant.
\end{remark}
\begin{remark}\label{relf}
The assumptions of Corollary \ref{cor11dim} are weaker than those needed in \cite{HKM}, where it is also assumed that the joint density of $(X_1,X_2)$ is bounded away from zero and infinity and that the functions $f_1$ and $f_2$ are Lipschitz continuous. Note that \eqref{eq:jkdh} is always satisfied if, e.g., $\alpha_2\geq 1/2$ and that the boundedness conditions imply Assumption \ref{HSce} and also Lemma \ref{angleequiv} (ii). Hence, the boundedness conditions imply Assumption \ref{angle} and \ref{HS}.
\end{remark}
\begin{remark}
In semiparametric models, one often requires a global rate of convergence of order $o(n^{-1/4})$ in order to obtain the rate of convergence $n^{-1/2}$ for the parametric component (see, e.g., the book by van de Geer \cite[Chapter 11]{vdG}). Since $\alpha_1/(2\alpha_1+1)$ goes to $1/2$ as $\alpha_1\rightarrow\infty$, condition \eqref{eq:jkdh} extends this result to the nonparametric case.
\end{remark}
\subsection{The multidimensional case}
Now, we suppose that the $X_1$ and $X_2$ take values in $[0,1]^{q_1}$ and $[0,1]^{q_2}$, respectively, and that Assumptions \ref{compass} and \ref{angle} are again satisfied with
\[H_1=\lbrace g_1\in L^2(\mathbb{P}^{X_1})|\mathbb{E}\left[ g_1(X_1)\right]=0 \rbrace\] and \[H_2=L^2(\mathbb{P}^{X_{2}}).\]
 In this case, we consider the tensor product Fourier basis:
\[\phi_{k}(x_j)=\prod_{l=1}^{q_j}\phi_{k_l}(x_{jl}),\]
where $k\in \mathbb{Z}^{q_j}$ and $x_j\in [0,1]^{q_j}$. We define the following Sobolev class
\[\tilde{W}_j(\alpha_j,K_j)=\left\{\sum_{k\in \mathbb{Z}^{q_j}}^\infty \theta_k\phi_k(x_j)\ :\ \sum_{k\in \mathbb{Z}^{q_j}}^\infty a_{jk}^2\theta_{k}^2\leq K_j^2\right\}\]
with $a_{jk}=\|k\|^{\alpha_j}_\infty$, where $\|k\|_\infty=\max_{l=1,\dots,q_j}|k_l|$, $\alpha_j>0$, and $K_j> 0$ (an alternative choice would be, e.g., $a_{jk}^2=\sum_{l=1}^{q_j}|k_l|^{\alpha_j}$).
\begin{assumption}\label{smdim}
Suppose that the $X_j$ have densities $p_{X_j}$ with respect to the Lebesgue measure on $[0,1]^{q_j}$, which satisfy $c\leq p_{X_j}\leq 1/c$ for some constant $c>0$. Moreover, suppose that the $f_j$ belong to the Sobolev classes $\tilde{W}_j(\alpha_j,K_j)$,
where $\alpha_j>0$ and $K_j> 0$.
\end{assumption}
For $j=1,2$, let $V_j$ be the intersection of $H_j$ with the linear span of the $\phi_k$ (in the variable $x_j$) such that $\|k\|_\infty\leq m_j$, and let $W_1$ be the intersection of $H_1$ with the linear span of the $\phi_k$ (in the variable $x_1$) such that $\|k\|_\infty\leq m_{W_1}$. Note that we have $d_1=(2m_1+1)^{q_1}-1$ and $d_2=(2m_2+1)^{q_2}$. Similarly as above, one can show that
\begin{equation*}
\|h_j-\Pi_{V_j}h_j\|^2\leq C_jK_j^2d_j^{-2\alpha_j/q_j}
\end{equation*}
for all $h_j\in\tilde{W}_j(\alpha_j,K_j)\cap H_j$, where $C_j=2^{2\alpha_j}/c$, and that Assumption \ref{assinfty} holds with a $\varphi$ satisfying again \eqref{eq:varphie}. We now choose $d_1$ and $d_2$ as in \eqref{eq:choices1},
and we choose $\dim W_1$ (up to constant) equal to
\[\left(\frac{K_1^2n}{\sigma^2}\right)^{\frac{q_1}{2\alpha_1+q_1}}.\] 
Similarly as above, we conclude:
\begin{corollary} Let Assumption \ref{angle} and \ref{smdim} be satisfied. Suppose that 
\[
\alpha_2/q_2>\alpha_1/(2\alpha_1+q_1).
\] 
Then
\begin{equation*}
\limsup_{n\rightarrow \infty}\sup_{f_j\in \tilde{W}_j(\alpha_j,K_j)}\mathbb{E}\left[n^{\frac{2\alpha_1}{2\alpha_1+q_1}}\Vert f_1-\hat{f}_1^*\Vert^2\right]\leq
C\sigma^{\frac{4\alpha_1}{2\alpha_1+q_1}}K_1^{\frac{2q_1}{2\alpha_1+q_1}},
\end{equation*}
where $C$ is a constant depending only on $\alpha_1$, $c$, and $\rho_0$. If, in addition, Assumption \ref{HS} holds, then the dependence of $C$ on $\rho_0$ disappears and we obtain the same constant as for the corresponding estimator with $V_2=0$ in the model $Y=f_1(X_1)+\epsilon$.
\end{corollary}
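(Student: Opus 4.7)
The plan is to reduce the statement to Corollary \ref{corthm2} applied with the tensor-product trigonometric subspaces $V_1$, $V_2$, $W_1$ defined just before the statement. The work splits into (i) verifying the hypotheses of that corollary, (ii) identifying the bias--variance contribution on $W_1$ as the dominant term matching the claimed rate and constant, and (iii) showing that every other term is $o(n^{-2\alpha_1/(2\alpha_1+q_1)})$ uniformly over the Sobolev class.

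For (i), Assumption \ref{compass} is built into \ref{smdim} and \ref{angle} is given. Assumption \ref{assinfty} follows from the Cauchy--Schwarz comparison $\|g_j\|_\infty \leq \sqrt{1/c}\,\sqrt{d_j+1}\|g_j\|$ for tensor-product trigonometric polynomials combined with Remark \ref{assinftyrem}, yielding $\varphi^2\leq 2/(c(1-\rho_0))$ as in \eqref{eq:varphie}. Assumption \ref{HSce} is not automatic from \ref{smdim} (only the marginals are controlled), so we appeal to the last clause of Corollary \ref{corthm2}, which allows $\|\Pi_{V_2}|_{W_1}\|^2_{HS}$ to be replaced by $\rho_0^2\dim W_1$. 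The dimension constraint \eqref{eq:statmodass} holds for all sufficiently large $n$ by the definition \eqref{eq:choices1}.

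For (ii)--(iii), the Sobolev approximation bound $\|h_j-\Pi_{V_j}h_j\|^2\leq C_j K_j^2 d_j^{-2\alpha_j/q_j}$ follows from the definition of $\tilde W_j(\alpha_j,K_j)$ and the lower bound on $p_{X_j}$, with the analogous bound for $W_1$. With $\dim W_1\asymp(K_1^2 n/\sigma^2)^{q_1/(2\alpha_1+q_1)}$, the bias $\|f_1-\Pi_{W_1}f_1\|^2$ and variance $\sigma^2\dim W_1/n$ balance to $n^{-2\alpha_1/(2\alpha_1+q_1)}$ times the oracle-style constant $C\sigma^{4\alpha_1/(2\alpha_1+q_1)}K_1^{2q_1/(2\alpha_1+q_1)}$ (depending on $\alpha_1$, $c$, $q_1$), and the multiplicative factor $1+C'/\log n$ in Corollary \ref{corthm2} tends to $1$. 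The remaining terms are disposed of as follows: $\|f_1-\Pi_{V_1}f_1\|^2\leq\|f_1-\Pi_{W_1}f_1\|^2$ since $W_1\subseteq V_1$; the nuisance bias satisfies
\[
\|f_2-\Pi_{V_2}f_2\|^2\leq C_2 K_2^2 d_2^{-2\alpha_2/q_2}=O\bigl((\varphi^2\log^4 n/n)^{2\alpha_2/q_2}\bigr),
\]
which is $o(n^{-2\alpha_1/(2\alpha_1+q_1)})$ \emph{precisely} under the standing hypothesis $\alpha_2/q_2>\alpha_1/(2\alpha_1+q_1)$; the fallback Hilbert--Schmidt contribution $\sigma^2\rho_0^2\dim W_1/n$ has the same order as the variance and is therefore absorbed into the limiting constant (which is what forces the dependence on $\rho_0$); and the final $\|f_1\|^2 n^{-\kappa(\log n)/2+1}$ term is negligible because $\|f_1\|^2\leq K_1^2/c$ uniformly in the class.

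For the sharpening under Assumption \ref{HS}, we replace $\rho_0^2\dim W_1$ by $\|\Pi_{H_2}|_{H_1}\|^2_{HS}$, a finite $n$-independent constant, so this contribution now decays at rate $1/n$ and is negligible. The key structural point is that the leading bracket $\|f_1-\Pi_{W_1}f_1\|^2+\sigma^2\dim W_1/n$ in Corollary \ref{corthm2} is \emph{not} premultiplied by $1/(1-\rho_0^2)$; passing to $\limsup_n$ therefore recovers exactly the oracle constant with no residual $\rho_0$-dependence. The main obstacle throughout is the bookkeeping: one has to carry the $\rho_0$-dependent factor $\varphi^2\asymp(1-\rho_0)^{-1}$ from \eqref{eq:choices1} through the nuisance-bias estimate and verify that the \emph{strict} inequality $\alpha_2/q_2>\alpha_1/(2\alpha_1+q_1)$ is exactly what is needed to push that bias below the target rate uniformly in the class.
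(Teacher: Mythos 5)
Your overall route coincides with the paper's: reduce to Corollary \ref{corthm2} with the tensor-product trigonometric choices of $V_1$, $V_2$, $W_1$, verify Assumption \ref{assinfty} via Remark \ref{assinftyrem} with $\varphi^2\leq 2/(c(1-\rho_0))$, take $d_1,d_2$ as in \eqref{eq:choices1} and $\dim W_1\asymp(K_1^2n/\sigma^2)^{q_1/(2\alpha_1+q_1)}$, use the fallback $\rho_0^2\dim W_1$ for the Hilbert--Schmidt term in the first assertion and Assumption \ref{HS} in the second. There is, however, one genuine gap: you dispose of $\Vert f_1-\Pi_{V_1}f_1\Vert^2$ by the monotonicity bound $\Vert f_1-\Pi_{V_1}f_1\Vert^2\leq\Vert f_1-\Pi_{W_1}f_1\Vert^2$. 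In Corollary \ref{corthm2} this term is multiplied by $C'=C/(1-\rho_0^2)$, not by the vanishing factor $1+C'/\log n$, so your bound leaves a contribution $C'\Vert f_1-\Pi_{W_1}f_1\Vert^2$ of exactly the leading order $n^{-2\alpha_1/(2\alpha_1+q_1)}$ with a $\rho_0$-dependent constant. This contradicts your own step (iii) that all remaining terms are $o\bigl(n^{-2\alpha_1/(2\alpha_1+q_1)}\bigr)$, and it breaks the second assertion of the corollary: under Assumption \ref{HS} you would still carry a $(1-\rho_0^2)^{-1}$-weighted term of leading order, so the limit constant would be neither $\rho_0$-free nor equal to the oracle constant. (The first assertion, where $C$ may depend on $\rho_0$, does survive your bound.)

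The fix is already contained in your setup and is precisely why the paper takes the preliminary space $V_1$ much larger than $W_1$: with $d_1\asymp n/(\varphi^2\log^4 n)$ from \eqref{eq:choices1}, the Sobolev approximation bound gives $\Vert f_1-\Pi_{V_1}f_1\Vert^2\leq C_1K_1^2 d_1^{-2\alpha_1/q_1}$, and since $2\alpha_1/q_1>2\alpha_1/(2\alpha_1+q_1)$ this is $o\bigl(n^{-2\alpha_1/(2\alpha_1+q_1)}\bigr)$ uniformly over $\tilde{W}_1(\alpha_1,K_1)$, even after multiplication by $C'$. With that correction, and keeping your other disposals (the nuisance bias under the strict inequality $\alpha_2/q_2>\alpha_1/(2\alpha_1+q_1)$, the $O(1/n)$ Hilbert--Schmidt term under Assumption \ref{HS}, the uniform bound $\Vert f_1\Vert^2\leq K_1^2/c$ for the truncation remainder), both assertions follow exactly as in the paper's argument for Corollary \ref{cor11dim}, transcribed to the multidimensional exponents.
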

\subsection{The additive model with Sobolev smoothness}\label{amsc}
In this section, we want to discuss Theorem \ref{thm1} and \ref{thm2} in the case that the random variables $X_1$ and $X_2$ take values in $\mathbb{R}$ and $\mathbb{R}^{q-1}$, $q\geq 2$, respectively, and that Assumptions \ref{compass} and \ref{angle} are satisfied with 
\[
H_1=\lbrace g_1\in L^2(\mathbb{P}^{X_1})|\mathbb{E}\left[ g_1(X_1)\right]=0 \rbrace
\] 
and
\[H_2=\sum_{j=1}^{q-1}L^2(\mathbb{P}^{X_{2j}}),\]
where the $X_{2j}$, $j=1,\dots,q-1$, are the components of $X_2$. If we define $H_{2j}=\lbrace g_{2j}\in L^2(\mathbb{P}^{X_{2j}})|\mathbb{E}\left[ g_{2j}(X_{2j})\right]=0 \rbrace$, if $j=1,\dots,q-2$, and $H_{2j}=L^2(\mathbb{P}^{X_{2j}})$, if $j=q-1$, then we can write $H_2=\sum_{j=1}^{q-1}H_{2j}$.
\begin{assumption}\label{hassa}
Suppose that $X_1$ and the $X_{2j}$ take values in $[0,1]$ and have densities $p_{X_1}$ and $p_{X_{2j}}$ with respect to the Lebesgue measure on $[0,1]$, which satisfy $c\leq p_{X_1}\leq 1/c$ and $c\leq p_{X_{2j}}\leq 1/c$ for some constant $c>0$.

Moreover, suppose that $f_1\in\tilde{W}_1(\alpha_1,K_1)$, where $\alpha_1>0$ and $K_1> 0$, and that there is a decomposition $f_2=\sum_{j=1}^{q-1}f_{2j}$ such that $f_{2j}\in\tilde{W}_{2j}(\alpha_2,K_2)\cap H_{2j}$, where $\alpha_2>0$ and $K_2> 0$. 
\end{assumption}
Now, let $V_1$ and $W_1$ be as in Section \ref{a1dim}, and let $V_2=\sum_{j=1}^{q-1}V_{2j}$, where $V_{2j}$ is the intersection of $H_{2j}$ with the linear span of the $\phi_k$ (in the variable $x_{2j}$) such that $|k|\leq m_2$. 
In order to show that Assumption \ref{assinfty} is satisfied, we we need the following additional assumption on $H_2$.
\begin{assumption}\label{ac} There is an
$\epsilon_2<1$ such that for each $h_2\in H_2$, there is a decomposition $h_2=\sum_{j=1}^{q-1}h_{2j}$ with $h_{2j}\in H_{2j}$ such that
\begin{equation}\label{eq:kobcond}
 \Vert h_2\Vert^2\geq(1-\epsilon_2)\sum_{j=1}^{q-1}\|h_{2j}\|^2.
\end{equation}
\end{assumption}
By applying iteratively \cite[Proposition 2.A in Appendix A.4]{BKRW}, one can show that Assumption \ref{ac} is equivalent to the assertion that $\sum_{j\in J}H_{2j}$ is closed for all $J\subseteq \{1,\dots,q-1\}$. In particular, Assumption \ref{ac} implies that $H_2$ is closed meaning that Assumption \ref{compass} is included in Assumption \ref{ac}. We mention that Assumption \ref{ac} can also be related to bounds on certain complementary angles (see \cite[Definition 2 and Proposition 2.D in Appendix A.4]{BKRW}). 
\begin{lemma}\label{varphis} Let Assumption \ref{angle}, \ref{hassa}, and \ref{ac} be satisfied. Then Assumption \ref{assinfty} holds with a constant $\varphi$ satisfying
\[\varphi^2\leq\frac{2}{c(1-\rho_0)(1-\epsilon_2)}\frac{d_1+\sum_{j=1}^{q-1} d_{2j}}{d},\]
where $d_{2j}=\dim V_{2j}$. In particular, if the $V_{2j}$ are linearly independent, then we have
\begin{equation}\label{eq:sdrt}
\varphi^2\leq\frac{2}{c(1-\rho_0)(1-\epsilon_2)}.
\end{equation} 
\end{lemma}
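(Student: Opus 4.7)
The plan is to combine pointwise-to-$L^2$ bounds on $V_1$ and on $V_2$ in the style of Remark~\ref{assinftyrem}, using Lemma~\ref{angleequiv} to pass from $V_1+V_2$ back to $V$. The twist is that the $V_2$-bound will naturally be expressed in terms of $\sum_{j=1}^{q-1} d_{2j}$ rather than $d_2$, which is what produces the factor $(d_1+\sum_j d_{2j})/d$ in the statement; when the $V_{2j}$ are linearly independent these two quantities coincide, $d_1+\sum_j d_{2j}=d$, and the cleaner form \eqref{eq:sdrt} drops out.

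For the $V_1$-bound I repeat the derivation of \eqref{eq:varphie}: the identity $\sum_{|k|\leq m_1}\phi_k(x)^2=2m_1+1$ together with Parseval and $p_{X_1}\geq c$ gives $\|g_1\|_\infty\leq\sqrt{2/c}\,\sqrt{d_1}\,\|g_1\|$ (using $d_1=2m_1$). For the $V_2$-bound, I fix any decomposition $g_2=\sum_{j=1}^{q-1}g_{2j}$ with $g_{2j}\in V_{2j}$; such a decomposition exists since $V_2=\sum_j V_{2j}$, though it need not be unique. Each $g_{2j}$ is a univariate trigonometric polynomial of degree at most $m_2$, so the same Parseval argument applied to the marginal density of $X_{2j}$ yields $\|g_{2j}\|_\infty\leq\sqrt{2/c}\,\sqrt{d_{2j}}\,\|g_{2j}\|$ (using $2m_2+1\leq 2d_{2j}$). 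The triangle inequality and Cauchy--Schwarz in $j$ then give
\[
\|g_2\|_\infty\leq\sum_{j=1}^{q-1}\|g_{2j}\|_\infty\leq\sqrt{2/c}\,\Bigl(\sum_{j=1}^{q-1} d_{2j}\Bigr)^{1/2}\Bigl(\sum_{j=1}^{q-1}\|g_{2j}\|^2\Bigr)^{1/2}.
\]
Applying Assumption~\ref{ac} to this specific decomposition (read in its uniform Kober form, as suggested by the label \eqref{eq:kobcond} and the equivalence to iterated closedness noted in the text) dominates $\sum_j\|g_{2j}\|^2$ by $\|g_2\|^2/(1-\epsilon_2)$, so that $\|g_2\|_\infty\leq\sqrt{2/(c(1-\epsilon_2))}\,(\sum_j d_{2j})^{1/2}\|g_2\|$.

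Putting the pieces together: for $g=g_1+g_2\in V$ with $g_1\in V_1,\,g_2\in V_2$, the triangle inequality and a second Cauchy--Schwarz give
\[
\|g\|_\infty\leq\sqrt{\tfrac{2}{c(1-\epsilon_2)}}\,\Bigl(d_1+\sum_{j=1}^{q-1}d_{2j}\Bigr)^{1/2}\bigl(\|g_1\|^2+\|g_2\|^2\bigr)^{1/2},
\]
and Lemma~\ref{angleequiv}(ii) with $\varrho=\rho_0$ dominates the last factor by $\|g\|/\sqrt{1-\rho_0}$. Rewriting the result as $\varphi\sqrt{d}\,\|g\|$ yields the stated bound on $\varphi^2$. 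The main subtlety is the dimension bookkeeping: Cauchy--Schwarz across $j$ produces the sum $\sum_j d_{2j}$ with no automatic saving when the $V_{2j}$ overlap, and the Kober-type inequality \eqref{eq:kobcond} is what converts the resulting $\ell^2$-sum $\sum_j\|g_{2j}\|^2$ back into a single $\|g_2\|^2$.
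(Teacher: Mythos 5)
Your overall route is the same as the paper's (Appendix \ref{app212}): per-component sup-norm bounds $\|g_1\|_\infty\leq\sqrt{2/c}\,\sqrt{d_1}\|g_1\|$ and $\|g_{2j}\|_\infty\leq\sqrt{2/c}\,\sqrt{d_{2j}}\|g_{2j}\|$, a triangle inequality plus Cauchy--Schwarz across components to get $\sqrt{d_1+\sum_j d_{2j}}$, Lemma \ref{angleequiv}(ii) to absorb $\|g_1\|^2+\|g_2\|^2$ into $\|g_1+g_2\|^2/(1-\rho_0)$, and the observation that linear independence of the $V_{2j}$ gives $d_1+\sum_j d_{2j}=d$. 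However, there is one genuine gap: you fix an \emph{arbitrary} decomposition $g_2=\sum_j g_{2j}$ and then invoke Assumption \ref{ac} ``in its uniform Kober form'' to bound $\sum_j\|g_{2j}\|^2\leq\|g_2\|^2/(1-\epsilon_2)$. Assumption \ref{ac} is an existence statement: for each $h_2\in H_2$ there is \emph{some} decomposition satisfying \eqref{eq:kobcond}; it does not assert the inequality for every decomposition, and the uniform version neither follows from the assumption nor from its equivalence with closedness of the partial sums $\sum_{j\in J}H_{2j}$. Indeed, if the sum of the $H_{2j}$ is not direct (e.g.\ two covariates coincide, so two of the spaces overlap), then for a fixed $g_2$ one can add and subtract a large common element to make $\sum_j\|g_{2j}\|^2$ arbitrarily big while Assumption \ref{ac} still holds via the good decomposition; your claimed bound then fails for the decomposition you fixed.

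The repair is exactly the paper's move: do not take an arbitrary decomposition, but take the decomposition of $g_2$ whose existence Assumption \ref{ac} guarantees, i.e.\ the one satisfying \eqref{eq:kobcond}, and run your Cauchy--Schwarz argument on that one. (A remaining fine point, which the paper also passes over silently, is that the components of this chosen decomposition must lie in the $V_{2j}$ rather than merely in the $H_{2j}$ in order to apply the dimension-dependent sup-norm bounds; one reads \eqref{eq:kobcond} as being available for the chosen finite-dimensional decomposition.) With that single change, your computation of the constant, $\varphi^2\leq\frac{2}{c(1-\rho_0)(1-\epsilon_2)}\frac{d_1+\sum_j d_{2j}}{d}$, and the specialization \eqref{eq:sdrt} under linear independence, agree with the paper's proof.
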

A proof of Lemma \ref{varphis} is given in Appendix \ref{app212}.
Thus \eqref{eq:statmodass} of Corollary \ref{corthm1} is satisfied if 
\[\frac{2\left(d_1+\sum_{j=1}^{q-1}d_{2j}\right)}{c(1-\rho_0)(1-\epsilon_2)}\leq \frac{n}{\log^4 n}.\]
We now choose $\dim W_1$ as in Section \ref{a1dim}, and we choose $d_1$ and the $d_{2j}$ equal to the smallest possible integers satisfying 
\[d_1\geq \frac{c(1-\rho_0)(1-\epsilon_2)n}{8\log^4 n}\]
and
\begin{equation}\label{eq:choicess}
 d_{2j}\geq \frac{c(1-\rho_0)(1-\epsilon_2)n}{8(q-1)\log^4 n}.
\end{equation} 
If the right hand side of \eqref{eq:choicess} is greater than or equal to $2$, then \eqref{eq:statmodass} of Corollary \ref{corthm1} is satisfied. 
In order to bound the approximation error $\Vert f_2-\Pi_{V_2}f_2\Vert$, we introduce $\epsilon_2'$ which is the smallest number such that
\begin{equation}\label{eq:ric3}
\|h_2\|^2\leq(1+\epsilon_2')\sum_{j=1}^{q-1}\left\|h_{2j}\right\|^2
\end{equation}
for all $h_2=\sum_{j=1}^{q-1}h_{2j}$ with $h_{2j}\in H_{2j}$. Note that 
\[1+\epsilon_2'\leq q-1,\]
by the Cauchy-Schwarz inequality.
Using the decomposition of $f_2$ in Assumption \ref{hassa}, the projection theorem, \eqref{eq:ric3}, and finally \eqref{eq:approx1} and \eqref{eq:choicess}, we have
\begin{align*}
\|f_2-\Pi_{V_2}f_2\|^2&\leq \bigg\|\sum_{j=1}^{q-1}(f_{2j}-\Pi_{V_{2j}}f_{2j})\bigg\|^2\\
&\leq (1+\epsilon_2')\sum_{j=1}^{q-1}\|f_{2j}-\Pi_{V_{2j}}f_{2j}\|^2\\
&\leq (1+\epsilon_2')C_2K_2^2(q-1)\left(\frac{c(1-\rho_0)(1-\epsilon_2)n}{8(q-1)\log^4 n}\right)^{-2\alpha_2}.
\end{align*}
From Corollary \ref{corthm2}, we now obtain:
\begin{thm}\label{addmcors} Let Assumption \ref{angle}, \ref{hassa}, and \ref{ac} be satisfied. Suppose that the right hand side of \eqref{eq:choicess} is greater than or equal to $2$ and that $\dim W_1\leq d_1$.
Then
\begin{multline}\label{eq:kfori}
\sup_{f_1\in \tilde{W}_1(\alpha_1,K_1)}\sup_{f_{2j}\in \tilde{W}_{2j}(\alpha_2,K_2)}\mathbb{E}\left[\Vert f_1-\hat{f}_1^*\Vert^2\right]\leq
C\sigma^{\frac{4\alpha_1}{2\alpha_1+1}}K_1^{\frac{2}{2\alpha_1+1}}n^{-\frac{2\alpha_1}{2\alpha_1+1}}\\
+C'(1+\epsilon_2')q^{2\alpha_2+1}(\log n)^{8\alpha_2}((1-\epsilon_2)n)^{-2\alpha_2},
\end{multline}
where $C$ is a constant depending only on $\alpha_1$, $c$ and $\rho_0$, and $C'$ is a constant depending only on $\alpha_2$, $c$, $\rho_0$, $K_2$, and $K_1$.
\end{thm}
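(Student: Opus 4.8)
\emph{Proof sketch (plan).} The plan is to apply Corollary \ref{corthm2}, in its general form in which $\|K\|_{HS}^2$ is replaced by $\rho_0^2\dim W_1$ (since the joint law of $(X_1,X_2)$ is not constrained by Assumption \ref{hassa}, so Assumptions \ref{HSce} and \ref{HS} need not hold here), to the spaces $V_1$, $W_1$, and $V_2=\sum_{j=1}^{q-1}V_{2j}$ with the choices of $\dim W_1$, $d_1$, and the $d_{2j}$ fixed above. First I would verify the hypotheses of that corollary: Assumption \ref{compass} is contained in Assumption \ref{ac}; Assumption \ref{angle} is assumed; and Assumption \ref{assinfty} follows from Lemma \ref{varphis} in its general form (not assuming the $V_{2j}$ linearly independent), which gives $\varphi^2 d\leq \frac{2}{c(1-\rho_0)(1-\epsilon_2)}\big(d_1+\sum_{j=1}^{q-1}d_{2j}\big)$. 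The condition \eqref{eq:statmodass} then has to be checked: because the right-hand side of \eqref{eq:choicess} is assumed to be $\geq 2$, the corresponding lower bound for $d_1$ is $\geq 2(q-1)\geq 2$, so the ``smallest integer'' choices obey $d_1\leq \frac{c(1-\rho_0)(1-\epsilon_2)n}{4\log^4 n}$ and $d_{2j}\leq \frac{c(1-\rho_0)(1-\epsilon_2)n}{4(q-1)\log^4 n}$; summing yields $d_1+\sum_j d_{2j}\leq \frac{c(1-\rho_0)(1-\epsilon_2)n}{2\log^4 n}$ and hence $\varphi^2 d\leq n/\log^4 n$, which is \eqref{eq:statmodass}.

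Next I would bound the individual terms appearing on the right-hand side of Corollary \ref{corthm2}. For the oracle part, \eqref{eq:approx1} applied with $W_1$ in place of $V_1$ gives $\|f_1-\Pi_{W_1}f_1\|^2\leq C_1K_1^2(\dim W_1)^{-2\alpha_1}$, and with $\dim W_1\asymp (K_1^2n/\sigma^2)^{1/(2\alpha_1+1)}$ both this quantity and $\sigma^2\dim W_1/n$ are of order $\sigma^{4\alpha_1/(2\alpha_1+1)}K_1^{2/(2\alpha_1+1)}n^{-2\alpha_1/(2\alpha_1+1)}$, while the factor $(1+C'/\log n)$ is bounded. The remaining terms are of lower order: $\|f_1-\Pi_{V_1}f_1\|^2\leq C_1K_1^2 d_1^{-2\alpha_1}$ is negligible since $d_1\asymp n/\log^4 n$; $\sigma^2\rho_0^2\dim W_1/n\leq \sigma^2\dim W_1/n$ is of the oracle order; and $\|f_1\|^2 n^{-(\kappa/2)\log n+1}$ is superpolynomially small, with $\|f_1\|^2\leq K_1^2/c$ as in Section \ref{a1dim}. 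Finally, the computation recorded just before the theorem gives $\|f_2-\Pi_{V_2}f_2\|^2\leq (1+\epsilon_2')C_2K_2^2(q-1)\big(\tfrac{c(1-\rho_0)(1-\epsilon_2)n}{8(q-1)\log^4 n}\big)^{-2\alpha_2}$, and pulling out the constants depending on $\alpha_2$, $c$, $\rho_0$ and the factor $K_2^2$ bounds this by $C'(1+\epsilon_2')q^{2\alpha_2+1}(\log n)^{8\alpha_2}((1-\epsilon_2)n)^{-2\alpha_2}$.

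Collecting these estimates, and absorbing the lower-order terms into the two displayed terms (using $(1+C'/\log n)\leq 2$ and that $n$ is bounded below, since the right-hand side of \eqref{eq:choicess} is $\geq 2$), I would obtain \eqref{eq:kfori} with $C$ depending only on $\alpha_1$, $c$, $\rho_0$ and $C'$ depending only on $\alpha_2$, $c$, $\rho_0$, $K_2$, $K_1$. The hypothesis $\dim W_1\leq d_1$ enters only to ensure $W_1\subseteq V_1$, so that $\hat f_1^*$ is exactly the estimator analyzed in Corollary \ref{corthm2}.

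I do not expect a genuinely hard step here: the argument is essentially bookkeeping once Corollary \ref{corthm2} is in hand. The one place needing care is the verification of \eqref{eq:statmodass} from the ``smallest integer'' choices — this succeeds only because the assumption that the right-hand side of \eqref{eq:choicess} is $\geq 2$ makes the rounding-up harmless, and because Lemma \ref{varphis} is invoked in its general form (we do not assume the $V_{2j}$ to be linearly independent, so the factor $(d_1+\sum_j d_{2j})/d$ cannot be dropped).
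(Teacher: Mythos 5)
Your proposal is correct and follows essentially the same route as the paper: the paper's proof of Theorem \ref{addmcors} is precisely the application of Corollary \ref{corthm2} (in its general form with $\Vert K\Vert_{HS}^2$ replaced by $\rho_0^2\dim W_1$) with Lemma \ref{varphis} supplying Assumption \ref{assinfty}, the rounding argument for \eqref{eq:statmodass}, and the displayed bound on $\Vert f_2-\Pi_{V_2}f_2\Vert^2$. One small tightening: justify the negligibility of $\Vert f_1-\Pi_{V_1}f_1\Vert^2$ via $W_1\subseteq V_1$ (so $\Vert f_1-\Pi_{V_1}f_1\Vert\leq\Vert f_1-\Pi_{W_1}f_1\Vert$), rather than via $d_1\asymp n/\log^4 n$, since the implied constant in the latter involves $(1-\epsilon_2)$ and would otherwise leak into $C$, which is claimed to depend only on $\alpha_1$, $c$, and $\rho_0$.
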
 
\begin{remark}
If the last expression in \eqref{eq:kfori} is of smaller order, then Theorem \ref{addmcors} says that the estimator $\hat{f}_1^*$ attains the (nonasymptotic) optimal rate of convergence in a minimax sense. The last expression in \eqref{eq:kfori} is of smaller order if, e.g.,
\begin{equation}\label{eq:dkit}
q^{2\alpha_2+1}(\log n)^{8\alpha_2+1}n^{-2\alpha_2}\leq C''n^{-\frac{2\alpha_1}{2\alpha_1+1}},
\end{equation}
where $C''$ depends only on $\alpha_2$, $c$, $\rho_0$, $K_2$, $K_1$, $\epsilon_2$, and $\epsilon_2'$. This result can be applied to an asymptotic scenario in which $q$ and $n$ tend to infinity such that \eqref{eq:dkit} is satisfied.
\end{remark}
Next, we apply Theorem \ref{addmcors} in the case that the sample
size $n$ tends to infinity, and $q$ is a fixed constant.
\begin{corollary}\label{coramod} Let Assumption \ref{angle}, \ref{hassa}, and \ref{ac} be satisfied. Suppose that 
\[
\alpha_2>\alpha_1/(2\alpha_1+1).
\] 
Then
\begin{equation*}
\limsup_{n\rightarrow \infty}\sup_{f_1\in \tilde{W}_1(\alpha_1,K_1)}\sup_{f_{2j}\in \tilde{W}_{2j}(\alpha_2,K_2)}\mathbb{E}\left[n^{\frac{2\alpha_1}{2\alpha_1+1}}\Vert f_1-\hat{f}_1^*\Vert^2\right]\leq
C\sigma^{\frac{4\alpha_1}{2\alpha_1+1}}K_1^{\frac{2}{2\alpha_1+1}},
\end{equation*}
where $C$ is a constant depending only on $\alpha_1$, $c$, and $\rho_0$. If, in addition, Assumption \ref{HS} holds, then the dependence of $C$ on $\rho_0$ disappears and we obtain the same constant as for the corresponding estimator with $V_2=0$ in the model $Y=f_1(X_1)+\epsilon$.
\end{corollary}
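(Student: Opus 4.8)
The plan is to derive the corollary from Theorem \ref{addmcors} (for the general assertion) and from Corollary \ref{corthm2} (for the refinement under Assumption \ref{HS}) by letting $n\to\infty$ with $q$ held fixed. First I would check that the two standing hypotheses of Theorem \ref{addmcors} are met for all sufficiently large $n$: with $q$ fixed, the right-hand side of \eqref{eq:choicess} is of order $n/((q-1)\log^4 n)\to\infty$, so it exceeds $2$ eventually; and $\dim W_1$ is of order $(K_1^2 n/\sigma^2)^{1/(2\alpha_1+1)}$ whereas $d_1$ is of order $n/\log^4 n$, so $\dim W_1\le d_1$ eventually (and likewise \eqref{eq:statmodass} holds eventually). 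Since the claim concerns only $\limsup_{n\to\infty}$, it is enough that these hold for large $n$.

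Next I would multiply the bound \eqref{eq:kfori} by $n^{2\alpha_1/(2\alpha_1+1)}$. The first summand becomes exactly $C\sigma^{4\alpha_1/(2\alpha_1+1)}K_1^{2/(2\alpha_1+1)}$, which is the desired quantity. In the second summand, $q$, $\epsilon_2$ and $\epsilon_2'$ are now fixed constants, so after rescaling it is of order $(\log n)^{8\alpha_2}\,n^{2\alpha_1/(2\alpha_1+1)-2\alpha_2}$; the hypothesis $\alpha_2>\alpha_1/(2\alpha_1+1)$ makes the exponent of $n$ strictly negative, so this term tends to $0$ (equivalently, \eqref{eq:dkit} holds for large $n$, and one may instead invoke the remark following Theorem \ref{addmcors}). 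Taking $\limsup_{n\to\infty}$ then yields the first assertion, with $C$ depending only on $\alpha_1$, $c$, and $\rho_0$, and the uniformity over $f_1\in\tilde{W}_1(\alpha_1,K_1)$ and $f_{2j}\in\tilde{W}_{2j}(\alpha_2,K_2)$ is inherited from \eqref{eq:kfori}.

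For the refinement I cannot quote Theorem \ref{addmcors} verbatim, since there the constant $C$ already carries a $\rho_0$-dependence: it originates from the term in which, in the general case, $\|K\|_{HS}^2$ is bounded by $\rho_0^2\dim W_1$, so that $C'\,\sigma^2\rho_0^2\dim W_1/n$ survives the rescaling by $n^{2\alpha_1/(2\alpha_1+1)}$. Instead I would retrace the passage from Corollary \ref{corthm2} to the bound \eqref{eq:kfori} under the parameter choices of Section \ref{amsc}, but now using the variant of Corollary \ref{corthm2} in which $\|K\|_{HS}^2$ is replaced by the \emph{finite} quantity $\|\Pi_{H_2}|_{H_1}\|_{HS}^2$ (legitimate under Assumption \ref{HS}). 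After rescaling, the leading term $\big(\|f_1-\Pi_{W_1}f_1\|^2+\sigma^2\dim W_1/n\big)(1+C'/\log n)$ has $\limsup$ bounded by $C_{\alpha_1,c}\,\sigma^{4\alpha_1/(2\alpha_1+1)}K_1^{2/(2\alpha_1+1)}$, using the approximation bound \eqref{eq:approx1} for $W_1$, the choice of $\dim W_1$, and $C'/\log n\to0$; this bound involves no $\rho_0$. Every remaining term in Corollary \ref{corthm2} carries the factor $C'=C/(1-\rho_0^2)$, but each is $o\big(n^{-2\alpha_1/(2\alpha_1+1)}\big)$: $\|f_1-\Pi_{V_1}f_1\|^2=O\big((n/\log^4 n)^{-2\alpha_1}\big)$, $\|f_2-\Pi_{V_2}f_2\|^2=O\big((\log n)^{8\alpha_2}n^{-2\alpha_2}\big)$ with $2\alpha_2>2\alpha_1/(2\alpha_1+1)$, $\sigma^2\|\Pi_{H_2}|_{H_1}\|_{HS}^2/n=O(1/n)$ with $2\alpha_1/(2\alpha_1+1)<1$, and the $R_n$-type term decays faster than any power of $n$. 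Since $C'$ is bounded (albeit $\rho_0$-dependent), multiplying these by $n^{2\alpha_1/(2\alpha_1+1)}$ and letting $n\to\infty$ annihilates them all, so the $\limsup$ is bounded by $C_{\alpha_1,c}\,\sigma^{4\alpha_1/(2\alpha_1+1)}K_1^{2/(2\alpha_1+1)}$, free of $\rho_0$. Finally, this is exactly the $\limsup$ one gets for the truncated least squares estimator on $W_1$ in the model $Y=f_1(X_1)+\epsilon$ (the case $V_2=0$), whose risk equals $\|f_1-\Pi_{W_1}f_1\|^2+\sigma^2\dim W_1/n$ up to a truncation correction that is negligible by the same argument used to bound $R_n$.

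The argument is essentially bookkeeping; there is no genuinely hard step. The only point requiring care is the one underlying the refinement: one must verify that the $\rho_0$-dependent constant $C'$ in Corollary \ref{corthm2} multiplies only quantities that vanish after rescaling by $n^{2\alpha_1/(2\alpha_1+1)}$, which in turn rests on the two exponent inequalities $\alpha_2>\alpha_1/(2\alpha_1+1)$ and $2\alpha_1/(2\alpha_1+1)<1$ overriding the polylogarithmic factors, and on $\|\Pi_{H_2}|_{H_1}\|_{HS}$ being a finite constant that does not grow with $n$.
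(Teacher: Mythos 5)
Your proposal is correct and follows essentially the same route the paper intends: the first assertion is Theorem \ref{addmcors} specialized to fixed $q$ with the second summand of \eqref{eq:kfori} killed after rescaling by $n^{2\alpha_1/(2\alpha_1+1)}$ thanks to $\alpha_2>\alpha_1/(2\alpha_1+1)$, and the refinement comes from rerunning the Section \ref{amsc} bookkeeping through Corollary \ref{corthm2} with $\Vert K\Vert_{HS}^2$ replaced by $\left\|\Pi_{H_2}|_{H_1}\right\|^2_{HS}$, so that every $\rho_0$-dependent term is $o\bigl(n^{-2\alpha_1/(2\alpha_1+1)}\bigr)$. Your observation that one cannot quote Theorem \ref{addmcors} verbatim for the second claim, because its constant already absorbs the $\rho_0^2\dim W_1$ bound on the Hilbert--Schmidt term, is exactly the point the paper leaves implicit.
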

\begin{remark}
Again, the assumptions of Corollary \ref{coramod} are weaker than those needed in \cite{HKM} (compare to Remark \ref{relf}). For instance, the condition that the one- and two-dimensional marginal densities of $X_{2j}$ and $(X_{2j},X_{2j'})$ are bounded away from zero and infinity implies that Assumption \ref{ac} is satisfied (see, e.g., \cite[Proposition 2.C in Appendix A.4]{BKRW} for the case $q=3$ and \cite[Lemma 1]{MLN} for the general case).
\end{remark}

\subsection{The additive model with H\"{o}lder smoothness}\label{aam} We continue the discussion of the additive model in Section \ref{amsc}, and consider briefly the case of H\"{o}lder smoothness and spaces of piecewise polynomials. 
\begin{assumption}\label{hassh}
Suppose that $X_1$ and the $X_{2j}$ take values in $[0,1]$ and have densities $p_{X_1}$ and $p_{X_{2j}}$ with respect to the Lebesgue measure on $[0,1]$, which satisfy $p_{X_1}\geq c$ and $p_{X_{2j}}\geq c$ for some constant $c>0$.

Moreover, suppose that the function $f_1$ belongs to the H\"{o}lder class $\mathcal{H}_1(\alpha_1,K_1)$ on $[0,1]$, where $\alpha_1>0$ and $K_1\geq 0$ (see, e.g., \cite[Definition 1.2]{T}), and that there is a decomposition $f_2=\sum_{j=1}^{q-1}f_{2j}$ such that $f_{2j}\in\mathcal{H}_{2j}(\alpha_2,K_2)\cap H_{2j}$, where $\alpha_2>0$ and $K_2\geq 0$. 
\end{assumption}
Let $V_1$ be the intersection of $H_1$ with the space of regular piecewise polynomials (in the variable $x_1$) with integer-valued parameters $r_1=\left\lfloor \alpha_1\right\rfloor$ and $m_1$, where $r_1$ is the maximal degree of the polynomials and $\{0<1/m_1<2/m_1<\dots<1\}$ generates the partition of $[0,1]$ into $m_1$ intervals (see, e.g., \cite{BM2}), and let $W_1$ be the intersection of $H_1$ with the space of regular piecewise polynomials (in the variable $x_1$) with integer-valued parameters $r_1=\left\lfloor \alpha_1\right\rfloor$ and $m_{W_1}$ (in order that $W_1\subseteq V_1$ we need that $m_1$ is a multiple of $m_{W_1}$).
Moreover, let $V_2=\sum_{j=1}^{q-1}V_{2j}$, where $V_{2j}$ is the intersection of $H_{2j}$ with the space of regular piecewise polynomials (in the variable $x_{2j}$) with integer-valued parameters $r_2=\left\lfloor \alpha_2\right\rfloor$ and $m_2$. Note that alternatively, one could also consider spaces of splines with the same parameters (see, e.g., \cite[Chapter VII, VIII]{dB}). We have $d_1=(r_1+1)m_1-1$, $d_{2j}=(r_2+1)m_2-1$, if $j=1,\dots,q-1$, and $d_{2j}=(r_2+1)m_2$, if $j=q-1$. Using Taylor's theorem, one can show that there are constants $C_1$ and $C_2$ depending only on $\alpha_1$ and $\alpha_2$, respectively, such that
\begin{equation}\label{eq:apprpr1}
\inf _{g_1\in V_1}\Vert h_1-g_1\Vert_\infty^2\leq C_1K_1^2d_1^{-2\alpha_1}
\end{equation}
for all $h_1\in \mathcal{H}(\alpha_1,K_1)\cap H_1$ and 
\begin{equation}\label{eq:apprpr2}
\inf _{g_{2j}\in V_{2j}}\Vert h_{2j}-g_{2j}\Vert_\infty^2\leq C_2K_2^2d_{2j}^{-2\alpha_2}
\end{equation}
for all $h_{2j}\in \mathcal{H}(\alpha_2,K_2)\cap H_{2j}$. Note that similar bounds also hold for spline spaces (see, e.g., \cite[Chapter XII]{dB}). Concerning Assumption \ref{assinfty}, we have a similar result as in the previous section:
\begin{lemma}\label{varphi} Let Assumptions \ref{angle} and \ref{ac}, and \ref{hassh} be satisfied. Let $r=r_1\vee r_2$. Then we have
\[\varphi^2\leq\frac{2(r+1)}{c(1-\rho_0)(1-\epsilon_2)}\frac{d_1+\sum_{j=1}^{q-1} d_{2j}}{d}.\]
In particular, if the $V_{2j}$ are linearly independent, then we have
\[\varphi^2\leq\frac{2(r+1)}{c(1-\rho_0)(1-\epsilon_2)}.\]
\end{lemma}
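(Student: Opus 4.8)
The plan is to follow the proof of Lemma \ref{varphis}, with the trigonometric--polynomial $L^\infty$--$L^2$ comparison of Section \ref{a1dim} replaced by the piecewise--polynomial one; the factor $r+1$ enters exactly here. First I would record that, for a polynomial $P$ of degree at most $s$ on an interval $I$ of length $h$,
\[
\|P\|_{L^\infty(I)}^{2}\le\frac{(s+1)^{2}}{h}\,\|P\|_{L^{2}(I)}^{2},
\]
with $L^2$ against Lebesgue measure. This follows by rescaling to $[-1,1]$ and expanding $P$ in the $L^{2}([-1,1])$-orthonormal Legendre basis $\{\hat{P}_{k}\}_{0\le k\le s}$: Cauchy--Schwarz gives $|P(x)|^{2}\le\|P\|_{L^{2}([-1,1])}^{2}\sum_{k\le s}\hat{P}_{k}(x)^{2}$, and $\sum_{k\le s}\hat{P}_{k}(x)^{2}=\sum_{k\le s}(k+\tfrac12)P_{k}(x)^{2}\le\sum_{k\le s}(k+\tfrac12)=(s+1)^{2}/2$ since $|P_{k}|\le 1$ on $[-1,1]$.

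Summing this over the $m_j$ subintervals of a regular partition (using $\max_{I}\|P|_{I}\|_{L^{2}(I)}^{2}\le\|P\|_{L^{2}[0,1]}^{2}$) and passing from Lebesgue measure to $\mathbb{P}^{X_{j}}$ via the lower bounds $p_{X_{1}}\ge c$, $p_{X_{2j}}\ge c$ gives $\|g_1\|_\infty^2\le c^{-1}(r_{1}+1)^{2}m_{1}\|g_1\|^{2}$ for $g_1\in V_1$ and $\|g_{2j}\|_\infty^2\le c^{-1}(r_{2}+1)^{2}m_{2}\|g_{2j}\|^{2}$ for $g_{2j}\in V_{2j}$. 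Since $(r_{1}+1)m_{1}=d_{1}+1$, $(r_{2}+1)m_{2}\le d_{2j}+1$ and $r_{1},r_{2}\le r$, these read
\[
\|g_1\|_\infty\le\sqrt{\tfrac{2(r+1)}{c}}\,\sqrt{d_{1}}\,\|g_1\|\ \ (g_1\in V_1),\qquad \|g_{2j}\|_\infty\le\sqrt{\tfrac{2(r+1)}{c}}\,\sqrt{d_{2j}}\,\|g_{2j}\|\ \ (g_{2j}\in V_{2j}),
\]
so each of the $q$ blocks $V_1,V_{21},\dots,V_{2,q-1}$ obeys the $L^\infty$-bound needed to run a Remark \ref{assinftyrem}-type argument, with constant $2(r+1)/c$.

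To assemble these, given $g\in V=V_1+\sum_j V_{2j}$ I would write $g=g_1+g_2$ with $g_1\in V_1$, $g_2\in V_2$ (unique by Assumption \ref{angle}); Lemma \ref{angleequiv}(ii) applied to $V_1\subseteq H_1$, $V_2\subseteq H_2$ gives $\|g_1\|^{2}+\|g_2\|^{2}\le(1-\rho_0)^{-1}\|g\|^{2}$. Choosing a decomposition $g_2=\sum_j g_{2j}$ with $g_{2j}\in V_{2j}$ and $\sum_j\|g_{2j}\|^{2}\le(1-\epsilon_2)^{-1}\|g_2\|^{2}$ then yields $\|g_1\|^{2}+\sum_j\|g_{2j}\|^{2}\le[(1-\rho_0)(1-\epsilon_2)]^{-1}\|g\|^{2}$. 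The triangle inequality, the block bounds and the Cauchy--Schwarz inequality $\sqrt{d_1}\,\|g_1\|+\sum_j\sqrt{d_{2j}}\,\|g_{2j}\|\le(d_1+\sum_j d_{2j})^{1/2}(\|g_1\|^{2}+\sum_j\|g_{2j}\|^{2})^{1/2}$ give $\|g\|_\infty^{2}\le\varphi^{2}d\,\|g\|^{2}$ with
\[
\varphi^{2}=\frac{2(r+1)}{c(1-\rho_0)(1-\epsilon_2)}\cdot\frac{d_{1}+\sum_{j}d_{2j}}{d};
\]
when the $V_{2j}$ are linearly independent, $d=d_1+\sum_j d_{2j}$ and the last fraction equals $1$.

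The one delicate step is securing the $V_{2j}$-decomposition of $g_2$ with $\sum_j\|g_{2j}\|^{2}\le(1-\epsilon_2)^{-1}\|g_2\|^{2}$: Assumption \ref{ac} only furnishes a stable decomposition into the $H_{2j}$, and the minimal-norm $H_{2j}$-decomposition of an element of $V_2$ need not lie in the $V_{2j}$. I would handle this exactly as in the proof of Lemma \ref{varphis}, via nested angle conditions. Iterating \cite[Proposition 2.A in Appendix A.4]{BKRW}, Assumption \ref{ac} is equivalent to $\rho_0(H_{2,k},\sum_{j<k}H_{2j})<1$ for $k=2,\dots,q-1$, and the associated stability constant is bounded by a quantity monotone in these cosines; since $V_{2,k}\subseteq H_{2,k}$ and $\sum_{j<k}V_{2j}\subseteq\sum_{j<k}H_{2j}$ force $\rho_0(V_{2,k},\sum_{j<k}V_{2j})\le\rho_0(H_{2,k},\sum_{j<k}H_{2j})$, the spaces $V_{21},\dots,V_{2,q-1}$ satisfy the analogue of Assumption \ref{ac} with a constant no larger than $\epsilon_2$, which provides precisely the decomposition required. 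This transfer-through-angles is the main obstacle, but it is entirely parallel to the one already carried out for the Sobolev version in Appendix \ref{app212}.
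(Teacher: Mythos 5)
Your first two paragraphs are correct and reproduce the paper's own argument: the Legendre-basis computation recovers the inequality $\Vert g\Vert_{\infty}^{2}\le (s+1)^{2}m\int_{0}^{1}g^{2}\,dx$ that the paper imports from \cite{BM2}, the passage to $\Vert\cdot\Vert$ via $p_{X_1},p_{X_{2j}}\ge c$ together with $(r_j+1)m_j\le d_j+1\le 2d_j$ gives the block constants $2(r+1)/c$, and the assembly by the triangle inequality, Cauchy--Schwarz, Lemma \ref{angleequiv}(ii) and a decomposition satisfying \eqref{eq:kobcond} is exactly the computation in the proof of Lemma \ref{varphis} in Appendix \ref{app212}.

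The gap is in your final paragraph, where you claim that $V_{21},\dots,V_{2,q-1}$ inherit an analogue of Assumption \ref{ac} ``with a constant no larger than $\epsilon_2$.'' First, Assumption \ref{ac} is not equivalent to $\rho_0(H_{2k},\sum_{j<k}H_{2j})<1$ for all $k$: the equivalence the paper states (via \cite[Proposition 2.A in Appendix A.4]{BKRW}) is with closedness of all partial sums, which is governed by complementary (Friedrichs) angles computed after removing intersections, not by the raw minimal angles. For instance, if $H_{21}=H_{22}\neq\{0\}$, Assumption \ref{ac} holds with $\epsilon_2=0$ (decompose $h_2=h_2+0$), while $\rho_0(H_{22},H_{21})=1$. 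Second, and decisively for the constant, even when all sequential cosines are strictly less than $1$, the value of $\epsilon_2$ gives no quantitative control over them: Assumption \ref{ac} certifies the existence of one stable decomposition, it does not bound angles. So from $\rho_0(V_{2k},\sum_{j<k}V_{2j})\le\rho_0(H_{2k},\sum_{j<k}H_{2j})$ you can only deduce a stability constant of the form $\prod_{k}\bigl(1-\rho_0(H_{2k},\sum_{j<k}H_{2j})\bigr)$, which need not be $\ge 1-\epsilon_2$; your route therefore does not deliver the factor $(1-\epsilon_2)^{-1}$ in the statement. Note also that this transfer-through-angles is not what Appendix \ref{app212} does: there the block sup-norm bounds are applied directly to the components of the decomposition furnished by \eqref{eq:kobcond}, which is legitimate when, e.g., the decomposition of an element of $H_2$ into the $H_{2j}$ is unique, since then for $g_2\in V_2$ those components automatically lie in the $V_{2j}$; that uniqueness observation, rather than a comparison of minimal angles, is the way to close the issue you raise while keeping the constant $\epsilon_2$.
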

A proof of Lemma \ref{varphi} is given in Appendix \ref{app212}. Choosing $d_1$, $\dim W_1$, and $d_{2j}$, $j=1,\dots,q-1$, similarly as in the previous section, we obtain the following analogue of Theorem \ref{addmcors}:
\begin{thm}\label{addmcor} Let Assumption \ref{angle}, \ref{ac}, and \ref{hassh} be satisfied. Suppose that the right hand side of \eqref{eq:choicess} is greater than or equal to $r+1$ and that $\dim W_1\leq c(1-\rho_0)(1-\epsilon_2)n/(4(r+1)\log^4 n)$.
Then
\begin{align*}
\mathbb{E}\left[\Vert f_1-\hat{f}_1^*\Vert^2\right]&\leq
C\sigma^{\frac{4\alpha_1}{2\alpha_1+1}}K_1^{\frac{2}{2\alpha_1+1}}n^{-\frac{2\alpha_1}{2\alpha_1+1}}\\
&+C'(1+\epsilon_2')\varphi^{4\alpha_2}q^{2\alpha_2+1}(\log n)^{8\alpha_2+1}((1-\epsilon_2)n)^{-2\alpha_2}
\end{align*}
where $C$ is a constant depending only on $\alpha_1$ and $\rho_0$, and $C'$ is a constant depending only on $\alpha_2$, $c$, $r$, $\rho_0$, $K_2$, and $\|f_1\|$.
\end{thm}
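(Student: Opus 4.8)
The plan is to derive Theorem~\ref{addmcor} as a direct consequence of Corollary~\ref{corthm2} (or Corollary~\ref{corthm1}), following exactly the pattern already used to prove Theorem~\ref{addmcors} in Section~\ref{amsc}, but with piecewise-polynomial spaces replacing the trigonometric spaces and with H\"{o}lder approximation bounds replacing the Sobolev ones. First I would fix the dimensions: choose $\dim W_1\asymp(K_1^2n/\sigma^2)^{1/(2\alpha_1+1)}$, choose $d_1$ as the smallest integer with $d_1\geq c(1-\rho_0)(1-\epsilon_2)n/(8(r+1)\log^4 n)$, and choose each $d_{2j}$ as the smallest integer with $d_{2j}\geq c(1-\rho_0)(1-\epsilon_2)n/(8(q-1)(r+1)\log^4 n)$ (so that \eqref{eq:choicess} holds with the extra factor $r+1$, as required in the hypothesis). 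Lemma~\ref{varphi} then gives $\varphi^2\leq 2(r+1)/(c(1-\rho_0)(1-\epsilon_2))$ when the $V_{2j}$ are linearly independent, hence with these choices $\varphi^2 d\leq n/\log^4 n$, i.e., the constraint \eqref{eq:statmodass} of Corollary~\ref{corthm1}/\ref{corthm2} is met (this is where the hypotheses ``right-hand side of \eqref{eq:choicess} is $\geq r+1$'' and ``$\dim W_1\leq c(1-\rho_0)(1-\epsilon_2)n/(4(r+1)\log^4 n)$'' are used).

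Next I would control the four error contributions appearing in Corollary~\ref{corthm2}. The variance term $\sigma^2\dim W_1/n$ together with the bias term $\|f_1-\Pi_{W_1}f_1\|^2$ reproduce the one-dimensional rate: using \eqref{eq:apprpr1} with $d_1$ replaced by $\dim W_1$ and the density lower bound $p_{X_1}\geq c$ to pass from $\|\cdot\|_\infty$ to $\|\cdot\|$, one gets $\|f_1-\Pi_{W_1}f_1\|^2\leq (1/c)C_1K_1^2(\dim W_1)^{-2\alpha_1}$, and balancing this against $\sigma^2\dim W_1/n$ yields the term $C\sigma^{4\alpha_1/(2\alpha_1+1)}K_1^{2/(2\alpha_1+1)}n^{-2\alpha_1/(2\alpha_1+1)}$. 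The term $\|f_1-\Pi_{V_1}f_1\|^2$ is of smaller order since $d_1\gg\dim W_1$ for large $n$. For the nuisance bias $\|f_2-\Pi_{V_2}f_2\|^2$, I would proceed exactly as at the end of Section~\ref{amsc}: decompose $f_2=\sum_j f_{2j}$ per Assumption~\ref{hassh}, use the projection theorem and \eqref{eq:ric3} to bound $\|f_2-\Pi_{V_2}f_2\|^2\leq(1+\epsilon_2')\sum_j\|f_{2j}-\Pi_{V_{2j}}f_{2j}\|^2$, and then apply \eqref{eq:apprpr2} (again with $p_{X_{2j}}\geq c$) and \eqref{eq:choicess} to obtain a bound of order $(1+\epsilon_2')q^{2\alpha_2+1}\varphi^{4\alpha_2}(\log n)^{8\alpha_2}((1-\epsilon_2)n)^{-2\alpha_2}$, with the $\varphi^{4\alpha_2}$ arising because $d_{2j}\asymp(1-\rho_0)(1-\epsilon_2)n/((q-1)\log^4 n)$ and $\varphi^2$ absorbs the $(r+1)/(c(1-\rho_0)(1-\epsilon_2))$ factor. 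Finally, since Assumption~\ref{hassh} only assumes lower bounds on the densities (not Assumption~\ref{HSce}), the Hilbert-Schmidt term $\sigma^2\|K\|_{HS}^2/n$ in Corollary~\ref{corthm2} must be replaced by $\sigma^2\rho_0^2\dim W_1/n$ via the fallback \eqref{eq:pog}, which merely multiplies the variance term by $1+\rho_0^2\leq 2$ and is absorbed into the constant $C$; the residual term $\|f_1\|^2 n^{-(\kappa/2)\log n+1}$ is superpolynomially small and absorbed as well.

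Collecting the pieces and taking the supremum over $f_1\in\mathcal{H}_1(\alpha_1,K_1)$ and $f_{2j}\in\mathcal{H}_{2j}(\alpha_2,K_2)\cap H_{2j}$ — all the approximation constants $C_1,C_2$ and the density constant $c$ are uniform over these classes — gives exactly the claimed two-term bound, with $C$ depending only on $\alpha_1,\rho_0$ (and, through the $1/c$ from $p_{X_1}\geq c$, on $c$; the statement folds this into the constants) and $C'$ depending on $\alpha_2,c,r,\rho_0,K_2$, and $\|f_1\|$ (the latter entering only through the negligible tail term and through $\|f_2-\Pi_{V_2}f_2\|_\infty$-type quantities if one uses Theorem~\ref{thm2} directly). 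The only genuine subtlety — and the step I would treat most carefully — is the bookkeeping of the factor $r+1$: one must verify that with the stated hypotheses on the right-hand side of \eqref{eq:choicess} and on $\dim W_1$, Lemma~\ref{varphi}'s bound on $\varphi^2$ indeed keeps $\varphi^2 d$ below $n/\log^4 n$ so that Corollary~\ref{corthm2} applies, and that the same $\varphi^{4\alpha_2}$ is correctly propagated into the nuisance term; everything else is a routine transcription of the Sobolev argument of Section~\ref{amsc} with \eqref{eq:apprpr1}--\eqref{eq:apprpr2} in place of \eqref{eq:approx1}.
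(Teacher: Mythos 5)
Your proposal is correct and follows essentially the same route as the paper, whose own proof of Theorem \ref{addmcor} is exactly the sketch you give: repeat the argument for Theorem \ref{addmcors} with Corollary \ref{corthm2} (using the fallback $\rho_0^2\dim W_1$ in place of $\Vert K\Vert_{HS}^2$ since Assumption \ref{HSce} is not imposed), with Lemma \ref{varphi} replacing Lemma \ref{varphis} and the piecewise-polynomial bounds \eqref{eq:apprpr1}--\eqref{eq:apprpr2} replacing \eqref{eq:approx1}, the factor $r+1$ being tracked through the choice of $d_1$, $d_{2j}$ and absorbed into $\varphi^{4\alpha_2}$. One small correction: no factor $1/c$ is needed to pass from the sup-norm bounds to $\Vert\cdot\Vert$, since $\Vert h\Vert\leq\Vert h\Vert_\infty$ holds for any design density — this is precisely why $C$ in Theorem \ref{addmcor} depends only on $\alpha_1$ and $\rho_0$ and not on $c$, so your hedge about folding a $c$-dependence into $C$ is unnecessary.
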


\subsection{The additive model with smooth design densities}\label{aamsd}  We continue the example in Section \ref{aam} and discuss Condition \eqref{eq:condjdens} and \eqref{eq:condjdenspr} in Theorem \ref{thm3} and \ref{thm4}, respectively. First, we apply Theorem \ref{thm3} in the simple case $q=2$. We suppose that Assumption \ref{HSce} holds and that for each fixed $x_1$, 
\begin{equation}\label{eq:p2joint}
 \frac{p_X(x_1,x_2)}{p_{X_1}(x_1)p_{X_2}(x_2)}\in\mathcal{H}(\beta,h_1(x_1))
\end{equation}
with $h_{1}\in L^2(\mathbb{P}^{X_1})$. Let $V_1$ and $W_1$ as in the previous section, and let $V_2$ be the space of regular piecewise polynomials in the variable $x_{2}$ with parameters $r_2=\lfloor \alpha_2\rfloor\vee \lfloor\beta\rfloor$ and $d_2$ as in \eqref{eq:choices1}. Then \eqref{eq:apprpr2} holds with a constant $C_2$ depending only on $\alpha_2$ and $r_2$. 
Moreover, \eqref{eq:condjdens} is satisfied with $h_1$ from \eqref{eq:p2joint} and $\psi(V_2)=\sqrt{C_3}d_2^{-\beta}$, where $C_3$ is a constant depending only on $\beta$ and $r_2$. Thus
\begin{equation*}
 \Vert h_1\Vert\psi(V_2)\phi(V_2)\leq \sqrt{C_{2}C_3}K_2\Vert h_1\Vert\left(\frac{n}{4\varphi^2\log^4 n}\right)^{-\alpha_2-\beta}
\end{equation*}
From Theorem \ref{thm3}, we obtain:
\begin{corollary}\label{appliabrut} Let $q=2$. Let Assumption \ref{angle}, \ref{hassh}, and \ref{HSce} be satisfied. Suppose that \eqref{eq:p2joint} holds, and that 
\begin{equation}\label{eq:dlskh}
\alpha_2+\beta>\alpha_1/(2\alpha_1+1).
\end{equation}
Then 
\begin{equation}\label{eq:aseq}
\limsup_{n\rightarrow \infty}\mathbb{E}\left[n^{\frac{2\alpha_1}{2\alpha_1+1}}\Vert f_1-\hat{f}_1^*\Vert^2\right]\leq
C\sigma^{\frac{4\alpha_1}{2\alpha_1+1}}K_1^{\frac{2}{2\alpha_1+1}},
\end{equation}
where $C$ is a constant depending only on $\alpha_1$.
\end{corollary}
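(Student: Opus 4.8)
The plan is to read off Corollary~\ref{appliabrut} from Theorem~\ref{thm3}, applied to the piecewise‑polynomial spaces $V_1$, $W_1$, $V_2$ described above (with $W_1$ of the optimal dimension $\dim W_1\asymp (K_1^2 n/\sigma^2)^{1/(2\alpha_1+1)}$, $V_2$ of dimension $d_2\asymp n/(\log n)^4$ as in \eqref{eq:choices1}, and $V_1$ a preliminary space of an intermediate dimension $d_1$ to be calibrated), and then to show that the bound of Theorem~\ref{thm3} collapses, as $n\to\infty$, to the single term of order $\sigma^{4\alpha_1/(2\alpha_1+1)}K_1^{2/(2\alpha_1+1)}n^{-2\alpha_1/(2\alpha_1+1)}$.

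\emph{Verifying the hypotheses.} Assumption~\ref{compass} holds since for $q=2$ we have $H_2=L^2(\mathbb{P}^{X_2})$ and $f_1$ is a centered $\alpha_1$‑H\"older function under Assumption~\ref{hassh}; Assumptions~\ref{angle} and \ref{HSce} are assumed; Assumption~\ref{assinfty} together with the auxiliary bound $\Vert g_1\Vert_\infty\le\varphi\sqrt{d_1}\Vert g_1\Vert$ on $V_1$ follows from Lemma~\ref{varphi} (with $\epsilon_2=0$, since Assumption~\ref{ac} is trivial when $q=2$) and the standard inverse estimate for piecewise polynomials on a regular partition, enlarging $\varphi$ if necessary to cover both constants. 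The genuinely new input is condition~\eqref{eq:condjdens}: by \eqref{eq:p2joint} the function $x_2\mapsto p_X(x_1,x_2)/(p_{X_1}(x_1)p_{X_2}(x_2))$ belongs to $\mathcal{H}(\beta,h_1(x_1))$ for $\mathbb{P}^{X_1}$‑almost all $x_1$, so — since $r_2=\lfloor\alpha_2\rfloor\vee\lfloor\beta\rfloor$ — Taylor's theorem as in \eqref{eq:apprpr2} furnishes $g_{x_1}\in V_2$ with $\Vert p_X(x_1,\cdot)/(p_{X_1}p_{X_2})-g_{x_1}\Vert_\infty^2\le C_3(h_1(x_1))^2 d_2^{-2\beta}$; bounding $\Vert\cdot\Vert_{L^2(\mathbb{P}^{X_2})}\le\Vert\cdot\Vert_\infty$ and using that $\Pi_{V_2}$ is the $L^2$‑best approximation yields \eqref{eq:condjdens} with $\psi(V_2)=\sqrt{C_3}\,d_2^{-\beta}$ and the $h_1$ of \eqref{eq:p2joint}. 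Similarly $\phi(V_2)$ and $\Vert f_2-\Pi_{V_2}f_2\Vert_\infty$ are both $\lesssim K_2 d_2^{-\alpha_2}$, using \eqref{eq:apprpr2} and the $L^\infty$‑stability of the (local) $L^2$‑projection onto a piecewise‑polynomial space, available here because $V_2$ carries no centering constraint for $q=2$.

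\emph{Collapsing the bound.} In Theorem~\ref{thm3}, the first bracket, bounded by $C_1K_1^2(\dim W_1)^{-2\alpha_1}+\sigma^2\dim W_1/n$ via \eqref{eq:apprpr1} and $\Vert\cdot\Vert\le\Vert\cdot\Vert_\infty$, is of order $\sigma^{4\alpha_1/(2\alpha_1+1)}K_1^{2/(2\alpha_1+1)}n^{-2\alpha_1/(2\alpha_1+1)}$ with an $\alpha_1$‑only constant, and the accompanying factor $1+O((1-\rho_0^2)^{-1}\varphi^2 d/n)$ tends to $1$ because $\varphi^2 d\le n/(\log n)^4$ and $\delta=1/\log n\to 0$. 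Each remaining contribution is $o(n^{-2\alpha_1/(2\alpha_1+1)})$: the term $\Vert f_1-\Pi_{V_1}f_1\Vert^2\lesssim K_1^2 d_1^{-2\alpha_1}$ once $d_1$ has order larger than $n^{1/(2\alpha_1+1)}$; the product term $\Vert h_1\Vert^2(\phi(V_2)\psi(V_2))^2\asymp d_2^{-2(\alpha_2+\beta)}\asymp (n/(\log n)^4)^{-2(\alpha_2+\beta)}$ precisely by \eqref{eq:dlskh} (this is the only use of the density‑ratio smoothness $\beta$, and it is exactly what replaces the classical requirement $\alpha_2>\alpha_1/(2\alpha_1+1)$); the term $n^{-1}\Vert h_1\Vert^2\Vert f_2-\Pi_{V_2}f_2\Vert_\infty^2\psi(V_2)^2$ is the previous one divided by $n$; and $\phi(V_2)^2\varphi^2 d_1/n\lesssim d_2^{-2\alpha_2}\varphi^2 d_1/n$ is made negligible by choosing $d_1\asymp n^{\gamma}$ with $1/(2\alpha_1+1)<\gamma<1/(2\alpha_1+1)+2\alpha_2$ — a nonempty range for every $\alpha_2>0$ that still respects $\varphi^2 d\le n/(\log n)^4$. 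Finally $\Vert\Pi_{V_2}|_{W_1}\Vert_{HS}\le\Vert K\Vert_{HS}<\infty$ by Assumption~\ref{HSce}, so the Hilbert–Schmidt variance term is $O(1/n)$, and $R_n$ is $o(n^{-2\alpha_1/(2\alpha_1+1)})$ with the choices $\delta=c=1/\log n$ and the theoretical $k_n$ from the discussion following Theorem~\ref{thm1}. Since all these remainders carry factors $(1-\rho_0^2)^{-1}$ or $(1-\rho_0^2)^{-2}$ but are of strictly smaller order than the rate, the surviving term — which contains no $\rho_0$ — yields \eqref{eq:aseq} with a constant depending only on $\alpha_1$ after multiplying by $n^{2\alpha_1/(2\alpha_1+1)}$ and passing to the limsup.

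\textbf{Main obstacle.} The delicate point will be the simultaneous calibration of the several auxiliary error terms produced by Theorem~\ref{thm3}. Making the $\psi(V_2)$‑product term small demands $d_2$ as large as possible and invokes \eqref{eq:dlskh}, whereas the term $\phi(V_2)^2\varphi^2 d_1/n$ caps the preliminary dimension $d_1$ from above (in terms of $\alpha_2$) while $\Vert f_1-\Pi_{V_1}f_1\Vert^2$ forces $d_1$ above $n^{1/(2\alpha_1+1)}$; one must exhibit a choice of $(d_1,d_2,\dim W_1)$ reconciling all three, keep $\varphi^2 d$ within \eqref{eq:choices1}, and check that it leaves the $\rho_0$‑free leading term untouched. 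A secondary technical issue is the $L^\infty$ approximation/stability estimate for the $L^2$‑projection onto piecewise‑polynomial spaces, needed both to produce $\psi(V_2)$ and to control $\Vert f_2-\Pi_{V_2}f_2\Vert_\infty$.
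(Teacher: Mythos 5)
Your argument is correct and is, in substance, the paper's own proof of Corollary \ref{appliabrut}: apply Theorem \ref{thm3} with the piecewise-polynomial spaces of Section \ref{aam} (Assumption \ref{ac} being trivial with $\epsilon_2=0$ for $q=2$), verify \eqref{eq:condjdens} from \eqref{eq:p2joint} by sup-norm Taylor approximation in $V_2$ with $r_2=\lfloor\alpha_2\rfloor\vee\lfloor\beta\rfloor$, so that $\psi(V_2)=\sqrt{C_3}\,d_2^{-\beta}$ and $\phi(V_2)\lesssim K_2 d_2^{-\alpha_2}$ with $d_2$ as in \eqref{eq:choices1}, bound the Hilbert--Schmidt variance term by $\Vert K\Vert_{HS}^2/n$ via Assumption \ref{HSce}, absorb $R_n$ with $\delta=c=1/\log n$, and let the $\rho_0$-free leading bracket with $\dim W_1\asymp(K_1^2n/\sigma^2)^{1/(2\alpha_1+1)}$ produce the constant depending only on $\alpha_1$. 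The one genuine difference is your calibration of the preliminary space: the paper takes $V_1$ ``as in the previous section'', i.e.\ $d_1\asymp n/\log^4 n$, and does not comment on the term $(1-\rho_0^2)^{-1}\phi(V_2)^2\varphi^2 d_1/n$ of Theorem \ref{thm3}, which with that choice is of order $K_2^2\,d_2^{-2\alpha_2}/\log^4 n$ and is negligible relative to $n^{-2\alpha_1/(2\alpha_1+1)}$ only when $\alpha_2$ itself exceeds (roughly) $\alpha_1/(2\alpha_1+1)$ --- a condition not implied by \eqref{eq:dlskh}. Your intermediate choice $d_1\asymp n^{\gamma}$ with $1/(2\alpha_1+1)<\gamma<1/(2\alpha_1+1)+2\alpha_2$ reconciles this term with the requirement $\|f_1-\Pi_{V_1}f_1\|^2=o(n^{-2\alpha_1/(2\alpha_1+1)})$ while keeping $\varphi^2 d\le n/\log^4 n$, so your write-up is, if anything, the more careful implementation of the same route. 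Two minor points: the dimension-free stability bound $\|f_2-\Pi_{V_2}f_2\|_\infty\lesssim K_2d_2^{-\alpha_2}$ is not immediate under Assumption \ref{hassh}, which only lower-bounds $p_{X_2}$; but the cruder estimate $\|f_2-\Pi_{V_2}f_2\|_\infty\lesssim\varphi\sqrt{d_2}\,K_2d_2^{-\alpha_2}$, obtained from the inverse inequality on $V_2$ and the projection theorem, suffices since that term carries the factor $1/n$ and still yields a contribution of order $d_2^{-2(\alpha_2+\beta)}/\log^4 n$. Also, to have $W_1\subseteq V_1$ you should take $m_1$ a multiple of $m_{W_1}$, which your choice of $d_1$ permits for large $n$.
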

Finally, we apply Theorem \ref{thm2}. In the particular case that the $X_{2j}$, $j=1,\dots,q-1$, are independent, one can show that (see \eqref{eq:condexf})
\begin{equation*}
 r(x_1,x_2)=\sum_{j=1}^{q-1}\frac{p_{X_1,X_{2j}}(x_1,x_{2j})}{p_{X_1}(x_1)p_{X_{2j}}(x_{2j})}-(q-2),
\end{equation*} 
where $p_{X_1,X_{2j}}$ denotes the joint density of $(X_1,X_{2j})$. In this particular case, condition \eqref{eq:condjdenspr} is thus much weaker than condition \eqref{eq:condjdens}: we only need smoothness conditions on several kernels which involve only one- and two-dimensional design densities. In the general case, we have a similar result.
Suppose that for each fixed $x_1$
\begin{equation}\label{eq:jointdens}
 \frac{p_{X_1,X_{2k}}(x_1,x_{2k})}{p_{X_1}(x_1)p_{X_{2k}}(x_{2k})}\in\mathcal{H}(\beta,h_{1k}(x_1))
\end{equation}
with $h_{1k}\in L^2(\mathbb{P}^{X_1})$, for all $k=1,\dots,q-1$. Moreover, suppose that for each fixed $x_{2j}$
\begin{equation}\label{eq:jointdens2}
 \frac{p_{X_{2j},X_{2k}}(x_{2j},x_{2k})}{p_{X_{2j}}(x_{2j})p_{X_{2k}}(x_{2k})}\in\mathcal{H}(\beta,h'_{jk}(x_{2j}))
\end{equation}
with $h'_{jk}\in L^2(\mathbb{P}^{X_{2j}})$, for all $j,k=1,\dots,q-1$. Then we have:
\begin{corollary}\label{corlemmab} Let $q>2$. Let Assumption \ref{angle}, \ref{ac}, \ref{hassh}, and \ref{HSce} be satisfied. Suppose that \eqref{eq:jointdens} and \eqref{eq:jointdens2} are satisfied. Moreover, suppose that $\alpha_2+\beta/(2\alpha_1+1)>\alpha_1/(2\alpha_1+1)$. Then \eqref{eq:aseq} holds, where $C$ is a constant depending only on $\alpha_1$.
\end{corollary}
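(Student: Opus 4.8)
The plan is to obtain Corollary \ref{corlemmab} as a consequence of Theorem \ref{thm4}, exactly as Corollary \ref{appliabrut} was obtained from Theorem \ref{thm3}. The first step is to identify the kernel $r(x_1,\cdot)$ explicitly. Since the $X_{2j}$ are not assumed independent here (only in the motivating remark), I would instead work with the representation of $r(x_1,\cdot)$ as the orthogonal projection of $p(x_1,\cdot)/(p_1(x_1)p_2(\cdot))$ onto $H_2=\sum_{j=1}^{q-1}H_{2j}$. Using the additive structure of $H_2$ together with Assumption \ref{ac} (which guarantees the sumspace is closed and the projection decomposes controllably), I would show that $r(x_1,\cdot)=\sum_{j=1}^{q-1}r_j(x_1,x_{2j})$ for suitable one-dimensional-in-$x_{2j}$ kernels $r_j$, and that each $r_j$ inherits H\"older smoothness of order $\beta$ in $x_{2j}$ (for fixed $x_1$) from the two-dimensional densities via \eqref{eq:jointdens}; the cross-density conditions \eqref{eq:jointdens2} are what is needed to control the projection of $p_{X_1,X_{2k}}/(p_{X_1}p_{X_{2k}})$ onto the orthogonal complement pieces, i.e.\ to pass from the naive sum of bivariate kernels to the genuine projection $r$. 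This is where I expect the main work to lie: making precise, with the closedness/angle bounds of Assumption \ref{ac}, that smoothness of all the bivariate density ratios transfers to a H\"older bound $\|(1-\Pi_{V_2})r(x_1,\cdot)\|_{L^2(\mathbb{P}^{X_2})}\le h_1(x_1)\psi_\Pi(V_2)$ with $h_1\in L^2(\mathbb{P}^{X_1})$ and $\psi_\Pi(V_2)=\sqrt{C_3}\,d_2^{-\beta}$, i.e.\ verifying \eqref{eq:condjdenspr} with the right rate.

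Granting this, the second step is the now-routine approximation-theoretic bookkeeping. As in Section \ref{aam}, take $V_1,W_1,V_2=\sum_j V_{2j}$ to be the piecewise-polynomial spaces, but with polynomial degree $r_2=\lfloor\alpha_2\rfloor\vee\lfloor\beta\rfloor$ so that \eqref{eq:apprpr2} still holds and simultaneously the local polynomial approximation of the kernels $r_j$ gives $\psi_\Pi(V_2)\le\sqrt{C_3}\,d_2^{-\beta}$. Choose $d_1$, $\dim W_1$, and the $d_{2j}$ exactly as in Section \ref{amsc}/\ref{aam} (so $d_{2j}\asymp n/((q-1)\log^4 n)$ up to the constants involving $c$, $\rho_0$, $\epsilon_2$), and bound $\phi(V_2)=\|f_2-\Pi_{V_2}f_2\|$ by the same computation used before Theorem \ref{addmcors}, namely $\phi(V_2)^2\lesssim (1+\epsilon_2')q\,K_2^2\,d_{2j}^{-2\alpha_2}\asymp q^{2\alpha_2+1}(\log n)^{8\alpha_2}n^{-2\alpha_2}$ up to constants.

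The third step is to insert these rates into the bound of Theorem \ref{thm4}. The replaced term there is, up to constants depending on $\delta,\rho_0$,
\[
\|f_1-\Pi_{V_1}f_1\|^2\Bigl(1+\tfrac{2\varphi^2 d}{n}\Bigr)+\frac{\|h_1\|^2(\phi(V_2)\psi_\Pi(V_2))^2}{1-\rho_0^2}+\frac{(\phi(V_2))^2}{1-\rho_0^2}\frac{2\varphi^2 d}{n}.
\]
With $\varphi^2 d/n=O(1/\log^4 n)$ by the choice \eqref{eq:choicess} and Lemma \ref{varphi}, the first and third summands are of order $n^{-2\alpha_1/(2\alpha_1+1)}\cdot o(1)$ and $\phi(V_2)^2\cdot o(1)$ respectively, hence negligible; the new middle summand is of order $\|h_1\|^2\,\phi(V_2)^2\,d_2^{-2\beta}$, which (using $d_2\asymp n/\log^4 n$) is $O\bigl(q^{2\alpha_2+1}(\log n)^{8\alpha_2+8\beta}n^{-2\alpha_2-2\beta}\bigr)$ up to constants. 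Combining with the leading bias/variance terms from Theorem \ref{thm2}, which contribute the $C\sigma^{4\alpha_1/(2\alpha_1+1)}K_1^{2/(2\alpha_1+1)}n^{-2\alpha_1/(2\alpha_1+1)}$ term and a negligible $1+C/\log n$ factor, the total nuisance-induced error is of smaller order than $n^{-2\alpha_1/(2\alpha_1+1)}$ precisely when $2\alpha_2+2\beta/(2\alpha_1+1)>2\alpha_1/(2\alpha_1+1)$ after accounting for the $q^{2\alpha_2+1}$ factor in an asymptotic regime where $q$ may grow slowly — which is the stated condition $\alpha_2+\beta/(2\alpha_1+1)>\alpha_1/(2\alpha_1+1)$. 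Taking $\limsup_{n\to\infty}$ and noting that under Assumption \ref{HS} (implied here, since Assumption \ref{HSce} holds and $\rho_0<1$) the $(1-\rho_0^2)^{-1}$ factors sit only in vanishing remainders, we recover \eqref{eq:aseq} with $C$ depending only on $\alpha_1$. The one genuinely delicate point, as flagged above, is Step 1 — the decomposition and smoothness transfer for $r(x_1,\cdot)$ in the non-independent case — everything after that is a repetition of the estimates already carried out for Theorems \ref{addmcors} and \ref{addmcor} and Corollary \ref{appliabrut}.
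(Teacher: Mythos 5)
Your overall route (verify \eqref{eq:condjdenspr} for the additive $H_2$, then feed Theorem \ref{thm4} with the piecewise-polynomial spaces) is the paper's route, but two essential pieces are missing or wrong. First, Step 1 is not a side remark to be ``granted'': it is the actual content of the paper's proof (Lemma \ref{lemmab} in Appendix \ref{app2b}). There, the projection $r(x_1,\cdot)$ onto $H_2=\sum_j H_{2j}$ is decomposed as $r=\sum_k r_k$ via Assumption \ref{ac}, and the characterization of the projection through conditional expectations (cf.\ \eqref{eq:condexf}) yields the system
$r_k(x_{2k})=\frac{p_{X_1,X_{2k}}(x_1,x_{2k})}{p_{X_1}(x_1)p_{X_{2k}}(x_{2k})}-\sum_{j\neq k}\int r_j(x_{2j})\frac{p_{X_{2j},X_{2k}}(x_{2j},x_{2k})}{p_{X_{2j}}(x_{2j})p_{X_{2k}}(x_{2k})}p_{X_{2j}}(x_{2j})\,dx_{2j}$,
from which \eqref{eq:jointdens}, \eqref{eq:jointdens2}, Cauchy--Schwarz and \eqref{eq:kobcond} give $\|(1-\Pi_{V_2})r(x_1,\cdot)\|_{L^2(\mathbb{P}^{X_2})}\leq h_1(x_1)\psi_\Pi(V_2)$ with an explicit $h_1\in L^2(\mathbb{P}^{X_1})$ and $\psi_\Pi(V_2)=\sqrt{C_3}\bigl(\sum_{k}d_{2k}^{-2\beta}\bigr)^{1/2}$ (note: governed by the componentwise dimensions $d_{2k}$, not the total $d_2$ as you write). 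You explicitly flag this as ``where the main work lies'' and do not carry it out, so the proposal leaves the heart of the proof unproven.

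Second, your rate bookkeeping in Step 3 does not deliver the stated condition, and the error is exactly the term the paper warns about after Theorem \ref{thm4}: the summand $\frac{(\phi(V_2))^2}{1-\rho_0^2}\frac{2\varphi^2 d}{n}$ ``is not always negligible.'' With your maximal choice $d\asymp n/\log^4 n$ this term is of order $\phi(V_2)^2/\log^4 n\asymp n^{-2\alpha_2}$ up to logarithms, and calling it ``$\phi(V_2)^2\cdot o(1)$, hence negligible'' would require $\alpha_2>\alpha_1/(2\alpha_1+1)$, which is \emph{not} implied by the hypothesis $\alpha_2+\beta/(2\alpha_1+1)>\alpha_1/(2\alpha_1+1)$ when $\alpha_2$ is small and $\beta$ is large. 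Moreover, with your choices the middle term is of order $n^{-2(\alpha_2+\beta)}$, whose negligibility needs only $\alpha_2+\beta>\alpha_1/(2\alpha_1+1)$; the factor $q^{2\alpha_2+1}$ (with $q$ fixed here) cannot convert $\beta$ into $\beta/(2\alpha_1+1)$, so your ``precisely when'' derivation of the stated condition is not valid. The condition of the corollary arises from \emph{balancing} the middle and last terms by taking $d_2\asymp n^{\gamma}$ with an intermediate $\gamma$: one needs $2\gamma(\alpha_2+\beta)>\frac{2\alpha_1}{2\alpha_1+1}$ and $2\gamma\alpha_2+1-\gamma>\frac{2\alpha_1}{2\alpha_1+1}$ simultaneously (keeping $d_1$ just above $n^{1/(2\alpha_1+1)}$ so the $\frac{1}{1-\rho_0^2}\|f_1-\Pi_{V_1}f_1\|^2$ term stays of smaller order), and such a $\gamma\in(0,1]$ exists precisely when $\alpha_2+\beta/(2\alpha_1+1)>\alpha_1/(2\alpha_1+1)$ (automatically for $\alpha_2\geq 1/2$). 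Without this nonmaximal, balanced choice of $V_2$ your argument proves the corollary only under the stronger hypothesis $\alpha_2>\alpha_1/(2\alpha_1+1)$.
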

A proof of Corollary \ref{corlemmab} is given in Appendix \ref{app2b}. Note that in Corollary \ref{corlemmab}, the smoothness condition is stronger than the smoothness condition given in \eqref{eq:dlskh}. 

\section{Proof of Theorem \ref{thm1} and \ref{thm2}}\label{prthm12}
\subsection{The finite sample geometry}\label{prel} 
In this section, we present an empirical version of Assumption \ref{angle}, which holds on the event
\begin{equation*}\mathcal{E}_{\delta}=\left\lbrace (1-\delta)\Vert g\Vert^2\leq\Vert g\Vert_n^2\leq (1+\delta)\Vert g\Vert^2\ \text{ for all }g\in V\right\rbrace,
\end{equation*} 
where $0<\delta<1$ is the constant from Theorem \ref{thm1}.
Moreover, using concentration of measure inequalities for structured random matrices, we lower bound the probability that the event $\mathcal{E}_\delta$ occurs. 

In order to state our first result, we introduce the empirical inner product $\langle \cdot,\cdot\rangle_n$ and the corresponding empirical norm $\|\cdot\|_n$ which are given by 
\[\langle g,h\rangle_n=\frac{1}{n}\sum_{i=1}^n g(X^i)h(X^i)\]
and $\|g\|_n^2=\langle g,g \rangle_n$ for $g,h\in L^2(\mathbb{P}^X)$.
\begin{proposition}\label{empangle} Let Assumptions \ref{compass} and \ref{angle} hold. If $\mathcal{E}_\delta$ holds, then we have
\begin{align}
 \Vert g_1+g_2\Vert^2_n&\geq \frac{(1-\delta)}{(1+\delta)}(1-\rho_0)(\Vert g_1\Vert^2_n+\Vert g_2\Vert^2_n)\label{eq:empang}\ \ \text{ and}\\
 \Vert g_1+g_2\Vert^2_n&\geq \frac{(1-\delta)}{(1+\delta)}(1-\rho_0^2)\Vert g_1\Vert^2_n\label{eq:empangs}
\end{align}
for all $g_1\in V_1$, $g_2\in V_2$, and also
\begin{equation}\label{eq:backfcond}
 \frac{|\langle g_1,g_2\rangle_n|}{\Vert g_1\Vert_n\Vert g_2\Vert_n}\leq 1-\frac{(1-\delta)}{(1+\delta)}(1-\rho_0)
\end{equation} 
for all $0\neq g_1\in V_1$, $0\neq g_2\in V_2$.
\end{proposition}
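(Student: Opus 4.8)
The plan is to transfer the Hilbert-space inequalities of Lemma \ref{angleequiv} from the norm $\|\cdot\|$ to the empirical norm $\|\cdot\|_n$, using the norm equivalence that holds on $\mathcal{E}_\delta$. The starting point is Assumption \ref{angle}, which gives $\rho_0(H_1,H_2)<1$; since $V_1\subseteq H_1$ and $V_2\subseteq H_2$, we also have $\rho_0(V_1,V_2)\leq\rho_0(H_1,H_2)=\rho_0<1$, so Lemma \ref{angleequiv}(ii) and (iii) apply with $\varrho=\rho_0$ to all $g_1\in V_1$, $g_2\in V_2$, in the ambient norm $\|\cdot\|$.

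First I would prove \eqref{eq:empang}. Fix $g_1\in V_1$ and $g_2\in V_2$; then $g_1+g_2\in V$, so on $\mathcal{E}_\delta$ we have $\|g_1+g_2\|_n^2\geq(1-\delta)\|g_1+g_2\|^2$. By Lemma \ref{angleequiv}(ii), $\|g_1+g_2\|^2\geq(1-\rho_0)(\|g_1\|^2+\|g_2\|^2)$. Since $g_1,g_2\in V$ individually, the upper bound in $\mathcal{E}_\delta$ gives $\|g_1\|^2\geq(1+\delta)^{-1}\|g_1\|_n^2$ and likewise for $g_2$. Chaining these three estimates yields
\[
\|g_1+g_2\|_n^2\geq(1-\delta)(1-\rho_0)\bigl(\|g_1\|^2+\|g_2\|^2\bigr)\geq\frac{1-\delta}{1+\delta}(1-\rho_0)\bigl(\|g_1\|_n^2+\|g_2\|_n^2\bigr),
\]
which is \eqref{eq:empang}. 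The argument for \eqref{eq:empangs} is identical, invoking Lemma \ref{angleequiv}(iii) in place of (ii): $\|g_1+g_2\|_n^2\geq(1-\delta)\|g_1+g_2\|^2\geq(1-\delta)(1-\rho_0^2)\|g_1\|^2\geq\frac{1-\delta}{1+\delta}(1-\rho_0^2)\|g_1\|_n^2$.

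For \eqref{eq:backfcond} I would deduce it from \eqref{eq:empang} by a short algebraic manipulation rather than re-running the transfer. Let $0\neq g_1\in V_1$, $0\neq g_2\in V_2$, write $\beta=\frac{1-\delta}{1+\delta}(1-\rho_0)$, and apply \eqref{eq:empang} to $\pm g_2$ in place of $g_2$:
\[
\|g_1\|_n^2\pm 2\langle g_1,g_2\rangle_n+\|g_2\|_n^2=\|g_1\pm g_2\|_n^2\geq\beta\bigl(\|g_1\|_n^2+\|g_2\|_n^2\bigr),
\]
so $\mp 2\langle g_1,g_2\rangle_n\leq(1-\beta)(\|g_1\|_n^2+\|g_2\|_n^2)$, i.e. $2|\langle g_1,g_2\rangle_n|\leq(1-\beta)(\|g_1\|_n^2+\|g_2\|_n^2)$. (Note $\|g_j\|_n\neq 0$ here because on $\mathcal{E}_\delta$ the norms $\|\cdot\|$ and $\|\cdot\|_n$ are equivalent on $V$ and $g_j\neq 0$.) Rescaling $g_1$ and $g_2$ so that $\|g_1\|_n=\|g_2\|_n=1$ gives $|\langle g_1,g_2\rangle_n|\leq 1-\beta$, which is exactly \eqref{eq:backfcond}. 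I do not anticipate a genuine obstacle here; the only point requiring care is ensuring the empirical norms of nonzero elements of $V$ do not vanish on $\mathcal{E}_\delta$, which follows immediately from the definition of $\mathcal{E}_\delta$, and keeping track of the direction of the one-sided inequalities when combining the $\pm$ cases.
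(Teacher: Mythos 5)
Your proof is correct and follows essentially the same route as the paper: both chain the lower bound from $\mathcal{E}_\delta$, Lemma \ref{angleequiv}(ii)/(iii) for $\|\cdot\|$, and the upper bound from $\mathcal{E}_\delta$ to get \eqref{eq:empang} and \eqref{eq:empangs}, and both obtain \eqref{eq:backfcond} from \eqref{eq:empang}; where the paper simply cites the implication (ii)$\Rightarrow$(i) of Lemma \ref{angleequiv} in the empirical inner product, you inline that same elementary $\pm g_2$ argument, which is a harmless difference.
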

\begin{proof}
Let $g_1\in V_1$ and $g_2\in V_2$. By the definition of $\mathcal{E}_\delta$ and Lemma \ref{angleequiv} combined with Assumption \ref{angle}, we have
\begin{align*}
 \Vert g_1+g_2\Vert^2_n \geq (1-\delta)\Vert g_1+g_2\Vert^2
   						&\geq (1-\delta)(1-\rho_0)(\Vert g_1\Vert^2+\Vert g_2\Vert^2)\\
   						&\geq \frac{(1-\delta)}{(1+\delta)}(1-\rho_0)(\Vert g_1\Vert^2_n+\Vert g_2\Vert^2_n)
\end{align*}
and similarly
\begin{align*}
 \Vert g_1+g_2\Vert^2_n \geq (1-\delta)\Vert g_1+g_2\Vert^2
   						&\geq (1-\delta)(1-\rho_0^2)\Vert g_1\Vert^2\\
   						&\geq \frac{(1-\delta)}{(1+\delta)}(1-\rho_0^2)\Vert g_1\Vert^2_n.
\end{align*}
This gives \eqref{eq:empang} and \eqref{eq:empangs}. \eqref{eq:backfcond} follows from \eqref{eq:empang} and Lemma \ref{angleequiv}.
This completes the proof.
\end{proof}
The following result follows from Rauhut \cite[Theorem 7.3]{R} (see also \cite[Theorem 3.1]{RV}). It can also be obtained from a combination of Talagrand's inequality and Rudelson's lemma (see \cite[Theorem 1]{Rud}).
\begin{thm}\label{mfg} Let Assumption \ref{assinfty} hold. Then we have
\[\mathbb{P}\left(\mathcal{E}_\delta\right)\geq 1-2^{3/4}d\exp\left( -\kappa\frac{n\delta^2}{\varphi^2d}\right),\]
where $\kappa$ is a universal constant. 
\end{thm}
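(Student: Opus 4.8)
The plan is to recast the event $\mathcal{E}_\delta$ as a spectral statement about a sum of i.i.d.\ rank-one random matrices and then invoke the cited result of Rauhut. First I would fix an orthonormal basis $g_1,\dots,g_d$ of the $d$-dimensional space $V$ with respect to $\langle\cdot,\cdot\rangle$, and for each $i$ set $Z^i=(g_1(X^i),\dots,g_d(X^i))^T\in\mathbb{R}^d$. Writing an arbitrary $g\in V$ as $g=\sum_{k=1}^d a_kg_k$, one has $\|g\|^2=\|a\|_2^2$ and $\|g\|_n^2=a^T\hat\Sigma a$, where $\hat\Sigma=\frac1n\sum_{i=1}^nZ^i(Z^i)^T$. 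Since the $g_k$ are orthonormal, $\mathbb{E}[Z^i(Z^i)^T]=I_d$, and because $g\mapsto a$ is a bijection from $V$ onto $\mathbb{R}^d$, the event $\mathcal{E}_\delta$ is exactly the event $\{\|\hat\Sigma-I_d\|_{\mathrm{op}}\le\delta\}$, the operator norm being on $\mathbb{R}^d$ (here one uses that $\hat\Sigma$ is symmetric and positive semidefinite, so the two-sided bound on quadratic forms is equivalent to a bound on the eigenvalues).

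The role of Assumption \ref{assinfty} is to supply the uniform bound $\|Z^i\|_2^2\le\varphi^2d$ almost surely. Indeed, for any unit vector $a\in\mathbb{R}^d$ the function $g_a=\sum_k a_kg_k$ satisfies $\|g_a\|=1$, hence $|\langle a,Z^i\rangle|=|g_a(X^i)|\le\|g_a\|_\infty\le\varphi\sqrt d$ by \eqref{eq:statmodass1}; taking the supremum over $a$ gives $\|Z^i\|_2\le\varphi\sqrt d$, equivalently $\sum_{k=1}^dg_k(x)^2\le\varphi^2d$ for $\mathbb{P}^X$-almost every $x$.

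With these two ingredients—i.i.d.\ rank-one summands with identity mean and the almost-sure bound $\max_i\|Z^i\|_2^2\le\varphi^2d$—the estimate $\mathbb{P}(\|\hat\Sigma-I_d\|_{\mathrm{op}}>\delta)\le 2^{3/4}d\exp(-\kappa n\delta^2/(\varphi^2d))$ is precisely the content of Rauhut \cite[Theorem 7.3]{R} applied to the rescaled sampling matrix with rows $Z^i/\sqrt n$; alternatively it follows from symmetrization combined with Rudelson's bound for $\mathbb{E}\|\frac1n\sum_i\varepsilon_iZ^i(Z^i)^T\|_{\mathrm{op}}$ and Talagrand's concentration inequality for suprema of empirical processes, as in \cite[Theorem 1]{Rud}. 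Passing to the complementary event yields the claim, with $\kappa$ the universal constant furnished by that theorem.

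I do not expect a substantive obstacle, since the analytic heavy lifting is outsourced to the external matrix concentration inequality. The only points that need care are the (elementary but essential) translation between the function-space description of $\mathcal{E}_\delta$ and the random-matrix description, and verifying that the hypothesis of the cited theorem is met with the quantity $\varphi^2d$ rather than the cruder $K^2d$ one would obtain by bounding each $\|g_k\|_\infty$ individually; it is exactly the joint $\infty$-versus-$2$-norm comparison built into Assumption \ref{assinfty} that makes this work and keeps the bound dimension-efficient.
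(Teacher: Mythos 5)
Your proposal is correct and follows essentially the same route as the paper's proof: an orthonormal basis of $V$, the identification of $\mathcal{E}_\delta$ with $\{\Vert B_n-I\Vert_{\operatorname{op}}\leq\delta\}$ for the empirical Gram matrix (your $\hat\Sigma$), the uniform bound $\sum_k g_k^2\leq\varphi^2 d$ from Assumption \ref{assinfty} (which the paper gets by citing \cite[Lemma 1]{BM} and you verify directly), and the appeal to Rauhut \cite[Theorem 7.3]{R} or, alternatively, Rudelson--Talagrand. No gaps worth noting.
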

\begin{proof}
Let $b_1,\dots,b_d$ be an orthonormal basis of $V$. By Assumption \ref{assinfty} and \cite[Lemma 1]{BM}, we have
\begin{equation}\label{eq:leq}
\Big\|\sum_{j=1}^{d}b_j^2\Big\|_{\infty}\leq \varphi^2d.
\end{equation}
Now let
\[B_n=(\langle b_j,b_k\rangle_n)_{1\leq j,k\leq d}.\]
Then \cite[Theorem 7.3]{R} (in the case $s=N=d$) yields for $0<\delta<1$, 
\begin{equation}\label{eq:rerav}
\mathbb{P}(\Vert B_n-I\Vert_{\operatorname{op}} \leq \delta)\geq 1- 2^{3/4}d\exp\left( -\kappa\frac{n\delta^2}{\varphi^2d}\right),
\end{equation}
where $\kappa>0$ is a universal constant. Here, $\Vert\cdot\Vert_{\operatorname{op}}$ denotes the operator norm (see Lemma \ref{trlem} (iii) for the definition).
 Note that we can apply \cite[Theorem 7.3]{R} since in the proof the condition \cite[(4.2)]{R} is only used in the form \cite[(7.5)]{R}, which is satisfied by \eqref{eq:leq}. A similar result follows from \cite[Theorem 3.1]{RV}.

Now, a function $g\in V$ with $\Vert g\Vert\leq 1$ can be written uniquely as $g=\sum_{j=1}^dx_jb_j$ with $x\in\mathbb{R}^d$ and $\Vert x\Vert_2\leq 1$. Using this and  $\Vert g\Vert_n^2=x^TB_nx$, we obtain
\begin{equation}\label{eq:rerav2}
 \sup_{g\in V,\Vert g\Vert\leq 1}|\Vert g\Vert_n^2-\Vert g\Vert^2|=\sup_{x\in \mathbb{R}^d,\Vert x\Vert_2 \leq 1}|x^T(B_n-I)x|=\Vert B_n-I\Vert_{\operatorname{op}},
\end{equation}
where the latter equality follows from the spectral theorem. Moreover, we have that $\mathcal{E}_{\delta}$ holds if and only if 
\[\sup_{g\in V,\Vert g\Vert\leq 1}|\Vert g\Vert_n^2-\Vert g\Vert^2|\leq \delta.\]
Applying this, \eqref{eq:rerav}, and \eqref{eq:rerav2}, we complete the proof.
\end{proof}

\subsection{Analysis of the variance via Von Neumann's theorem} The basic theorem in the theory of projections on sumspaces is due to von Neumann \cite{vN}. We state the following version dealing only with the first component, which is a consequence of \cite[(15) on page 378]{Ar} (see also \cite[Theorem 2.C in Appendix A.4]{BKRW}). 
\begin{lemma}\label{Arvn} Let $\mathcal{H}_1$ and $\mathcal{H}_2$ be two closed subspaces of a Hilbert space $\mathcal{H}$ with inner product $\langle\cdot,\cdot\rangle$ and norm $\|\cdot\|$. Suppose that 
$\rho_0(\mathcal{H}_1,\mathcal{H}_2)<1$. Let $\Pi$, $\Pi_1$, $\Pi_2$ be the orthogonal projections on $\mathcal{H}_1+\mathcal{H}_2$, $\mathcal{H}_1$, $\mathcal{H}_2$, respectively. Let $h\in\mathcal{H}$, and let $(\Pi h)_1\in\mathcal{H}_1$, $(\Pi h)_2\in\mathcal{H}_2$ be the unique elements such that $\Pi h=(\Pi h)_1+(\Pi h)_2$. Then
\[
\big\Vert (\Pi h)_1-\big(\Pi_1-\sum_{j=1}^k(\Pi_1\Pi_2)^j(1-\Pi_1)\big)h\big\Vert\rightarrow 0
\]
as $k$ goes to infinity.
\end{lemma}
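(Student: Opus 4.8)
The plan is to reduce the statement to a Neumann-series identity for the operator $\Pi_1\Pi_2$. Write $P=\Pi_1$ and $Q=\Pi_2$ throughout. The first step is the operator-norm bound $\|PQ\|\leq\rho_0$: given $h\in\mathcal{H}$, set $u=Qh\in\mathcal{H}_2$; then $\|Pu\|^{2}=\langle Pu,u\rangle\leq\rho_0\|Pu\|\,\|u\|$ by the definition of $\rho_0(\mathcal{H}_1,\mathcal{H}_2)$ applied to $Pu\in\mathcal{H}_1$ and $u\in\mathcal{H}_2$, hence $\|PQh\|=\|Pu\|\leq\rho_0\|u\|\leq\rho_0\|h\|$. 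Since $\rho_0<1$, the series $A:=\sum_{j\geq0}(PQ)^{j}$ converges in operator norm, equals $(I-PQ)^{-1}$, and commutes with $PQ$. Consequently the partial sums $S_k:=\Pi_1-\sum_{j=1}^{k}(\Pi_1\Pi_2)^{j}(1-\Pi_1)$ converge in operator norm, at the geometric rate $\rho_0^{k+1}(1-\rho_0)^{-1}$, to the operator $S_\infty:=P-PQ\,A\,(1-P)$; in particular $S_kh\to S_\infty h$ for every $h\in\mathcal{H}$, and the only remaining point is to identify $S_\infty h$ with $(\Pi h)_1$.

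For that identification I would use the normal equations of the decomposition. Since $\rho_0<1$, Lemma \ref{angleequiv}(ii) shows that $\mathcal{H}_1+\mathcal{H}_2$ is closed and $\mathcal{H}_1\cap\mathcal{H}_2=\{0\}$, so the decomposition $\Pi h=g_1+g_2$ with $g_j\in\mathcal{H}_j$ is well defined, and by the projection theorem $h-g_1-g_2$ is orthogonal to both $\mathcal{H}_1$ and $\mathcal{H}_2$. Applying $P$ and $Q$ to these two relations, and using $Pg_1=g_1$ and $Qg_2=g_2$, one gets $g_1=Ph-Pg_2$ and $g_2=Qh-Qg_1$; eliminating $g_2$ yields $(I-PQ)g_1=Ph-PQh$, hence $g_1=A(Ph-PQh)$. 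It then remains to check the purely algebraic identity $S_\infty h=A(Ph-PQh)$: expanding $S_\infty h=Ph-PQAh+PQAPh$ and $A(Ph-PQh)=APh-APQh$, and using $A=I+PQA$ together with the commutation $PQ\,A=A\,PQ$, both sides reduce to the same expression. Together with the convergence established above this proves the lemma, and in fact gives the quantitative bound $\|(\Pi h)_1-S_kh\|\leq\rho_0^{k+1}(1-\rho_0)^{-1}\|h\|$.

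The routine parts are the operator-norm estimate and the Neumann-series bookkeeping. The step requiring most care is the identification of $S_\infty h$ with $(\Pi h)_1$: one must make sure that the two orthogonality conditions genuinely pin down the decomposition $\Pi h=g_1+g_2$ (this is where the uniqueness $\mathcal{H}_1\cap\mathcal{H}_2=\{0\}$, a consequence of $\rho_0<1$, is used) and that the formal manipulations with the norm-convergent series $A$ — that $A=(I-PQ)^{-1}$ and that $A$ commutes with $PQ$ — are legitimate, which they are precisely because $\|PQ\|\leq\rho_0<1$. I do not anticipate a genuine obstacle; the only delicate point is keeping the projection bookkeeping consistent (for instance $Pg_1=g_1$, $Qg_2=g_2$, and the identity $A-I-PQA=0$) when deriving $g_1=A(Ph-PQh)$ and collapsing the algebraic identity.
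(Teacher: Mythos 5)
Your proof is correct. The one point to be careful about checks out: $\|PQ\|\leq\rho_0$ follows exactly as you say from $\|Pu\|^2=\langle Pu,u\rangle\leq\rho_0\|Pu\|\|u\|$ for $u\in\mathcal{H}_2$, the normal equations $g_1=Ph-Pg_2$, $g_2=Qh-Qg_1$ do pin down $g_1=(I-PQ)^{-1}(P-PQ)h$ (uniqueness of the splitting coming from $\mathcal{H}_1\cap\mathcal{H}_2=\{0\}$, and closedness of $\mathcal{H}_1+\mathcal{H}_2$ from Lemma \ref{angleequiv}(ii) via a short Cauchy-sequence argument you could have spelled out), and the algebraic identity holds: using $PQA=APQ=A-I$, both $P-PQA+PQAP$ and $AP-APQ$ collapse to $I-A+AP$. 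Where you differ from the paper is that the paper does not prove the lemma at all: it quotes it as a consequence of von Neumann's theorem on projections onto sumspaces, in the form recorded in Aronszajn [(15), p.~378] and in Bickel--Klaassen--Ritov--Wellner (Theorem 2.C, Appendix A.4). Your route is a self-contained two-page argument (Neumann series plus normal equations) that exploits the standing hypothesis $\rho_0<1$, and it buys something the citation does not state explicitly, namely operator-norm convergence of the partial sums with the explicit geometric bound $\|(\Pi h)_1-S_kh\|\leq\rho_0^{k+1}(1-\rho_0)^{-1}\|h\|$ (which could even be sharpened to order $\rho_0^{2k+1}$ by estimating $\|(PQ)^j(1-P)\|$ as the paper later does in its proof of Proposition \ref{backf1}); the price is that it is tied to the angle condition, whereas the classical alternating-projection results cited by the paper hold in greater generality.
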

\begin{remark}
If we set $h_1^{(1)}=\Pi_1h$ and proceed iteratively by setting $h_2^{(m)}=\Pi_2(h-h_1^{(m)})$ and $h_1^{(m+1)}=\Pi_1(h-h_2^{(m)})$, $m\geq 1$, then Lemma \ref{Arvn} can be rewritten as $\|(\Pi h)_1-h_1^{(m)}\|\rightarrow 0$. This procedure is often called ``backfitting''.
\end{remark}
In this section, we apply Lemma \ref{Arvn} to the finite sample setting, using results from the previous section. Recall that $\hat{\Pi}_{V}$ is the orthogonal projection from $\mathbb{R}^n$ to the subspace $\lbrace (g(X^1),\dots,g(X^n))^T|g\in V\rbrace$, and that $\hat{\Pi}_{V_1}$, $\hat{\Pi}_{V_2}$, and $\hat{\Pi}_{W_1}$ are defined analogously (replace $V$ by $V_1$, $V_2$, and $W_1$, respectively). We first prove:
\begin{proposition}\label{backf1} Let Assumption \ref{compass} and \ref{angle} be satisfied. Let
\[\rho_{0,\delta}=1-\frac{(1-\delta)}{(1+\delta)}(1-\rho_0).\]
If $\mathcal{E}_\delta$ holds, then we have
\begin{align*}
 \mathbb{E}\left[ \Vert \hat{\Pi}_{W_1}(\hat{\Pi}_V\boldsymbol{\epsilon})_1\Vert_n^2|X^1,\dots,X^n\right]\leq\frac{\sigma^2\dim W_1}{n}+\frac{1}{1-\rho_{0,\delta}^2}\frac{\sigma^2\operatorname{tr}(\hat{\Pi}_{W_1}\hat{\Pi}_{V_2})}{n}.\\
\end{align*}
\end{proposition}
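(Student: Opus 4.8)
The plan is to reduce the finite-sample statement to a direct application of von Neumann's theorem (Lemma \ref{Arvn}) in the Euclidean space $\mathbb{R}^n$ equipped with the inner product $\langle u,v\rangle_n = n^{-1}\sum_i u_i v_i$, working on the event $\mathcal{E}_\delta$. On $\mathcal{E}_\delta$ the empirical norm is equivalent to the $L^2(\mathbb{P}^X)$-norm on $V$, so the subspaces $\hat{V}_1, \hat{V}_2$ of $\mathbb{R}^n$ (the images of $V_1$, $V_2$ under evaluation at the sample) are genuine subspaces with $\hat{V}_1 \cap \hat{V}_2 = \{0\}$, and by Proposition \ref{empangle} the cosine of the minimal angle between $\hat{V}_1$ and $\hat{V}_2$ (with respect to $\langle\cdot,\cdot\rangle_n$) is at most $\rho_{0,\delta} < 1$. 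Thus Lemma \ref{Arvn} applies with $\mathcal{H}=\mathbb{R}^n$, $\mathcal{H}_1=\hat{V}_1$, $\mathcal{H}_2=\hat{V}_2$: writing $\hat{\Pi}_{V_1}$, $\hat{\Pi}_{V_2}$ for the empirical orthogonal projections, the first component $(\hat{\Pi}_V\boldsymbol{\epsilon})_1$ of the projection of the noise vector equals the limit of the backfitting iterates, namely
\[
(\hat{\Pi}_V\boldsymbol{\epsilon})_1 = \Big(\hat{\Pi}_{V_1} - \sum_{j=1}^{\infty}(\hat{\Pi}_{V_1}\hat{\Pi}_{V_2})^j(1-\hat{\Pi}_{V_1})\Big)\boldsymbol{\epsilon},
\]
the series converging in $\|\cdot\|_n$ (and hence in the usual Euclidean norm).

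Next I would take conditional expectations. Conditionally on $X^1,\dots,X^n$, the vector $\boldsymbol{\epsilon}$ has mean zero and covariance $\sigma^2 I_n$ in the \emph{standard} inner product; rescaling, with respect to $\langle\cdot,\cdot\rangle_n$ its "covariance operator" is $\sigma^2 n I_n$ in a sense that makes $\mathbb{E}[\langle A\boldsymbol{\epsilon}, A\boldsymbol{\epsilon}\rangle_n \mid X] = (\sigma^2/n)\,\mathrm{tr}_n(A^*A)$ for any linear $A$ on $\mathbb{R}^n$, where $\mathrm{tr}_n$ and $A^*$ are trace and adjoint relative to $\langle\cdot,\cdot\rangle_n$. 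Applying this with $A = \hat{\Pi}_{W_1}(\hat{\Pi}_V(\cdot))_1$ gives
\[
\mathbb{E}\big[\|\hat{\Pi}_{W_1}(\hat{\Pi}_V\boldsymbol{\epsilon})_1\|_n^2 \,\big|\, X\big] = \frac{\sigma^2}{n}\,\mathrm{tr}_n\big(A^*A\big).
\]
The core of the argument is then to bound $\mathrm{tr}_n(A^*A)$. Since $W_1 \subseteq V_1 \subseteq V$, on $\mathcal{E}_\delta$ we have $\hat{\Pi}_{W_1}\hat{\Pi}_{V_1} = \hat{\Pi}_{W_1}$ and $\hat{\Pi}_{W_1}(1-\hat{\Pi}_{V_1}) = 0$, so in the von Neumann series only the $j=0$ term and the cross terms involving $\hat{\Pi}_{V_2}$ survive after composing with $\hat{\Pi}_{W_1}$; more precisely $A = \hat{\Pi}_{W_1} - \hat{\Pi}_{W_1}\sum_{j\ge 1}(\hat{\Pi}_{V_1}\hat{\Pi}_{V_2})^j(1-\hat{\Pi}_{V_1})$. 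I would split $\|A\boldsymbol{\epsilon}\|_n^2 \le (\|\hat{\Pi}_{W_1}\boldsymbol\epsilon\|_n + \|(\text{remainder})\boldsymbol\epsilon\|_n)^2$ — or, better, since $\hat{\Pi}_{W_1}\boldsymbol\epsilon$ is orthogonal (in $\langle\cdot,\cdot\rangle_n$) to nothing obvious, I would instead directly estimate the Hilbert-Schmidt-type trace. The cleanest route: bound $\mathrm{tr}_n(A^*A) \le \dim W_1 + (\text{contribution of the tail})$, and show the tail contributes at most $\frac{1}{1-\rho_{0,\delta}^2}\,\mathrm{tr}_n(\hat{\Pi}_{W_1}\hat{\Pi}_{V_2})$ by summing the geometric-type series $\sum_{j\ge 1}\rho_{0,\delta}^{2(j-1)}$, using that $\|\hat{\Pi}_{V_1}\hat{\Pi}_{V_2}\|_{\mathrm{op},n} \le \rho_{0,\delta}$ from Proposition \ref{empangle}, together with the elementary fact $\mathrm{tr}_n(B^*C^*CB)\le \|B\|_{\mathrm{op},n}^2\,\mathrm{tr}_n(C^*C)$ applied repeatedly and the identity $\mathrm{tr}_n(\hat{\Pi}_{W_1}\hat{\Pi}_{V_2}\hat{\Pi}_{W_1}) = \mathrm{tr}_n(\hat{\Pi}_{W_1}\hat{\Pi}_{V_2})$.

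The main obstacle I anticipate is the bookkeeping that turns the infinite von Neumann series for the noise into the stated clean two-term bound — in particular, identifying which compositions collapse because $W_1\subseteq V_1$, and organizing the remaining geometric sum so that the factor $(1-\rho_{0,\delta}^2)^{-1}$ and precisely the trace $\mathrm{tr}_n(\hat{\Pi}_{W_1}\hat{\Pi}_{V_2})$ (rather than a cruder $\rho_{0,\delta}^2\dim W_1$ bound) emerge. Some care is also needed because $\hat{\Pi}_{W_1}$, $\hat{\Pi}_{V_1}$, $\hat{\Pi}_{V_2}$ are self-adjoint with respect to $\langle\cdot,\cdot\rangle_n$ but not with respect to the standard inner product, so every manipulation (adjoints, operator norms, traces, the conditional second-moment identity for $\boldsymbol\epsilon$) must be phrased consistently in the $\langle\cdot,\cdot\rangle_n$ geometry; the equivalence of norms on $V$ provided by $\mathcal{E}_\delta$ guarantees everything is well-defined, and outside $V$ the relevant operators act only through their restriction to $\hat V$, so no issue arises. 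Once the trace bound is in hand, taking the conditional expectation as above and recalling $\mathrm{tr}_n(\hat{\Pi}_{W_1}) = \dim W_1$ finishes the proof.
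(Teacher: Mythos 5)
Your overall skeleton matches the paper's: on $\mathcal{E}_\delta$ you apply Lemma \ref{Arvn} in $(\mathbb{R}^n,\langle\cdot,\cdot\rangle_n)$ with the empirical angle bound \eqref{eq:backfcond}, reduce the conditional second moment to $\tfrac{\sigma^2}{n}\operatorname{tr}(A^TA)$, use $W_1\subseteq V_1$, and aim for a geometric series in $\rho_{0,\delta}^2$ times $\operatorname{tr}(\hat{\Pi}_{W_1}\hat{\Pi}_{V_2})$. But the decisive step is exactly the one you defer to ``bookkeeping,'' and the tools you name do not deliver it. Writing $A=\hat{\Pi}_{W_1}-\hat{\Pi}_{W_1}\sum_{j\geq 1}(\hat{\Pi}_{V_1}\hat{\Pi}_{V_2})^j(1-\hat{\Pi}_{V_1})=\hat{\Pi}_{W_1}+T$, the cross terms $\operatorname{tr}(\hat{\Pi}_{W_1}T)$ do vanish (since $(1-\hat{\Pi}_{V_1})\hat{\Pi}_{W_1}=0$), but $\operatorname{tr}(T^TT)$ is a \emph{double} sum over powers $j,l\geq 1$, and the off-diagonal terms $j\neq l$ do not disappear. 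If you bound them as you propose — pull out operator norms of the tail via $\operatorname{tr}(B^TC^TCB)\leq\|B\|_{\operatorname{op}}^2\operatorname{tr}(C^TC)$ and the contraction $\|(\hat{\Pi}_{V_1}\hat{\Pi}_{V_2})^j(1-\hat{\Pi}_{V_1})\|_{\operatorname{op}}\leq\rho_{0,\delta}^{2j-1}$ — you end up with a factor of order $\bigl(1+\rho_{0,\delta}/(1-\rho_{0,\delta}^2)\bigr)^2$, i.e.\ roughly $(1-\rho_{0,\delta}^2)^{-2}$ as $\rho_{0,\delta}\to 1$, multiplying $\operatorname{tr}(\hat{\Pi}_{W_1}\hat{\Pi}_{V_2})$, not the claimed single factor $(1-\rho_{0,\delta}^2)^{-1}$. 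Summing ``the geometric-type series $\sum_{j\geq1}\rho_{0,\delta}^{2(j-1)}$'' presupposes that the trace has already been reduced to a single sum over $j$, which is precisely what is missing.

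The paper closes this gap with an exact algebraic rearrangement rather than norm estimates: it rewrites the truncated series as
\begin{equation*}
\hat{\Pi}_{V_1}-\sum_{j=1}^k(\hat{\Pi}_{V_1}\hat{\Pi}_{V_2})^j(1-\hat{\Pi}_{V_1})
=\sum_{j=0}^{k-1}\hat{\Pi}_{V_1}(\hat{\Pi}_{V_2}\hat{\Pi}_{V_1})^j(1-\hat{\Pi}_{V_2})+(\hat{\Pi}_{V_1}\hat{\Pi}_{V_2})^k\hat{\Pi}_{V_1},
\end{equation*}
so that when the operator is multiplied by its transpose and sandwiched by $\hat{\Pi}_{W_1}$, the identities $\hat{\Pi}_{V_1}(1-\hat{\Pi}_{V_1})=0$ and $\hat{\Pi}_{V_2}(1-\hat{\Pi}_{V_2})=0$ annihilate all cross terms and leave exactly $\hat{\Pi}_{W_1}\sum_{j=0}^{k}(\hat{\Pi}_{V_1}\hat{\Pi}_{V_2})^j\hat{\Pi}_{W_1}$ minus a nonessential correction; only then do cyclicity of the trace and Lemma \ref{trlem} (iii), with $\|\hat{\Pi}_{V_2}\hat{\Pi}_{V_1}\hat{\Pi}_{V_2}\|_{\operatorname{op}}\leq\rho_{0,\delta}^2$, yield the single sum $\dim W_1+\sum_{j\geq1}\rho_{0,\delta}^{2(j-1)}\operatorname{tr}(\hat{\Pi}_{W_1}\hat{\Pi}_{V_2})$ with the stated constant. (The paper also works with finite truncations plus an error term controlled by $\rho_{0,\delta}^{2k+1}(1-\rho_{0,\delta}^2)^{-1}\|\boldsymbol{\epsilon}\|_n$ before letting $k\to\infty$; your direct use of the infinite series is harmless in finite dimensions, and your remark that the projections fail to be self-adjoint for the standard inner product is immaterial since $\langle\cdot,\cdot\rangle_n$ is a scalar multiple of it. The real gap is the missing rearrangement identity.)
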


In the proof, we will need the following result:
\begin{lemma}\label{trlem} Let $A\in\mathbb{R}^{k_1\times k_2}$ and $B\in\mathbb{R}^{k_2\times k_1}$. Then
\begin{itemize}
\item[(i)] $\operatorname{tr}(AB)=\operatorname{tr}(BA)$.
\item[(ii)]  $|\operatorname{tr}(AB)|\leq \sqrt{\operatorname{tr}(AA^T)\operatorname{tr}(BB^T)}$.
\item[(iii)] Let $k_1=k_2$ and $B$ be symmetric and positive semi-definite. Then \[|\operatorname{tr}(AB)|\leq \Vert A\Vert_{\operatorname{op}}\operatorname{tr}(B),\]
where $\Vert A\Vert_{\operatorname{op}}=\sup_{\Vert x\Vert_2=1}\Vert Ax\Vert_2$ denotes the operator norm. Here, $\Vert\cdot\Vert_2$ denotes the Euclidean norm. 
\end{itemize}
\end{lemma}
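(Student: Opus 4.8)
The plan is to establish the three assertions by elementary linear algebra, in the order stated, each part using only a short entrywise computation or the spectral theorem, and with later parts invoking earlier ones.

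For (i) I would simply expand the traces in terms of matrix entries. Writing $A=(a_{ij})$ and $B=(b_{ij})$, one has $\operatorname{tr}(AB)=\sum_{i=1}^{k_1}\sum_{j=1}^{k_2}a_{ij}b_{ji}$, and the same double sum equals $\sum_{j=1}^{k_2}\sum_{i=1}^{k_1}b_{ji}a_{ij}=\operatorname{tr}(BA)$; the rearrangement is legitimate since the sum is finite. This part is immediate.

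For (ii) the idea is to recognize $\operatorname{tr}(AB)$ as a Frobenius (Hilbert--Schmidt) inner product and apply the Cauchy--Schwarz inequality. Equip $\mathbb{R}^{k_1\times k_2}$ with the inner product $\langle M,N\rangle_F=\operatorname{tr}(MN^T)=\sum_{i,j}M_{ij}N_{ij}$ and the associated norm $\|M\|_F=\sqrt{\langle M,M\rangle_F}$. Then $\operatorname{tr}(AB)=\operatorname{tr}(A(B^T)^T)=\langle A,B^T\rangle_F$, so Cauchy--Schwarz gives $|\operatorname{tr}(AB)|\le\|A\|_F\|B^T\|_F$. Since $\|A\|_F^2=\operatorname{tr}(AA^T)$ and $\|B^T\|_F^2=\operatorname{tr}(B^TB)=\operatorname{tr}(BB^T)$, where the last equality uses part (i), this is exactly the claimed bound.

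For (iii) the plan is to diagonalize $B$. Because $B$ is symmetric and positive semi-definite, the spectral theorem yields an orthonormal basis $v_1,\dots,v_{k_1}$ of $\mathbb{R}^{k_1}$ with $Bv_k=\lambda_k v_k$, $\lambda_k\ge 0$, so that $B=\sum_{k}\lambda_k v_k v_k^T$ and $\operatorname{tr}(B)=\sum_k\lambda_k$. By linearity of the trace and the identity $\operatorname{tr}(Av_k v_k^T)=\operatorname{tr}(v_k^T A v_k)=v_k^T A v_k$ (part (i)), we get $\operatorname{tr}(AB)=\sum_k\lambda_k v_k^T A v_k$. Estimating each term via $|\lambda_k v_k^T A v_k|\le\lambda_k\|v_k\|_2\|Av_k\|_2\le\lambda_k\|A\|_{\operatorname{op}}$ and summing over $k$ gives $|\operatorname{tr}(AB)|\le\|A\|_{\operatorname{op}}\sum_k\lambda_k=\|A\|_{\operatorname{op}}\operatorname{tr}(B)$. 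None of the three steps is a genuine obstacle; the only point that goes beyond bookkeeping is (iii), where one must bring in the spectral decomposition of $B$ (alternatively, one could write $B=B^{1/2}B^{1/2}$ and combine (i) with a variant of (ii), but the diagonalization argument is the most transparent).
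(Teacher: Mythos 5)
Your proposal is correct, and for part (iii) it takes essentially the same route as the paper: diagonalize $B$ by the spectral theorem and bound the quadratic forms $v_k^T A v_k$ (equivalently, the diagonal entries of $A$ conjugated by the orthogonal eigenbasis) by $\Vert A\Vert_{\operatorname{op}}$ via Cauchy--Schwarz, then sum the nonnegative eigenvalues to get $\operatorname{tr}(B)$. Parts (i) and (ii), which the paper dismisses as standard, are handled correctly by your entrywise expansion and the Frobenius inner-product Cauchy--Schwarz argument.
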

For completeness, a proof of Lemma \ref{trlem} (iii) is given in Appendix \ref{app2h}.
\begin{proof}[Proof of Proposition \ref{backf1}.]
Throughout the proof, suppose that $\mathcal{E}_\delta$ holds. Furthermore, we consider $V$ as a subset of $\mathbb{R}^n$. This is no 
restriction, since (on $\mathcal{E}_\delta$) each element $g\in V$ is uniquely determined by $(g(X^1),\dots,g(X^n))^T$. 
From \eqref{eq:backfcond} and Lemma \ref{Arvn} applied to $(\mathcal{H},\langle \cdot,\cdot\rangle)=(V,\langle \cdot,\cdot\rangle_n)$ and $\mathcal{H}_j=V_j$, $j=1,2$, we have 
\begin{equation*}
\big\Vert (\hat{\Pi}_V\boldsymbol{\epsilon})_1-\big(\hat{\Pi}_{V_1}-\sum_{j=1}^k(\hat{\Pi}_{V_1}\hat{\Pi}_{V_2})^j(1-\hat{\Pi}_{V_1})\big)\boldsymbol{\epsilon}\big\Vert_n \rightarrow 0
\end{equation*}
as $k$ goes to infinity. 
From \eqref{eq:backfcond}, we have 
\begin{equation}\label{eq:pa1}
\Vert\hat{\Pi}_{V_1}g_2\Vert_n\leq\rho_{0,\delta}\Vert g_2\Vert_n
\end{equation}
 for all $g_2\in V_2$, which follows from 
\[\Vert\hat{\Pi}_{V_1}g_2\Vert_n^2=\langle \hat{\Pi}_{V_1}g_2,\hat{\Pi}_{V_1}g_2\rangle_n=\langle \hat{\Pi}_{V_1}g_2,g_2\rangle_n\leq \rho_{0,\delta}\Vert\hat{\Pi}_{V_1}g_2\Vert_n\Vert g_2\Vert_n.\] 
Similarly, we have 
\begin{equation}\label{eq:pa2}
\Vert\hat{\Pi}_{V_2}g_1\Vert_n\leq\rho_{0,\delta}\Vert g_1\Vert_n
\end{equation}
for all $g_1\in V_1$. 
This gives the improved convergence result
\begin{align*}
&\big\Vert (\hat{\Pi}_V\boldsymbol{\epsilon})_1-\big(\hat{\Pi}_{V_1}-\sum_{j=1}^k(\hat{\Pi}_{V_1}\hat{\Pi}_{V_2})^j(1-\hat{\Pi}_{V_1})\big)\boldsymbol{\epsilon}\big\Vert_n \\
&\leq\sum_{j=k+1}^\infty\Vert (\hat{\Pi}_{V_1}\hat{\Pi}_{V_2})^j(1-\hat{\Pi}_{V_1})\boldsymbol{\epsilon}\Vert_n\\
&\leq\sum_{j=k+1}^\infty\rho_{0,\delta}^{2j-1}\Vert \boldsymbol{\epsilon}\Vert_n\\&=\frac{\rho_{0,\delta}^{2k+1}}{1-\rho_{0,\delta}^2} \Vert \boldsymbol{\epsilon}\Vert_n,
\end{align*}
and also
\begin{align}
&\big\Vert \hat{\Pi}_{W_1}(\hat{\Pi}_V\boldsymbol{\epsilon})_1-\hat{\Pi}_{W_1}\big(\hat{\Pi}_{V_1}-\sum_{j=1}^k(\hat{\Pi}_{V_1}\hat{\Pi}_{V_2})^j(1-\hat{\Pi}_{V_1})\big)\boldsymbol{\epsilon}\big\Vert_n\nonumber\\
&\leq\frac{\rho_{0,\delta}^{2k+1}}{1-\rho_{0,\delta}^2} \Vert \boldsymbol{\epsilon}\Vert_n.\label{eq:awr}
\end{align}
Applying \eqref{eq:awr} and the bound $(x+y)^2\leq(1+\epsilon)x^2+(1+1/\epsilon)y^2$, $\epsilon>0$, and then taking expectation, we obtain
\begin{align}
& \mathbb{E}\left[ \Vert \hat{\Pi}_{W_1}(\hat{\Pi}_V\boldsymbol{\epsilon})_1\Vert_n^2\ |\ X^1,\dots,X^n\right]\nonumber \\
&\leq (1+\epsilon)\mathbb{E}\left[  \big\Vert\hat{\Pi}_{W_1}\big(\hat{\Pi}_{V_1}-\sum_{j=1}^k(\hat{\Pi}_{V_1}\hat{\Pi}_{V_2})^j(1-\hat{\Pi}_{V_1})\big)\boldsymbol{\epsilon}\big\Vert_n^2\ \big|\ X^1,\dots,X^n\right]\nonumber \\
 &+\left(1+1/\epsilon\right)\frac{\rho_{0,\delta}^{4k+2}}{(1-\rho_{0,\delta}^2)^2}\sigma^2 \label{eq:prere}.
\end{align}
Since $\mathbb{E}\left[ \Vert A\boldsymbol{\epsilon}\Vert_n^2\right]=\sigma^2\operatorname{tr}(AA^T)/n$ for all $A\in\mathbb{R}^{n\times n}$, it remains to bound the trace of 
\begin{equation}\label{eq:trof}
 \hat{\Pi}_{W_1}\big(\hat{\Pi}_{V_1}-\sum_{j=1}^k(\hat{\Pi}_{V_1}\hat{\Pi}_{V_2})^j(1-\hat{\Pi}_{V_1})\big)\Big(\hat{\Pi}_{V_1}-\sum_{j=1}^k(\hat{\Pi}_{V_1}\hat{\Pi}_{V_2})^j(1-\hat{\Pi}_{V_1})\Big)^T\hat{\Pi}_{W_1}.
\end{equation}
Using 
\begin{align*}
&\hat{\Pi}_{V_1}-\sum_{j=1}^k(\hat{\Pi}_{V_1}\hat{\Pi}_{V_2})^j(1-\hat{\Pi}_{V_1})\\
&=\sum_{j=0}^{k-1}\hat{\Pi}_{V_1}(\hat{\Pi}_{V_2}\hat{\Pi}_{V_1})^j(1-\hat{\Pi}_{V_2})+(\hat{\Pi}_{V_1}\hat{\Pi}_{V_2})^k\hat{\Pi}_{V_1},
\end{align*}
\eqref{eq:trof} is equal to
\begin{align*}
 &\hat{\Pi}_{W_1}\sum_{j=0}^k(\hat{\Pi}_{V_1}\hat{\Pi}_{V_2})^j\hat{\Pi}_{V_1}\Big(\hat{\Pi}_{V_1}-(1-\hat{\Pi}_{V_1})\sum_{j=1}^k(\hat{\Pi}_{V_2}\hat{\Pi}_{V_1})^j\Big)\hat{\Pi}_{W_1}\\
 -&\hat{\Pi}_{W_1}\sum_{j=1}^k(\hat{\Pi}_{V_1}\hat{\Pi}_{V_2})^j\Big((1-\hat{\Pi}_{V_2})\sum_{j=0}^{k-1}(\hat{\Pi}_{V_1}\hat{\Pi}_{V_2})^j\hat{\Pi}_{V_1}+(\hat{\Pi}_{V_1}\hat{\Pi}_{V_2})^k\hat{\Pi}_{V_1}\Big)\hat{\Pi}_{W_1}
\end{align*}
and, since $\hat{\Pi}_{V_1}(1-\hat{\Pi}_{V_1})=0$ and $\hat{\Pi}_{V_2}(1-\hat{\Pi}_{V_2})=0$, this is equal to
\begin{equation*}
 \hat{\Pi}_{W_1}\sum_{j=0}^k(\hat{\Pi}_{V_1}\hat{\Pi}_{V_2})^j\hat{\Pi}_{W_1}-\hat{\Pi}_{W_1}\sum_{j=1}^k(\hat{\Pi}_{V_1}\hat{\Pi}_{V_2})^{j+k}\hat{\Pi}_{W_1}.
\end{equation*}
By Lemma \ref{trlem} (i) and the identities $\hat{\Pi}_{W_1}\hat{\Pi}_{V_1}=\hat{\Pi}_{W_1}$ and $\hat{\Pi}_{V_2}\hat{\Pi}_{V_2}=\hat{\Pi}_{V_2}$, we have for $j=1,\dots,2k$,
\begin{align*}
\operatorname{tr}(\hat{\Pi}_{W_1}(\hat{\Pi}_{V_1}\hat{\Pi}_{V_2})^j\hat{\Pi}_{W_1})
&=\operatorname{tr}((\hat{\Pi}_{V_2}\hat{\Pi}_{V_1})^{j-1}\hat{\Pi}_{V_2}\hat{\Pi}_{W_1}\hat{\Pi}_{V_2})\\
&=\operatorname{tr}((\hat{\Pi}_{V_2}\hat{\Pi}_{V_1}\hat{\Pi}_{V_2})^{j-1}\hat{\Pi}_{V_2}\hat{\Pi}_{W_1}\hat{\Pi}_{V_2}).
\end{align*}
Thus the trace of \eqref{eq:trof} is bounded by
\begin{equation}\label{eq:ikl}
\dim W_1+\sum_{j=1}^{2k}|\operatorname{tr}((\hat{\Pi}_{V_2}\hat{\Pi}_{V_1}\hat{\Pi}_{V_2})^{j-1}\hat{\Pi}_{V_2}\hat{\Pi}_{W_1}\hat{\Pi}_{V_2})|.
\end{equation}
Applying Lemma \ref{trlem} (iii), this can be bounded by
\begin{equation*}
\dim W_1+\sum_{j=0}^{2k-1}\Vert\hat{\Pi}_{V_2}\hat{\Pi}_{V_1}\hat{\Pi}_{V_2} \Vert_{\operatorname{op}}^j\operatorname{tr}(\hat{\Pi}_{V_2}\hat{\Pi}_{W_1}\hat{\Pi}_{V_2}).
\end{equation*}
By \eqref{eq:pa1} and \eqref{eq:pa2}, we have
\begin{equation}\label{eq:pacor}
\Vert \hat{\Pi}_{V_2}\hat{\Pi}_{V_1}\hat{\Pi}_{V_2}\Vert_{\operatorname{op}}\leq \rho^2_{0,\delta}.
\end{equation}
Moreover, we have $\operatorname{tr}(\hat{\Pi}_{V_2}\hat{\Pi}_{W_1}\hat{\Pi}_{V_2})=\operatorname{tr}(\hat{\Pi}_{W_1}\hat{\Pi}_{V_2})$.
Thus \eqref{eq:ikl} is bounded by
\begin{equation}\label{eq:prere2}
\dim W_1+\frac{1}{1-\rho_{0,\delta}^2}\operatorname{tr}(\hat{\Pi}_{W_1}\hat{\Pi}_{V_2}).
\end{equation}
From \eqref{eq:prere}-\eqref{eq:prere2}, we conclude that 
\begin{align*}
 &\mathbb{E}\left[ \Vert \hat{\Pi}_{W_1}(\hat{\Pi}_V\boldsymbol{\epsilon})_1\Vert_n^2|X^1,\dots,X^n\right]\\
 &\leq(1+\epsilon)\left(\frac{\sigma^2\dim W_1}{n}+\frac{1}{1-\rho_{0,\delta}^2}\frac{\sigma^2\operatorname{tr}(\hat{\Pi}_{W_1}\hat{\Pi}_{V_2})}{n}\right)\\
 &+\left(1+1/\epsilon\right)\frac{\rho_{0,\delta}^{4k+2}}{(1-\rho_{0,\delta}^2)^2}\sigma^2. 
\end{align*}
Now, send $k$ off to infinity first, and then let $\epsilon$ go to zero. This completes the proof.
\end{proof}
From Proposition \ref{backf1}, we obtain a first upper bound for the variance term, which does not depend on the dimension of $V_2$. Note that in Proposition \ref{backf2} and Corollary \ref{corbackf2}, we show that this upper bound can be further refined.
\begin{corollary}\label{corbackf}  Let Assumption \ref{compass} and \ref{angle} be satisfied. If $\mathcal{E}_\delta$ holds, then we have
\[
 \mathbb{E}\left[ \Vert \hat{\Pi}_{W_1}(\hat{\Pi}_V\boldsymbol{\epsilon})_1\Vert_n^2|X^1,\dots,X^n\right]\leq \frac{1+\delta}{1-\delta}\frac{1}{1-\rho_0^2}\frac{\sigma^2\dim W_1}{n}.
\]
\end{corollary}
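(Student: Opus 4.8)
The plan is to combine Proposition \ref{backf1} with the dimension-dependent estimate $\operatorname{tr}(\hat{\Pi}_{W_1}\hat{\Pi}_{V_2})\leq\rho_{0,\delta}^2\dim W_1$ and then compare $\rho_{0,\delta}$ with $\rho_0$ by an elementary computation. Throughout I would work on $\mathcal{E}_\delta$, where the projections involved are well defined and, since $\langle\cdot,\cdot\rangle_n$ is a positive multiple of the Euclidean inner product on $\mathbb{R}^n$, all the $\hat{\Pi}$'s are simultaneously orthogonal projections for both structures; in particular they are symmetric idempotents and ratios of norms are unaffected by the scaling.

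\textbf{Step 1 (bounding the trace).} First I would use cyclicity of the trace and idempotency to write $\operatorname{tr}(\hat{\Pi}_{W_1}\hat{\Pi}_{V_2})=\operatorname{tr}(\hat{\Pi}_{W_1}\hat{\Pi}_{V_2}\hat{\Pi}_{W_1})$. Choosing a $\langle\cdot,\cdot\rangle_n$-orthonormal basis $\hat{\phi}_1,\dots,\hat{\phi}_{\dim W_1}$ of the subspace $\{(g(X^1),\dots,g(X^n))^T\mid g\in W_1\}$ of $\mathbb{R}^n$, this becomes
\[
\operatorname{tr}(\hat{\Pi}_{W_1}\hat{\Pi}_{V_2}\hat{\Pi}_{W_1})=\sum_{k=1}^{\dim W_1}\langle\hat{\Pi}_{V_2}\hat{\phi}_k,\hat{\phi}_k\rangle_n=\sum_{k=1}^{\dim W_1}\|\hat{\Pi}_{V_2}\hat{\phi}_k\|_n^2 .
\]
Since $W_1\subseteq V_1$, each $\hat{\phi}_k$ represents an element of $V_1$, so \eqref{eq:pa2} gives $\|\hat{\Pi}_{V_2}\hat{\phi}_k\|_n\leq\rho_{0,\delta}\|\hat{\phi}_k\|_n=\rho_{0,\delta}$, whence $\operatorname{tr}(\hat{\Pi}_{W_1}\hat{\Pi}_{V_2})\leq\rho_{0,\delta}^2\dim W_1$. (Alternatively, staying closer to the computation in \eqref{eq:ikl}--\eqref{eq:prere2}, one may write $\operatorname{tr}(\hat{\Pi}_{W_1}\hat{\Pi}_{V_2})=\operatorname{tr}((\hat{\Pi}_{V_1}\hat{\Pi}_{V_2}\hat{\Pi}_{V_1})\hat{\Pi}_{W_1})$ using $\hat{\Pi}_{W_1}\hat{\Pi}_{V_1}=\hat{\Pi}_{W_1}$, and then apply Lemma \ref{trlem}(iii) together with $\|\hat{\Pi}_{V_1}\hat{\Pi}_{V_2}\hat{\Pi}_{V_1}\|_{\operatorname{op}}\leq\rho_{0,\delta}^2$, which follows from \eqref{eq:pa1}--\eqref{eq:pa2} exactly as in \eqref{eq:pacor}.)

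\textbf{Step 2 (assembling and de-perturbing $\rho_{0,\delta}$).} Plugging the trace bound into Proposition \ref{backf1} and using $1+\rho_{0,\delta}^2/(1-\rho_{0,\delta}^2)=1/(1-\rho_{0,\delta}^2)$, I get
\[
\mathbb{E}\left[\Vert\hat{\Pi}_{W_1}(\hat{\Pi}_V\boldsymbol{\epsilon})_1\Vert_n^2\mid X^1,\dots,X^n\right]\leq\frac{1}{1-\rho_{0,\delta}^2}\frac{\sigma^2\dim W_1}{n}.
\]
It then remains to replace $\rho_{0,\delta}$ by $\rho_0$. From the definition $1-\rho_{0,\delta}=\tfrac{1-\delta}{1+\delta}(1-\rho_0)$ and $\tfrac{1-\delta}{1+\delta}<1$ one sees $\rho_{0,\delta}\geq\rho_0$, hence $1+\rho_{0,\delta}\geq1+\rho_0$; multiplying the two inequalities gives $1-\rho_{0,\delta}^2\geq\tfrac{1-\delta}{1+\delta}(1-\rho_0^2)$, i.e. $\tfrac{1}{1-\rho_{0,\delta}^2}\leq\tfrac{1+\delta}{1-\delta}\tfrac{1}{1-\rho_0^2}$, which is exactly the asserted bound.

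\textbf{Main difficulty.} There is no serious obstacle here; the corollary is essentially a clean-up of Proposition \ref{backf1}. The only point requiring some care is the bookkeeping around the two inner products on $\mathbb{R}^n$ and the fact that the matrices $\hat{\Pi}_{W_1},\hat{\Pi}_{V_2}$ are orthogonal projections for the empirical structure, which is what legitimises the identity $\operatorname{tr}(\hat{\Pi}_{W_1}\hat{\Pi}_{V_2})=\sum_k\|\hat{\Pi}_{V_2}\hat{\phi}_k\|_n^2$ and the subsequent use of \eqref{eq:pa2}; once this is set up correctly, everything else is routine algebra.
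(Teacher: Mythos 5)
Your proposal is correct and takes essentially the same route as the paper: it combines Proposition \ref{backf1} with the trace bound $\operatorname{tr}(\hat{\Pi}_{W_1}\hat{\Pi}_{V_2})\leq\rho_{0,\delta}^2\dim W_1$ and the comparison $\frac{1}{1-\rho_{0,\delta}^2}\leq\frac{1+\delta}{1-\delta}\frac{1}{1-\rho_0^2}$ (the paper's \eqref{eq:prere3}). The only cosmetic differences are that the paper derives the trace bound via Lemma \ref{trlem} (iii) (your stated alternative) rather than by expanding in an empirical orthonormal basis and applying \eqref{eq:pa2}, and it proves the comparison through the inequality $(1-c(1-\varrho))^2\leq 1-c(1-\varrho^2)$ by a derivative argument, whereas you factor $1-\rho_{0,\delta}^2=(1-\rho_{0,\delta})(1+\rho_{0,\delta})$ directly; both variants are fine.
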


\begin{proof}
By Lemma \ref{trlem} (i) and (ii), we have \[\operatorname{tr}(\hat{\Pi}_{W_1}\hat{\Pi}_{V_2})=\operatorname{tr}(\hat{\Pi}_{W_1}\hat{\Pi}_{V_2}\hat{\Pi}_{W_1}\hat{\Pi}_{W_1}).\] Applying Lemma \ref{trlem} (iii) and \eqref{eq:pacor}, we obtain on $\mathcal{E}_\delta$,
\[
\operatorname{tr}(\hat{\Pi}_{W_1}\hat{\Pi}_{V_2})\leq \Vert \hat{\Pi}_{W_1}\hat{\Pi}_{V_2}\hat{\Pi}_{W_1}\Vert_{\operatorname{op}}\operatorname{tr}(\hat{\Pi}_{W_1})\leq \rho_{0,\delta}^2\dim W_1.
\]
Thus, if $\mathcal{E}_\delta$ holds, Proposition \ref{backf1} yields
\[
\mathbb{E}\left[ \Vert \hat{\Pi}_{W_1}(\hat{\Pi}_V\boldsymbol{\epsilon})_1\Vert_n^2|X^1,\dots,X^n\right]\leq \frac{1}{1-\rho_{0,\delta}^2}\frac{\sigma^2\dim W_1}{n}.
\]
Since $(1-c(1-\varrho))^2\leq 1-c(1-\varrho^2)$ for $\varrho\in[0,1]$ and a constant $0\leq c\leq 1$ (both functions are equal to $1$ at the right endpoint $\varrho=1$ and the derivative of the left hand side is greater or equal than the derivative of the right hand side for all $\varrho\in[0,1]$), we obtain (set $c=(1-\delta)/(1+\delta)$ and $\varrho=\rho_0$)
\begin{equation}\label{eq:prere3}
\frac{1}{1-\rho_{0,\delta}^2}\leq\frac{1+\delta}{1-\delta}\frac{1}{1-\rho_0^2}.
\end{equation}
This completes the proof.
\end{proof}
\begin{proposition}\label{backf2} Let Assumption \ref{compass}, \ref{angle}, and \ref{assinfty} be satisfied. Then
\begin{equation*} 
 \frac{1}{n}\mathbb{E}\left[1_{\mathcal{E}_\delta}\operatorname{tr}(\hat{\Pi}_{W_1}\hat{\Pi}_{V_2})\right]\leq\frac{1}{(1-\delta)^2}\left(\frac{\left\|\Pi_{V_2}|_{W_1}\right\|^2_{HS}}{n}+\frac{\dim W_1}{n}\frac{\varphi^2d}{n}\right).
\end{equation*}
\end{proposition}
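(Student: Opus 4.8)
The plan is to reduce $\operatorname{tr}(\hat{\Pi}_{W_1}\hat{\Pi}_{V_2})$ to a double sum of empirical inner products of an $L^2(\mathbb{P}^X)$-orthonormal basis of $W_1$ against one of $V_2$, and then to observe that in expectation each such empirical inner product splits into a ``bias'' part, which assembles to the Hilbert--Schmidt norm $\|\Pi_{V_2}|_{W_1}\|_{HS}^2$, and a ``variance'' part, which is controlled by the sup-norm bound of Assumption \ref{assinfty}.

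First I would fix orthonormal bases $\{\phi_{1k}\}_{1\le k\le\dim W_1}$ of $W_1$ and $\{\psi_{2l}\}_{1\le l\le d_2}$ of $V_2$ in $L^2(\mathbb{P}^X)$, and let $\Phi$ and $\Psi$ be the matrices whose columns are the evaluation vectors $\boldsymbol{\phi}_{1k}=(\phi_{1k}(X^i))_i$ and $\boldsymbol{\psi}_{2l}=(\psi_{2l}(X^i))_i$. On $\mathcal{E}_\delta$ the norms $\|\cdot\|$ and $\|\cdot\|_n$ are equivalent on $V$, so these vectors are linearly independent, $\hat{\Pi}_{W_1}=\Phi(\Phi^T\Phi)^{-1}\Phi^T$, $\hat{\Pi}_{V_2}=\Psi(\Psi^T\Psi)^{-1}\Psi^T$, and $\Phi^T\Phi=nG$, $\Psi^T\Psi=nG_2$ with $G,G_2$ the empirical Gram matrices, which satisfy $G,G_2\succeq(1-\delta)I$. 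Using cyclicity of the trace (Lemma \ref{trlem}(i)) I would rewrite $\operatorname{tr}(\hat{\Pi}_{W_1}\hat{\Pi}_{V_2})=\operatorname{tr}((\Phi^T\Phi)^{-1}\Phi^T\hat{\Pi}_{V_2}\Phi)$; since $\Phi^T\hat{\Pi}_{V_2}\Phi=(\hat{\Pi}_{V_2}\Phi)^T(\hat{\Pi}_{V_2}\Phi)$ is positive semi-definite and $(\Phi^T\Phi)^{-1}\preceq(n(1-\delta))^{-1}I$ on $\mathcal{E}_\delta$, Lemma \ref{trlem}(iii) gives
\[\operatorname{tr}(\hat{\Pi}_{W_1}\hat{\Pi}_{V_2})\le\frac{1}{1-\delta}\sum_{k}\|\hat{\Pi}_{V_2}\phi_{1k}\|_n^2 .\]
Applying the same kind of estimate once more, now with $\hat{\Pi}_{V_2}=\Psi(\Psi^T\Psi)^{-1}\Psi^T$ and noting that $\Psi^T\boldsymbol{\phi}_{1k}$ has $l$-th entry $n\langle\psi_{2l},\phi_{1k}\rangle_n$, one gets $\|\hat{\Pi}_{V_2}\phi_{1k}\|_n^2\le(1-\delta)^{-1}\sum_l\langle\psi_{2l},\phi_{1k}\rangle_n^2$, hence on $\mathcal{E}_\delta$
\[\operatorname{tr}(\hat{\Pi}_{W_1}\hat{\Pi}_{V_2})\le\frac{1}{(1-\delta)^2}\sum_{k,l}\langle\psi_{2l},\phi_{1k}\rangle_n^2 .\]

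It then remains to take expectations. Bounding $1_{\mathcal{E}_\delta}\le1$ and using that $\langle\psi_{2l},\phi_{1k}\rangle_n$ is an average of i.i.d.\ copies of $\psi_{2l}(X)\phi_{1k}(X)$, I would apply the bias--variance identity $\mathbb{E}[\langle\psi_{2l},\phi_{1k}\rangle_n^2]=\langle\psi_{2l},\phi_{1k}\rangle^2+\tfrac1n\operatorname{Var}(\psi_{2l}(X)\phi_{1k}(X))\le\langle\psi_{2l},\phi_{1k}\rangle^2+\tfrac1n\mathbb{E}[\psi_{2l}(X)^2\phi_{1k}(X)^2]$. Summing over $l$ and then $k$: Parseval in $V_2$ gives $\sum_{k,l}\langle\psi_{2l},\phi_{1k}\rangle^2=\sum_k\|\Pi_{V_2}\phi_{1k}\|^2=\|\Pi_{V_2}|_{W_1}\|_{HS}^2$, while $\sum_{k,l}\mathbb{E}[\psi_{2l}(X)^2\phi_{1k}(X)^2]\le\|\sum_l\psi_{2l}^2\|_\infty\sum_k\|\phi_{1k}\|^2=(\dim W_1)\,\|\sum_l\psi_{2l}^2\|_\infty$, and exactly as in the proof of Theorem \ref{mfg}, Assumption \ref{assinfty} together with \cite[Lemma 1]{BM} applied to the orthonormal system $\{\psi_{2l}\}\subseteq V$ yields $\|\sum_l\psi_{2l}^2\|_\infty\le\varphi^2 d$. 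Collecting the two contributions and dividing by $n$ gives the asserted bound.

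The Gram-matrix bookkeeping is routine; the step that needs the right idea is the two-stage reduction of $\operatorname{tr}(\hat{\Pi}_{W_1}\hat{\Pi}_{V_2})$ to $\sum_{k,l}\langle\psi_{2l},\phi_{1k}\rangle_n^2$, in which the two distortions between the $L^2$- and the empirical geometries are each absorbed into a factor $(1-\delta)^{-1}$, together with the realization that $\langle\psi_{2l},\phi_{1k}\rangle_n$ concentrates around $\langle\psi_{2l},\phi_{1k}\rangle$ with fluctuation size governed by the $\sup$-norm bound of Assumption \ref{assinfty}; this is precisely what turns the dimension-free Hilbert--Schmidt quantity into the leading term and leaves only the lower-order remainder $(\dim W_1)\varphi^2 d/n^2$.
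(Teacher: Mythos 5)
Your proof is correct and follows essentially the same route as the paper: both reduce $\operatorname{tr}(\hat{\Pi}_{W_1}\hat{\Pi}_{V_2})$ on $\mathcal{E}_\delta$ to $(1-\delta)^{-2}\sum_{k,l}\langle\psi_{2l},\phi_{1k}\rangle_n^2$ and then take expectations via the bias--variance identity, identifying $\sum_{k,l}\langle\psi_{2l},\phi_{1k}\rangle^2=\|\Pi_{V_2}|_{W_1}\|_{HS}^2$ and bounding the variance part by $\dim W_1\,\varphi^2 d/n$ through the sup-norm bound of Assumption \ref{assinfty}. The only divergence is how the inverse empirical Gram matrices are handled: you use the direct spectral bound $\left(\tfrac1n Z_j^TZ_j\right)^{-1}\preceq (1-\delta)^{-1}I$ together with Lemma \ref{trlem}(iii), whereas the paper expands these inverses in a Neumann series and applies the trace Cauchy--Schwarz and operator-norm inequalities termwise before summing the geometric series --- your version is a mild simplification of that step and yields the same factor $(1-\delta)^{-2}$.
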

\begin{remark}
By considering $\Pi_{W_1}\Pi_{V_2}\Pi_{W_1}$ as a map from $W_1\subseteq L^2(\mathbb{P}^{X_1})$ to itself, we have $\left\|\Pi_{V_2}|_{W_1}\right\|^2_{HS}=\operatorname{tr}(\Pi_{W_1}\Pi_{V_2}\Pi_{W_1})$.
\end{remark}
\begin{proof}
Let $\left\{\phi_{1j}\right\}_{1\leq j \leq \dim W_1}$ be an orthonormal basis of $W_1$, and let $\left\{\phi_{2j}\right\}_{1\leq j \leq d_2}$ be an orthonormal basis of $V_2$. Let
\[
Z_1=(\phi_{1j}(X_{1}^i))_{1\leq i\leq n,1\leq j\leq \dim W_1}
\]
and
\[
Z_2=(\phi_{2j}(X_{2}^i))_{1\leq i\leq n,1\leq j\leq d_2},
\]
Now, suppose that $\mathcal{E}_\delta$ holds. Then we have $\hat{\Pi}_{W_1}=Z_1(Z_1^TZ_1)^{-1}Z_1^T$ and $\hat{\Pi}_{V_2}=Z_2(Z_2^TZ_2)^{-1}Z_2^T$. Thus
\begin{equation*}
\operatorname{tr}(\hat{\Pi}_{W_1}\hat{\Pi}_{V_2})=\operatorname{tr}\left(\left(\frac{1}{n}Z_1^TZ_1\right)^{-1}\frac{1}{n}Z_1^TZ_2\left(\frac{1}{n}Z_2^TZ_2\right)^{-1}\frac{1}{n}Z_2^TZ_1\right),
\end{equation*}
where we applied Lemma \ref{trlem} (i). By Theorem \ref{mfg}, we have $\Vert (1/n)Z_j^TZ_j-I\Vert_{\operatorname{op}}\leq \delta$ and thus 
\[\left(\frac{1}{n}Z_j^TZ_j\right)^{-1}=\sum_{k\geq 0}\left(I-\frac{1}{n}Z_j^TZ_j\right)^k\]
for $j=1,2$. We conclude that
\begin{align*}
&\mathbb{E}\left[1_{\mathcal{E}_\delta}\operatorname{tr}(\hat{\Pi}_{W_1}\hat{\Pi}_{V_2})\right]\\
& \leq\sum_{k,l=0}^\infty\mathbb{E}\left[1_{\mathcal{E}_\delta}\left|\operatorname{tr}\left(\left(I-\frac{1}{n}Z_1^TZ_1\right)^k\frac{1}{n}Z_1^TZ_2\left(I-\frac{1}{n}Z_2^TZ_2\right)^{l}\frac{1}{n}Z_2^TZ_1\right)\right|\right]\\
&\leq \sum_{k,l=0}^\infty\mathbb{E}\left[1_{\mathcal{E}_\delta}\sqrt{\operatorname{tr}\left(\left(I-\frac{1}{n}Z_1^TZ_1\right)^{2k}\frac{1}{n}Z_1^TZ_2\left(\frac{1}{n}Z_1^TZ_2\right)^T\right)}\right.\\
&\hspace{20mm}\left.\cdot\sqrt{\operatorname{tr}\left(\left(I-\frac{1}{n}Z_2^TZ_2\right)^{2k}\frac{1}{n}Z_2^TZ_1\left(\frac{1}{n}Z_2^TZ_1\right)^T\right)}\right]\\
&\leq \sum_{k,l=0}^\infty\delta^{k+l}\mathbb{E}\left[\operatorname{tr}\left(\frac{1}{n}Z_1^TZ_2\left(\frac{1}{n}Z_1^TZ_2\right)^T\right)\right],
\end{align*}
where we applied Lemma \ref{trlem} (i) and (ii) in the second inequality and Lemma \ref{trlem} (i) and (iii) and the bound $\Vert (1/n)Z_j^TZ_j-I\Vert_{\operatorname{op}}\leq \delta$ in the third inequality. Now
\begin{align*}
&\mathbb{E}\left[ \operatorname{tr}\left(\frac{1}{n}Z_1^TZ_2\left(\frac{1}{n}Z_1^TZ_2\right)^T\right)\right] \\
&=\sum_{j=1}^{\dim W_1}\sum_{k=1}^{d_2}\left(\left(\mathbb{E}\left[\phi_{1j}(X_1)\phi_{2k}(X_2)\right]\right) ^2 +\frac{1}{n}\operatorname{Var}(\phi_{1j}(X_1)\phi_{2k}(X_2))\right)\\
&\leq\sum_{j=1}^{\dim W_1}\sum_{k=1}^{d_2}\left\langle\phi_{1j},\phi_{2k} \right\rangle^2+\dim W_1\frac{\varphi^2d}{n},
\end{align*}
where we applied \eqref{eq:leq}. Finally, we use
\[\left\|\Pi_{V_2}|_{W_1}\right\|^2_{HS}=\sum_{j=1}^{\dim W_1}\|\Pi_{V_2}\phi_{1j}\|^2=\sum_{j=1}^{\dim W_1}\sum_{k=1}^{d_2}\left\langle\phi_{1j},\phi_{2k} \right\rangle^2\]
This completes the proof.
\end{proof}
Combining Proposition \ref{backf1} and \ref{backf2}, we obtain the following improvement of Corollary \ref{corbackf}:
\begin{corollary}\label{corbackf2} Let Assumption \ref{compass}, \ref{angle}, and \ref{assinfty} be satisfied. Then
\begin{align*}
 &\mathbb{E}\left[ 1_{\mathcal{E}_\delta}\Vert \hat{\Pi}_{W_1}(\hat{\Pi}_V\boldsymbol{\epsilon})_1\Vert_n^2\right]\\&\leq \frac{\sigma^2\dim W_1}{n}+\frac{(1+\delta)}{(1-\delta)^3}\frac{1}{1-\rho_0^2}\left(\frac{\sigma^2\left\|\Pi_{V_2}|_{W_1}\right\|^2_{HS}}{n}+\frac{\sigma^2\dim W_1}{n}\frac{\varphi^2d}{n}\right).
\end{align*}
\end{corollary}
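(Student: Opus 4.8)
The plan is to combine Proposition \ref{backf1} with Proposition \ref{backf2}, using the tower property of conditional expectation together with the elementary bound \eqref{eq:prere3} already established in the proof of Corollary \ref{corbackf}.

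First I would multiply the conditional bound of Proposition \ref{backf1} by the indicator $1_{\mathcal{E}_\delta}$, which is measurable with respect to $X^1,\dots,X^n$. Since the bound holds on $\mathcal{E}_\delta$, this gives, almost surely,
\[
1_{\mathcal{E}_\delta}\,\mathbb{E}\left[\Vert \hat{\Pi}_{W_1}(\hat{\Pi}_V\boldsymbol{\epsilon})_1\Vert_n^2\mid X^1,\dots,X^n\right]\leq 1_{\mathcal{E}_\delta}\left(\frac{\sigma^2\dim W_1}{n}+\frac{1}{1-\rho_{0,\delta}^2}\frac{\sigma^2\operatorname{tr}(\hat{\Pi}_{W_1}\hat{\Pi}_{V_2})}{n}\right).
\]
Taking expectations on both sides and using $\mathbb{E}\big[1_{\mathcal{E}_\delta}\,\mathbb{E}[Z\mid X]\big]=\mathbb{E}[1_{\mathcal{E}_\delta}Z]$ with $Z=\Vert \hat{\Pi}_{W_1}(\hat{\Pi}_V\boldsymbol{\epsilon})_1\Vert_n^2$, while bounding $1_{\mathcal{E}_\delta}\leq 1$ in the first term and pulling the deterministic constant $(1-\rho_{0,\delta}^2)^{-1}$ out of the remaining expectation, I obtain
\[
\mathbb{E}\left[1_{\mathcal{E}_\delta}\Vert \hat{\Pi}_{W_1}(\hat{\Pi}_V\boldsymbol{\epsilon})_1\Vert_n^2\right]\leq \frac{\sigma^2\dim W_1}{n}+\frac{1}{1-\rho_{0,\delta}^2}\frac{\sigma^2}{n}\,\mathbb{E}\left[1_{\mathcal{E}_\delta}\operatorname{tr}(\hat{\Pi}_{W_1}\hat{\Pi}_{V_2})\right].
\]

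Next I would replace $(1-\rho_{0,\delta}^2)^{-1}$ by $\tfrac{1+\delta}{1-\delta}(1-\rho_0^2)^{-1}$ via \eqref{eq:prere3}, and invoke Proposition \ref{backf2} to bound $\tfrac1n\mathbb{E}[1_{\mathcal{E}_\delta}\operatorname{tr}(\hat{\Pi}_{W_1}\hat{\Pi}_{V_2})]$ by $\tfrac{1}{(1-\delta)^2}\big(\tfrac{\Vert\Pi_{V_2}|_{W_1}\Vert_{HS}^2}{n}+\tfrac{\dim W_1}{n}\tfrac{\varphi^2 d}{n}\big)$. Multiplying the two scalar factors $\tfrac{1+\delta}{1-\delta}$ and $\tfrac{1}{(1-\delta)^2}$ produces $\tfrac{1+\delta}{(1-\delta)^3}$, which is precisely the constant in the claim; collecting terms completes the argument.

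There is essentially no hard step here: the proof is a short bookkeeping combination of two results proved earlier in the section. The only point requiring a little care is the passage through the conditional expectation — one must use that $\mathcal{E}_\delta$ is $\sigma(X^1,\dots,X^n)$-measurable, so that the indicator may be moved inside the conditional expectation, and that $\rho_{0,\delta}$ is deterministic, so that $(1-\rho_{0,\delta}^2)^{-1}$ may be extracted before Proposition \ref{backf2} is applied.
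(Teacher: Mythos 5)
Your proposal is correct and follows exactly the route the paper intends: the paper states Corollary \ref{corbackf2} as the combination of Propositions \ref{backf1} and \ref{backf2}, and your bookkeeping (indicator times the conditional bound, tower property, the bound \eqref{eq:prere3}, then Proposition \ref{backf2}, giving $\tfrac{1+\delta}{1-\delta}\cdot\tfrac{1}{(1-\delta)^2}=\tfrac{1+\delta}{(1-\delta)^3}$) is precisely that combination. The only implicit point is the nonnegativity of $\operatorname{tr}(\hat{\Pi}_{W_1}\hat{\Pi}_{V_2})$ when enlarging the constant before taking expectations, which is immediate since this trace equals $\operatorname{tr}(\hat{\Pi}_{V_2}\hat{\Pi}_{W_1}\hat{\Pi}_{V_2})\geq 0$.
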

\subsection{End of the proof of Theorem \ref{thm1}}  
Applying the arguments of \cite{B}, we obtain
\begin{equation}\label{eq:pr1}
{E}\left[ \Vert f_1-\hat{f}_{1}^*\Vert^2\right]\leq\mathbb{E}
\left[ 1_{\mathcal{E}_\delta}\Vert f_1-(\hat{f}_V)_1\Vert^2\right]+R_n.
\end{equation}
The details can be found in Appendix \ref{app2ha}. By the projection theorem (see, e.g., \cite[Theorem II.3]{RS}), we have
\begin{equation}\label{eq:pr2}
\Vert f_1-(\hat{f}_V)_1\Vert^2=\Vert f_1-\Pi_{V_1}f_1\Vert^2
+\Vert \Pi_{V_1}f_1-(\hat{f}_V)_1\Vert^2.
\end{equation}
By the definition of $\mathcal{E}_\delta$ and the definition of $\hat{f}_V$, we have
\begin{align}
&\mathbb{E}\left[ 1_{\mathcal{E}_\delta}\Vert \Pi_{V_1}f_1-(\hat{f}_V)_1\Vert^2\right]\nonumber\\
 &\leq\frac{1}{(1-\delta)}\mathbb{E}\left[ 1_{\mathcal{E}_\delta}\Vert \Pi_{V_1}f_1-(\hat{f}_V)_1\Vert_n^2\right]\nonumber\\
 &=\frac{1}{(1-\delta)}\mathbb{E}\left[ 1_{\mathcal{E}_\delta}\Vert \Pi_{V_1}f_1-(\hat{\Pi}_V\mathbf{Y})_1\Vert_n^2\right]\nonumber\\
 &=\frac{1}{(1-\delta)}\mathbb{E}\left[ 1_{\mathcal{E}_\delta}
 \mathbb{E}\left[ \Vert \Pi_{V_1}f_1-(\hat{\Pi}_V\mathbf{Y})_1\Vert_n^2|X^1,\dots,X^n\right] \right].\label{eq:ggw}
\end{align}
Recall from Section \ref{es} that on $\mathcal{E}_\delta$ each $g\in V$ is determined uniquely by $(g(X^1),\dots,g(X^n))^T$, which implies that on $\mathcal{E}_\delta$ we don't have to distinguish between those objects. For instance, if $\mathcal{E}_\delta$ holds, then $\hat{\Pi}_V$ and $\hat{\Pi}_{V_1}$ can also be seen as maps from $L^2(\mathbb{P}^X)$ to $V$ and $V_1$, respectively (by letting $\hat{\Pi}_Vh$ (resp. $\hat{\Pi}_{V_1}h$) equal to $\hat{\Pi}_V(h(X^1),\dots,h(X^n))^T$ (resp. $\hat{\Pi}_{V_1}(h(X^1),\dots,h(X^n))^T$) for $h\in L^2(\mathbb{P}^X)$).
Moreover, if $\mathcal{E}_\delta$ holds, then we have $\mathbb{E}[(\hat{\Pi}_V\boldsymbol{\epsilon})_1|X^1,\dots,X^n]=0$ and $(\hat{\Pi}_V\mathbf{Y})_1=(\hat{\Pi}_Vf)_1+(\hat{\Pi}_V\boldsymbol{\epsilon})_1$ (the former follows for instance from \eqref{eq:awr}). Thus \eqref{eq:ggw} is equal to 
\begin{equation}
\frac{1}{(1-\delta)}\mathbb{E}\left[ 1_{\mathcal{E}_\delta}\left(\Vert \Pi_{V_1}f_1-(\hat{\Pi}_Vf)_1\Vert_n^2+
 \mathbb{E}\left[\Vert (\hat{\Pi}_V\boldsymbol{\epsilon})_1\Vert_n^2|X_1,\dots,X_n\right] \right) \right].\label{eq:pr3}
\end{equation}
From \eqref{eq:pr2}-\eqref{eq:pr3} and the definition of $\mathcal{E}_\delta$, we obtain
\begin{align}
 \mathbb{E}&\left[ 1_{\mathcal{E}_\delta}\Vert f_1-(\hat{f}_V)_1\Vert^2\right]\nonumber\\
  &\leq\frac{(1+\delta)}{(1-\delta)}\mathbb{E}\left[ 1_{\mathcal{E}_\delta}\Big(\Vert f_1-\Pi_{V_1}f_1\Vert^2+\Vert \Pi_{V_1}f_1-(\hat{\Pi}_Vf)_1\Vert^2\Big)\right] \nonumber\\
  &+\frac{1}{(1-\delta)}\mathbb{E}\left[ 1_{\mathcal{E}_\delta} \mathbb{E}\left[\Vert (\hat{\Pi}_V\boldsymbol{\epsilon})_1\Vert_n^2|X^1,\dots,X^n\right] \right].\label{eq:pr4}
\end{align}
Applying the projection theorem and Corollary \ref{corbackf}, this is bounded by
\begin{equation*}
\frac{(1+\delta)}{(1-\delta)}\mathbb{E}\left[ 1_{\mathcal{E}_\delta}\Vert f_1-(\hat{\Pi}_Vf)_1\Vert^2\right]
 +\frac{(1+\delta)}{(1-\delta)^2}\frac{1}{1-\rho_0^2}
 \frac{\sigma^2\dim V_1}{n}.
\end{equation*}
By Assumption \ref{angle} and Lemma \ref{angleequiv}, we have
\[\Vert f_1-(\hat{\Pi}_Vf)_1\Vert^2\leq \frac{1}{1-\rho_0^2}\Vert f-\hat{\Pi}_Vf\Vert^2.\]
The projection theorem implies that
\[\Vert f-\hat{\Pi}_Vf\Vert^2=\Vert f-\Pi_Vf\Vert^2+\Vert\Pi_Vf-\hat{\Pi}_Vf\Vert^2.\]
Now, the following proposition completes the proof.
\begin{lemma}\label{infty}
 Let Assumption \ref{assinfty} be satisfied. Then 
\begin{equation*}
\mathbb{E}\left[ 1_ {\mathcal{E}_\delta}\Vert ( \Pi_V-\hat{\Pi}_V)f\Vert^2 \right]\leq  \frac{1}{(1-\delta)^2}\frac{\varphi^2d}{n}\Vert f-\Pi_Vf\Vert^2.
\end{equation*}
\end{lemma}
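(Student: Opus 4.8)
The plan is to work throughout on the event $\mathcal{E}_\delta$ and to exploit the fact that, although $\hat{\Pi}_Vf$ is random, the residual $w:=f-\Pi_Vf$ is \emph{exactly} orthogonal to $V$ in $L^2(\mathbb{P}^X)$, so that the empirical inner products $\langle g,w\rangle_n$, $g\in V$, are centered. First I would note that on $\mathcal{E}_\delta$ the evaluation map $V\to\mathbb{R}^n$, $g\mapsto(g(X^1),\dots,g(X^n))$, is injective, so $\hat{\Pi}_V$ may be regarded as a linear map $L^2(\mathbb{P}^X)\to V$; by definition of $\hat{\Pi}_V$ one has $\langle g,h-\hat{\Pi}_Vh\rangle_n=0$, i.e. $\langle g,\hat{\Pi}_Vh\rangle_n=\langle g,h\rangle_n$, for every $g\in V$ and $h\in L^2(\mathbb{P}^X)$, and in particular $\hat{\Pi}_V$ acts as the identity on $V$. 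Consequently $\hat{\Pi}_V\Pi_Vf=\Pi_Vf$, so that $(\Pi_V-\hat{\Pi}_V)f=-\hat{\Pi}_Vw$ on $\mathcal{E}_\delta$, and it suffices to bound $\mathbb{E}\big[1_{\mathcal{E}_\delta}\|\hat{\Pi}_Vw\|^2\big]$.

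Next, set $g:=\hat{\Pi}_Vw\in V$ and fix an orthonormal basis $b_1,\dots,b_d$ of $V$ in $L^2(\mathbb{P}^X)$; writing $g=\sum_{j=1}^dc_jb_j$ and $v_j:=\langle b_j,w\rangle_n$ we have $\|g\|=\|c\|_2$. On $\mathcal{E}_\delta$, the definition of $\mathcal{E}_\delta$ gives $(1-\delta)\|g\|^2\le\|g\|_n^2$, while the projection property gives $\|g\|_n^2=\langle g,\hat{\Pi}_Vw\rangle_n=\langle g,w\rangle_n=\sum_{j=1}^dc_jv_j\le\|c\|_2\,\|v\|_2=\|g\|\,\|v\|_2$ by Cauchy--Schwarz. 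Hence $\|g\|\le(1-\delta)^{-1}\|v\|_2$, so that
\[
1_{\mathcal{E}_\delta}\|(\Pi_V-\hat{\Pi}_V)f\|^2\le\frac{1}{(1-\delta)^2}\,\|v\|_2^2 .
\]
Taking expectations, it remains to bound $\mathbb{E}[\|v\|_2^2]=\sum_{j=1}^d\mathbb{E}[\langle b_j,w\rangle_n^2]$. Since $w\perp V$ in $L^2(\mathbb{P}^X)$ we have $\mathbb{E}[\langle b_j,w\rangle_n]=\langle b_j,w\rangle=0$, whence $\mathbb{E}[\langle b_j,w\rangle_n^2]=\operatorname{Var}(\langle b_j,w\rangle_n)\le n^{-1}\mathbb{E}[b_j(X)^2w(X)^2]$; summing over $j$ and using $\big\|\sum_{j=1}^db_j^2\big\|_\infty\le\varphi^2d$ (this is \eqref{eq:leq}, a consequence of Assumption \ref{assinfty}),
\[
\mathbb{E}[\|v\|_2^2]\le\frac1n\,\mathbb{E}\Big[\Big(\sum_{j=1}^db_j^2\Big)(X)\,w(X)^2\Big]\le\frac1n\,\Big\|\sum_{j=1}^db_j^2\Big\|_\infty\|w\|^2\le\frac{\varphi^2d}{n}\|f-\Pi_Vf\|^2 .
\]
Combining the two displays proves the lemma.

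The argument involves no delicate estimates; the one point that must be set up correctly is the first paragraph, namely recasting $(\Pi_V-\hat{\Pi}_V)f$ as $-\hat{\Pi}_Vw$ with $w$ population-orthogonal to $V$, together with the elementary inequality $\|g\|_n^2=\langle g,w\rangle_n\le\|g\|\,\|v\|_2$. This is exactly what replaces the trivial bound $\|w\|^2$ by the much smaller $(\varphi^2d/n)\|w\|^2$: the gain comes entirely from the centering of the $\langle b_j,w\rangle_n$. A routine but necessary bookkeeping point is that every manipulation with $\hat{\Pi}_V$ as an operator on $L^2(\mathbb{P}^X)$ is valid only on $\mathcal{E}_\delta$, which is harmless since the indicator $1_{\mathcal{E}_\delta}$ is carried along and simply bounded by $1$ before taking expectations.
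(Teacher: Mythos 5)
Your proof is correct, and it takes a genuinely different (and shorter) route than the paper's. You first reduce to the residual $w=f-\Pi_Vf$ via $\hat{\Pi}_V\Pi_Vf=\Pi_Vf$ on $\mathcal{E}_\delta$, and then bound $\|\hat{\Pi}_Vw\|$ by a duality argument: the empirical projection identity $\|\hat{\Pi}_Vw\|_n^2=\langle\hat{\Pi}_Vw,w\rangle_n$, the norm equivalence $(1-\delta)\|\cdot\|^2\le\|\cdot\|_n^2$ on $\mathcal{E}_\delta$, and Cauchy--Schwarz in the coefficients give $\|\hat{\Pi}_Vw\|\le(1-\delta)^{-1}\|v\|_2$ with $v_j=\langle b_j,w\rangle_n$; a single variance computation (using that $w\perp V$ in $L^2(\mathbb{P}^X)$ centers the $v_j$, together with $\|\sum_jb_j^2\|_\infty\le\varphi^2d$) then yields $\mathbb{E}\|v\|_2^2\le(\varphi^2d/n)\|w\|^2$. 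The paper instead works in coordinates with the Gram matrix $B_n$: it writes $\hat{\Pi}_Vf$ via $B_n^{-1}x_n$, expands $B_n^{-1}$ as a Neumann series on $\mathcal{E}_\delta$, splits $B_n^{-1}x_n-x$ into the two pieces $x_n-x$ and $(B_n-I)x$, bounds each variance separately with a Young-type $(1+\epsilon)$ inequality, performs the residual reduction $(\Pi_V-\hat{\Pi}_V)f=(\Pi_V-\hat{\Pi}_V)(1-\Pi_V)f$ only at the end, and lets $\epsilon\to0$. Both arguments rest on the same two ingredients --- the centering of the empirical inner products with the population residual and the bound \eqref{eq:leq} --- and give the same constant; your version avoids inverting $B_n$, needs only one variance bound, and dispenses with the limiting $\epsilon$ argument, at the mild cost of having to justify once that $\hat{\Pi}_V$ is a well-defined empirical orthogonal projection acting as the identity on $V$ on the event $\mathcal{E}_\delta$ (which the paper also uses elsewhere). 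The only cosmetic gaps are the trivial case $\hat{\Pi}_Vw=0$ when dividing by $\|g\|$, and noting that $b_j(X)^2w(X)^2$ is integrable because $b_j$ is bounded under Assumption \ref{assinfty}; neither affects correctness.
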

\begin{remark}
Instead of applying Lemma \ref{infty}, one can also apply the easier and weaker bound
\[\Vert\mathrm{\Pi}_Vf-\hat{\mathrm{\Pi}}_Vf\Vert^2\leq \frac{1}{1-\delta}\Vert\mathrm{\Pi}_Vf-\hat{\mathrm{\Pi}}_Vf\Vert^2_n\leq
\frac{1}{1-\delta}\Vert f-\mathrm{\Pi}_Vf\Vert^2_n,\]
which follows from the definition of $\mathcal{E}_\delta$ and the projection theorem. 
\end{remark}
\begin{proof}[Proof of Lemma \ref{infty}] Throughout the proof suppose that the event $\mathcal{E}_\delta$ holds. 
Let $b_1,\dots,b_{d}$ be an orthonormal basis of $V$. Let
\[B_n=(\langle b_j,b_k\rangle_n)_{1\leq j,k\leq d},\]
and let
\[x=(\langle b_1,f\rangle,\dots,\langle b_d,f\rangle)^T\ \text{ and }\ x_n=(\langle b_1,f\rangle_n,\dots,\langle b_d,f\rangle_n)^T.\]
Then we have
\[\Pi_V f=\sum_{j=1}^d x_jb_j\ \text{ and }\ \hat{\Pi}_Vf=\sum_{j=1}^d(B_n^{-1}x_n)_jb_j\]
and thus
\begin{equation}\label{eq:fireq}
\Vert  (\Pi_V-\hat{\Pi}_V)f\Vert^2=\Vert B_n^{-1}x_n-x\Vert_2^2.
\end{equation}
Since  $\mathcal{E}_\delta$ holds, we have $\Vert B_n-I\Vert_{\operatorname{op}}\leq \delta$ (see the proof of Theorem \ref{mfg}). This implies that
\begin{equation*}
B_n^{-1}=\sum_{k\geq 0}(I-B_n)^k.
\end{equation*}
Thus
\begin{align*}
B_n^{-1}x_n-x&=\sum_{k\geq 0}(I-B_n)^kx_n-x\\
&=\sum_{k\geq 0}(I-B_n)^k(x_n-x)+\sum_{k\geq 1}(I-B_n)^kx.
\end{align*}
Applying the bounds $\Vert B_n-I\Vert_{\operatorname{op}}\leq \delta$ and $(x+y)^2\leq (1+\epsilon)x^2+(1+1/\epsilon)y^2$, $\epsilon>0$ arbitrary, we obtain
\begin{align}
  \Vert B_n^{-1}x_n-x\Vert_2^2&\leq \frac{1}{(1-\delta)^2}(\Vert x_n-x\Vert_2+\Vert (B_n-I)x\Vert_2)^2\nonumber\\
  &\leq \frac{1}{(1-\delta)^2} ((1+\epsilon)\Vert x_n-x\Vert_2^2+(1+1/\epsilon) \Vert (B_n-I)x\Vert_2^2).\label{eq:preq1}
\end{align}
Now we have
\begin{align}
 \mathbb{E}\left[ \Vert x_n-x\Vert_2^2\right]&=\mathbb{E}\Big[\sum_{j=1}^{d}(\langle b_j,f\rangle_n-\langle b_j,f\rangle)^2 \Big] \nonumber\\
 &=\frac{1}{n}\sum_{j=1}^{d}\operatorname{Var}(b_j(X)f(X))\leq \frac{\varphi^2d}{n}\Vert f\Vert^2,\label{eq:preq2}
\end{align}
where we applied \eqref{eq:leq}. The $j$th coordinate of $(B_n-I)x$ is is equal to 
\begin{equation}\label{eq:hhh}
\langle b_j,\sum_{k=1}^d b_kx_k\rangle_n-\langle b_j,f\rangle=\langle b_j,\Pi_V f\rangle_n-\langle b_j,\Pi_V f\rangle
\end{equation} 
and thus
\begin{align}
 \mathbb{E}\left[ \Vert (B_n-I)x\Vert_2^2\right]&=\mathbb{E}\Big[\sum_{j=1}^{d}(\langle b_j,\Pi_V f\rangle_n-\langle b_j,\Pi_V f\rangle)^2 \Big] \nonumber\\&=\frac{1}{n}\sum_{j=1}^{d_1}\operatorname{Var}(b_j(X)\Pi_Vf(X))\leq \frac{\varphi^2d}{n}\Vert \Pi_Vf\Vert^2.\label{eq:preq3}
\end{align}
Applying \eqref{eq:fireq}-\eqref{eq:preq3}, we conclude that
\begin{equation}\label{eq:inftyeq}
\mathbb{E}\left[ 1_{\mathcal{E}_\delta}\Vert ( \Pi_V-\hat{\Pi}_V)f\Vert^2 \right]\leq 
\frac{1}{(1-\delta)^2} \frac{\varphi^2d}{n}((1+\epsilon)\Vert f\Vert^2+(1+1/\epsilon) \Vert \Pi_Vf\Vert^2).
\end{equation}
Finally, since $\Pi_V$ and $\hat{\Pi}_V$ fix elements in $V$, we obtain $(\Pi_V-\hat{\Pi}_V)\Pi_Vf=0$ and thus $(\Pi_V-\hat{\Pi}_V)f=(\Pi_V-\hat{\Pi}_V)(1-\Pi_V)f$. Combining this with \eqref{eq:inftyeq}, we see that
\begin{equation*}
\mathbb{E}\left[ 1_{\mathcal{E}_\delta}\Vert ( \Pi_V-\hat{\Pi}_V)f\Vert^2 \right]\leq 
\frac{1}{(1-\delta)^2} \frac{\varphi^2d}{n}(1+\epsilon)\Vert f-\Pi_Vf\Vert^2.
\end{equation*}
Since $\epsilon>0$ is arbitrary, this completes the proof.
\end{proof}
\subsection{End of the proof of Theorem \ref{thm2}} Compared with the previous section, we modify our analysis of the bias term, which is based the following two lemmas. Moreover, we replace Corollary \ref{corbackf2} by Corollary~\ref{corbackf}.
\begin{lemma}\label{lb1} Let Assumption \ref{compass} and \ref{angle} be satisfied. If $\mathcal{E_\delta}$ holds, then we have
\begin{equation*}
\| \hat{\Pi}_{V_1}h_1-(\hat{\Pi}_V  h_1)_1\|_n^2\leq \frac{(1+\delta)}{(1-\delta)}\frac{1}{(1-\rho_{0}^2)}\|h_1-\hat{\Pi}_{V_1}h_1\|_n^2
\end{equation*}
for all $h_1\in H_1$ and
\begin{equation*}
\|(\hat{\Pi}_Vh_2)_1\|_n^2\leq \frac{(1+\delta)}{(1-\delta)}\frac{1}{(1-\rho_{0}^2)}\|h_2-\hat{\Pi}_{V_2}h_2\|_n^2
\end{equation*}
for all $h_2\in H_2$. 
\end{lemma}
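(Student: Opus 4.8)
The plan is to derive both inequalities from the empirical angle bound \eqref{eq:empangs} in Proposition \ref{empangle}, working throughout on the event $\mathcal{E}_\delta$. On this event $\langle\cdot,\cdot\rangle_n$ is an inner product on $\mathbb{R}^n$ and $\hat{\Pi}_V,\hat{\Pi}_{V_1},\hat{\Pi}_{V_2}$ are the associated orthogonal projections onto the evaluation subspaces attached to $V,V_1,V_2$; consequently each of them is a $\|\cdot\|_n$-contraction, the projection $\hat{\Pi}_V$ fixes every vector in the range of $\hat{\Pi}_{V_1}$ or of $\hat{\Pi}_{V_2}$ (because $V_1,V_2\subseteq V$), and the decomposition of a vector of $V$ into its $V_1$- and $V_2$-parts is unique (by \eqref{eq:backfcond}). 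Note that $h_1\in H_1$ and $h_2\in H_2$ need not lie in $V$, so all projections and norms below are to be read as operations on the evaluation vectors $(h_1(X^i))_i$ and $(h_2(X^i))_i$; this bookkeeping is the only genuine point of care.

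For the first inequality I would set $g_1=\hat{\Pi}_{V_1}h_1-(\hat{\Pi}_Vh_1)_1\in V_1$ and $g_2=-(\hat{\Pi}_Vh_1)_2\in V_2$, so that $g_1+g_2=\hat{\Pi}_{V_1}h_1-\hat{\Pi}_Vh_1$. Since $\hat{\Pi}_V$ fixes $\hat{\Pi}_{V_1}h_1$, this equals $-\hat{\Pi}_V(h_1-\hat{\Pi}_{V_1}h_1)$, whence $\|g_1+g_2\|_n\le\|h_1-\hat{\Pi}_{V_1}h_1\|_n$ by the contraction property. Applying \eqref{eq:empangs} to the pair $(g_1,g_2)$ gives $\|g_1\|_n^2\le\frac{1+\delta}{1-\delta}\frac{1}{1-\rho_0^2}\|g_1+g_2\|_n^2$, and combining the two displays (recalling $g_1=\hat{\Pi}_{V_1}h_1-(\hat{\Pi}_Vh_1)_1$) is exactly the claimed bound.

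For the second inequality the key preliminary step is to decompose $h_2=\hat{\Pi}_{V_2}h_2+(h_2-\hat{\Pi}_{V_2}h_2)$ and use linearity of $\hat{\Pi}_V$: because $\hat{\Pi}_V$ fixes $\hat{\Pi}_{V_2}h_2\in V_2$, one obtains $\hat{\Pi}_Vh_2=\hat{\Pi}_{V_2}h_2+\hat{\Pi}_V(h_2-\hat{\Pi}_{V_2}h_2)$, and taking $V_1$-components (the $V_1$-part of $\hat{\Pi}_{V_2}h_2$ is zero, and the decomposition is unique) gives $(\hat{\Pi}_Vh_2)_1=\bigl(\hat{\Pi}_V(h_2-\hat{\Pi}_{V_2}h_2)\bigr)_1$. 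Now apply \eqref{eq:empangs} to the pair of $V_1$- and $V_2$-parts of $\hat{\Pi}_V(h_2-\hat{\Pi}_{V_2}h_2)$, then the contraction property of $\hat{\Pi}_V$ to bound $\|\hat{\Pi}_V(h_2-\hat{\Pi}_{V_2}h_2)\|_n$ by $\|h_2-\hat{\Pi}_{V_2}h_2\|_n$, exactly as in the first part; this yields $\|(\hat{\Pi}_Vh_2)_1\|_n^2\le\frac{1+\delta}{1-\delta}\frac{1}{1-\rho_0^2}\|h_2-\hat{\Pi}_{V_2}h_2\|_n^2$.

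No step presents a serious obstacle: the whole content is that \eqref{eq:empangs} controls a $V_1$-component by the norm of the full $V_1+V_2$ sum, and that $\hat{\Pi}_V$ can only decrease $\|\cdot\|_n$. The part most likely to trip one up is recognizing that one should first strip off $\hat{\Pi}_{V_1}h_1$ (resp. $\hat{\Pi}_{V_2}h_2$) before projecting with $\hat{\Pi}_V$, so that the residual already has small empirical norm and the final estimate is an approximation-error bound rather than a bound in terms of $\|h_1\|_n$ or $\|h_2\|_n$.
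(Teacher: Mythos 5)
Your proposal is correct and follows essentially the same route as the paper's proof: the identities $\hat{\Pi}_{V_1}h_1-(\hat{\Pi}_Vh_1)_1=-(\hat{\Pi}_V(h_1-\hat{\Pi}_{V_1}h_1))_1$ and $(\hat{\Pi}_Vh_2)_1=(\hat{\Pi}_V(h_2-\hat{\Pi}_{V_2}h_2))_1$, combined with the empirical angle bound \eqref{eq:empangs} and the fact that $\hat{\Pi}_V$ lowers $\|\cdot\|_n$, are exactly the paper's argument. The only difference is presentational (you name the $V_1$- and $V_2$-components explicitly as $g_1,g_2$), which changes nothing of substance.
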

\begin{proof}[Proof of Lemma \ref{lb1}]
By \eqref{eq:empangs} and the fact that projections lower the norm, we have
\begin{align*}
\|\hat{\Pi}_{V_1}h_1-(\hat{\Pi}_V  h_1)_1\|_n^2&=\|(\hat{\Pi}_V  (h_1-\hat{\Pi}_{V_1}h_1))_1\|_n^2\\
&\leq \frac{(1+\delta)}{(1-\delta)}\frac{1}{(1-\rho_{0}^2)}\|h_1-\hat{\Pi}_{V_1}h_1\|_n^2.
\end{align*}
This gives the first inequality. Since $\hat{\Pi}_{V}\hat{\Pi}_{V_2}h_2=\hat{\Pi}_{V_2}h_2$ and $(\hat{\Pi}_{V_2}h_2)_1=0$, we have $(\hat{\Pi}_Vh_2)_1=(\hat{\Pi}_V(h_2-\hat{\Pi}_{V_2}h_2))_1$. Using the previous arguments, we conclude that
\[\|(\hat{\Pi}_Vh_2)_1\|_n^2=\|(\hat{\Pi}_V(h_2-\hat{\Pi}_{V_2}h_2))_1\|_n^2\leq \frac{(1+\delta)}{(1-\delta)}\frac{1}{(1-\rho_{0}^2)}\|h_2-\hat{\Pi}_{V_2}h_2\|_n^2.\]
This completes the proof. 
\end{proof}
\begin{lemma}\label{lb2} Let Assumption \ref{compass} and \ref{angle} be satisfied. Then we have
\begin{align*}
&\mathbb{E}\left[ 1_{\mathcal{E}_\delta}\|f_1-(\hat{\Pi}_Vf)_1\|_n^2\right]\\&\leq \frac{(1+\delta)}{(1-\delta)}\frac{3}{(1-\rho_{0}^2)}\left(\|f_1-\Pi_{V_1}f_1\|^2+\|f_2-\Pi_{V_2}f_2\|^2\right).
\end{align*}
\end{lemma}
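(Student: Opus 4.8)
The plan is to carry out the whole argument on the event $\mathcal{E}_\delta$, where $\|\cdot\|_n$ restricts to a genuine norm on $V$ and the empirical projections $\hat{\Pi}_V,\hat{\Pi}_{V_1},\hat{\Pi}_{V_2}$ are well defined and may be viewed as maps on $L^2(\mathbb{P}^X)$ (as in Section~\ref{es}); afterwards I multiply by $1_{\mathcal{E}_\delta}$ and take expectations. Writing $f=f_1+f_2$, using linearity of $\hat{\Pi}_V$ and the uniqueness (valid under Assumption~\ref{angle}) of the decomposition into $V_1$- and $V_2$-components, gives $(\hat{\Pi}_V f)_1=(\hat{\Pi}_V f_1)_1+(\hat{\Pi}_V f_2)_1\in V_1$. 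The key observation is then a Pythagorean split: since $f_1-\hat{\Pi}_{V_1}f_1$ is $\langle\cdot,\cdot\rangle_n$-orthogonal to $V_1$ by definition of $\hat{\Pi}_{V_1}$, and $(\hat{\Pi}_V f)_1\in V_1$,
\[
\|f_1-(\hat{\Pi}_V f)_1\|_n^2=\|f_1-\hat{\Pi}_{V_1}f_1\|_n^2+\|\hat{\Pi}_{V_1}f_1-(\hat{\Pi}_V f)_1\|_n^2 .
\]
It is this exact splitting, rather than a three-term triangle inequality, that yields the constant $3$ instead of a larger one, so this is the step I would be most careful about.

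For the first term, $\hat{\Pi}_{V_1}f_1$ is the $\|\cdot\|_n$-best approximation of $f_1$ from $V_1$ and $\Pi_{V_1}f_1\in V_1$, hence $\|f_1-\hat{\Pi}_{V_1}f_1\|_n^2\le\|f_1-\Pi_{V_1}f_1\|_n^2$. For the second term I would write
\[
\hat{\Pi}_{V_1}f_1-(\hat{\Pi}_V f)_1=\big(\hat{\Pi}_{V_1}f_1-(\hat{\Pi}_V f_1)_1\big)-(\hat{\Pi}_V f_2)_1 ,
\]
apply $(a-b)^2\le 2a^2+2b^2$, and invoke Lemma~\ref{lb1}: its first inequality with $h_1=f_1$ and its second with $h_2=f_2$ (legitimate since $f_1\in H_1$, $f_2\in H_2$ by Assumption~\ref{compass}), followed once more by $\|f_1-\hat{\Pi}_{V_1}f_1\|_n^2\le\|f_1-\Pi_{V_1}f_1\|_n^2$ and $\|f_2-\hat{\Pi}_{V_2}f_2\|_n^2\le\|f_2-\Pi_{V_2}f_2\|_n^2$. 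This gives
\[
\|\hat{\Pi}_{V_1}f_1-(\hat{\Pi}_V f)_1\|_n^2\le 2\,\frac{1+\delta}{1-\delta}\,\frac{1}{1-\rho_0^2}\big(\|f_1-\Pi_{V_1}f_1\|_n^2+\|f_2-\Pi_{V_2}f_2\|_n^2\big).
\]

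Combining the two bounds and using the trivial inequality $1\le\frac{1+\delta}{1-\delta}\frac{1}{1-\rho_0^2}$ to absorb the extra $\|f_1-\Pi_{V_1}f_1\|_n^2$ from the first term yields, on $\mathcal{E}_\delta$,
\[
\|f_1-(\hat{\Pi}_V f)_1\|_n^2\le\frac{1+\delta}{1-\delta}\,\frac{3}{1-\rho_0^2}\big(\|f_1-\Pi_{V_1}f_1\|_n^2+\|f_2-\Pi_{V_2}f_2\|_n^2\big).
\]
I would finish by multiplying through by $1_{\mathcal{E}_\delta}$, bounding $1_{\mathcal{E}_\delta}\le 1$, taking expectations, and using $\mathbb{E}[\|g\|_n^2]=\|g\|^2$ applied to $g=f_1-\Pi_{V_1}f_1$ and $g=f_2-\Pi_{V_2}f_2$ to pass from the empirical norm to the $L^2(\mathbb{P}^X)$-norm, which is exactly the asserted bound. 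Apart from the Pythagorean step, everything is routine bookkeeping with the estimates already provided by Lemma~\ref{lb1} and the definition of $\mathcal{E}_\delta$.
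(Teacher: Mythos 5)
Your proposal is correct and follows essentially the same route as the paper: the same empirical Pythagorean split off of $\hat{\Pi}_{V_1}f_1$, the bound $(a-b)^2\le 2a^2+2b^2$ combined with both inequalities of Lemma~\ref{lb1}, absorption of the leading term via $1\le\frac{1+\delta}{1-\delta}\frac{1}{1-\rho_0^2}$ to get the factor $3$, the projection bounds $\|f_j-\hat{\Pi}_{V_j}f_j\|_n\le\|f_j-\Pi_{V_j}f_j\|_n$, and finally $\mathbb{E}[\|g\|_n^2]=\|g\|^2$. No gaps.
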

\begin{proof}[Proof of Lemma \ref{lb2}]
By the projection theorem, we have
\[\|f_1-(\hat{\Pi}_V  f)_1\|_n^2=\|f_1-\hat{\Pi}_{V_1}f_1\|_n^2+\|\hat{\Pi}_{V_1}f_1-(\hat{\Pi}_V  f_1)_1-(\hat{\Pi}_V  f_2)_1\|_n^2.\]
Applying this, the bound $\|x+y\|^2\leq 2\|x\|^2+2\|y\|^2$, and Lemma \ref{lb2}, we obtain
\begin{align*}
&\mathbb{E}\left[ 1_{\mathcal{E}_\delta}\|f_1-(\hat{\Pi}_Vf)_1\|_n^2\right]\\&\leq\frac{(1+\delta)}{(1-\delta)}\frac{3}{(1-\rho_{0}^2)}\left(\mathbb{E}\left[\|f_1-\hat{\Pi}_{V_1}f_1\|_n^2+\|f_2-\hat{\Pi}_{V_2}f_2\|_n^2\right]\right).
\end{align*}
By the projection theorem, we have for $j=1,2$,
\[\|f_j-\hat{\Pi}_{V_j}f_j\|_n^2\leq\|f_j-\Pi_{V_j}f_j\|_n^2.\] 
Moreover, taking expectation, we get for $j=1,2$,
\[\mathbb{E}\left[\|f_j-\Pi_{V_j}f_j\|_n^2\right]=\|f_j-\Pi_{V_j}f_j\|^2.\] 
This completes the proof.
\end{proof}
Now, we begin with the proof of Theorem \ref{thm2}.
Repeating the steps \eqref{eq:pr1}-\eqref{eq:pr3} in the proof of Theorem \ref{thm1}, we have
\begin{align*}
\mathbb{E}&\left[ \Vert f_1-\hat{f}_{1}^*\Vert^2 \right]\nonumber\\
&\leq \Vert f_1-\Pi_{W_1}f_1\Vert^2+
\frac{1}{(1-\delta)} \mathbb{E}\left[1_{\mathcal{E}_\delta}\Vert \Pi_{W_1}f_1-\hat{\Pi}_{W_1}(\hat{\Pi}_Vf)_1\Vert_n^2 \right]\nonumber\\
&+\frac{1}{(1-\delta)} \mathbb{E}\left[ 1_{\mathcal{E}_\delta}\mathbb{E}\left[ \Vert \hat{\Pi}_{W_1}(\hat{\Pi}_V\boldsymbol{\epsilon})_1\Vert_n^2|X^1,\dots,X^n \right]\right]+R_n.
\end{align*}
Applying the bound $\|x+y\|^2\leq 2\|x\|^2+2\|y\|^2$ and the fact that projections lower the norm, we obtain
\begin{align*}
&\Vert \Pi_{W_1}f_1-\hat{\Pi}_{W_1}(\hat{\Pi}_Vf)_1\Vert_n^2\nonumber\\&\leq 2\Vert\Pi_{W_1}f_1-\hat{\Pi}_{W_1}f_1\Vert_n^2+2\Vert \hat{\Pi}_{W_1}f_1-\hat{\Pi}_{W_1}(\hat{\Pi}_Vf)_1\Vert_n^2\nonumber\\
&\leq 2\Vert\Pi_{W_1}f_1-\hat{\Pi}_{W_1}f_1\Vert_n^2+2\Vert f_1-(\hat{\Pi}_Vf)_1\Vert_n^2.
\end{align*}
Thus
\begin{align}
\mathbb{E}&\left[ \Vert f_1-\hat{f}_{1}^*\Vert^2 \right]\nonumber\\
&\leq \Vert f_1-\Pi_{W_1}f_1\Vert^2+
\frac{2}{(1-\delta)} \mathbb{E}\left[1_{\mathcal{E}_\delta}\Vert\Pi_{W_1}f_1-\hat{\Pi}_{W_1}f_1\Vert_n^2\right]\nonumber\\
&+\frac{1}{(1-\delta)} \mathbb{E}\left[ 1_{\mathcal{E}_\delta}\mathbb{E}\left[ \Vert \hat{\Pi}_{W_1}(\hat{\Pi}_V\boldsymbol{\epsilon})_1\Vert_n^2|X^1,\dots,X^n \right]\right]\nonumber\\
&+\frac{2}{(1-\delta)}\mathbb{E}\left[1_{\mathcal{E}_\delta}\Vert f_1-(\hat{\Pi}_Vf)_1\Vert_n^2\right]+R_n.\label{eq:fbiaeq}
\end{align}
Similarly as in Lemma \ref{infty}, we have 
\begin{equation}\label{eq:bieq2}
\mathbb{E}\left[ 1_ {\mathcal{E}_\delta}\Vert \Pi_{W_1}f_1-\hat{\Pi}_{W_1}f_1\Vert^2 \right]\leq  \frac{1}{(1-\delta)^2}\frac{\varphi^2d}{n}\Vert f_1-\Pi_{W_1}f_1\Vert^2.
\end{equation}
Inserting  \eqref{eq:bieq2}, Lemma \ref{lb2}, and Corollary \ref{corbackf2} into \eqref{eq:fbiaeq}, we complete the proof.\qed

\section{Proof of Theorem \ref{thm3} and \ref{thm4}}\label{prthm34}
\subsection{The bias term revisited}
In this subsection, we prove two results which lead to improvements of the bias term considered in Lemma \ref{lb2}. This improvement is possible under the additional regularity conditions on the design densities, namely \eqref{eq:condjdens} or \eqref{eq:condjdenspr}. We first prove:
\begin{proposition}\label{abrut} Let Assumption \ref{compass}, \ref{angle}, and \ref{HSce} be satisfied. Let $r$, $\phi(V_2)$, $\psi_\Pi(V_2)$, and $h_1$ be as in Theorem \ref{thm4}. Then
\begin{align}
 \Vert(\Pi_Vf_2)_1\Vert&\leq\frac{1}{1-\rho_0^2}\Vert h_1\Vert\psi_\Pi(V_2)\phi(V_2).\label{eq:abrutc}
\end{align}
\end{proposition}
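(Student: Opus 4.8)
The plan is to show that the first component $(\Pi_V f_2)_1$ is controlled by the \emph{product} $\|h_1\|\,\psi_\Pi(V_2)\,\phi(V_2)$ by exploiting two properties of the residual $u:=f_2-\Pi_{V_2}f_2$, namely that $u\in H_2$ and that $u\perp V_2$; the regularity condition \eqref{eq:condjdenspr} is what converts these into a bound involving $\psi_\Pi(V_2)$ rather than only the crude factor $\rho_0$. First I would reduce to the residual: since $V_2\subseteq V$ we have $\Pi_V\Pi_{V_2}=\Pi_{V_2}$, and $\Pi_{V_2}f_2\in V_2$ has zero first component in the decomposition $V=V_1\oplus V_2$; hence $(\Pi_V f_2)_1=(\Pi_V u)_1$, with $u\in H_2$ and $u\perp V_2$. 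Then I would set up a self-improving inequality for $\|\Pi_V u\|$: writing $\Pi_V u=(\Pi_V u)_1+(\Pi_V u)_2$ with $(\Pi_V u)_2\in V_2$ and using $u\perp V_2$, one gets $\|\Pi_V u\|^2=\langle\Pi_V u,u\rangle=\langle(\Pi_V u)_1,u\rangle$, so everything reduces to estimating $\langle g_1,u\rangle$ for $g_1\in V_1\subseteq H_1$.

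The core step is this estimate. Writing $\langle g_1,u\rangle=\mathbb{E}[g_1(X_1)u(X_2)]$ and using the density $p$ from Assumption \ref{HSce}, Fubini's theorem (legitimate since the integrand lies in $L^1(\mathbb{P}^X)$ by Cauchy--Schwarz) turns the inner integral over $x_2$, for $\mathbb{P}^{X_1}$-almost every $x_1$, into $\langle u,\,p(x_1,\cdot)/(p_1(x_1)p_2(\cdot))\rangle_{L^2(\mathbb{P}^{X_2})}$ (well defined by \eqref{eq:condl2}); this equals $\langle u,r(x_1,\cdot)\rangle_{L^2(\mathbb{P}^{X_2})}$ because $u\in H_2$ and $r(x_1,\cdot)$ is the $H_2$-projection of that kernel, and in turn equals $\langle u,(1-\Pi_{V_2})r(x_1,\cdot)\rangle_{L^2(\mathbb{P}^{X_2})}$ because $u\perp V_2$. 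Cauchy--Schwarz in $L^2(\mathbb{P}^{X_2})$ together with \eqref{eq:condjdenspr} bounds this by $\|u\|\,h_1(x_1)\psi_\Pi(V_2)$, and a second Cauchy--Schwarz, in $L^2(\mathbb{P}^{X_1})$, yields $|\langle g_1,u\rangle|\le \|h_1\|\,\psi_\Pi(V_2)\,\|u\|\,\|g_1\|$ for every $g_1\in H_1$.

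Finally I would combine. Lemma \ref{angleequiv}(iii), applied to the decomposition $\Pi_V u=(\Pi_V u)_1+(\Pi_V u)_2$ (legitimate with $\varrho=\rho_0$, since $\rho_0(V_1,V_2)\le\rho_0<1$), gives $\|(\Pi_V u)_1\|\le(1-\rho_0^2)^{-1/2}\|\Pi_V u\|$. Feeding the core estimate with $g_1=(\Pi_V u)_1$ into $\|\Pi_V u\|^2=\langle(\Pi_V u)_1,u\rangle$ then gives $\|\Pi_V u\|^2\le(1-\rho_0^2)^{-1/2}\|h_1\|\psi_\Pi(V_2)\|u\|\,\|\Pi_V u\|$, hence $\|\Pi_V u\|\le(1-\rho_0^2)^{-1/2}\|h_1\|\psi_\Pi(V_2)\|u\|$; using the displayed inequality once more together with $(\Pi_V f_2)_1=(\Pi_V u)_1$ and $\|u\|=\|f_2-\Pi_{V_2}f_2\|\le\phi(V_2)$ yields $\|(\Pi_V f_2)_1\|\le(1-\rho_0^2)^{-1}\|h_1\|\psi_\Pi(V_2)\phi(V_2)$, as claimed. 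The main obstacle is the core step: one must carefully justify Fubini and deal with the null set $\{p_1=0\}$, and above all invoke the defining property of $r(x_1,\cdot)$ as the $H_2$-projection correctly, since it is exactly the interplay of $u\in H_2$ with $u\perp V_2$ that replaces the trivial bound $\rho_0$ by the small quantity $\|h_1\|\psi_\Pi(V_2)$.
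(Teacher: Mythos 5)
Your proof is correct and reaches exactly the bound \eqref{eq:abrutc}, but the second half follows a genuinely different route from the paper. The first half is in substance the paper's own computation: the paper bounds $\Vert \Pi_{V_1}(f_2-\Pi_{V_2}f_2)\Vert$ by expanding along an orthonormal basis of $V_1$ and using Bessel's inequality, Fubini, the defining property of $r(x_1,\cdot)$ as the $H_2$-projection of the kernel, and \eqref{eq:condjdenspr}; your duality formulation (testing against an arbitrary $g_1\in H_1$ and using that $u=f_2-\Pi_{V_2}f_2$ lies in $H_2$ and is orthogonal to $V_2$) is the same estimate in a basis-free dress. Where you diverge is in converting that estimate into a bound on $(\Pi_V u)_1$: the paper invokes von Neumann's alternating projection theorem (Lemma \ref{Arvn}) together with the contraction bounds $\Vert\Pi_{V_1}\Pi_{V_2}h\Vert\leq\rho_0\Vert\Pi_{V_2}h\Vert$, and sums the resulting geometric series with ratio $\rho_0^2$ to produce the factor $(1-\rho_0^2)^{-1}$, whereas you use the elementary identity $\Vert\Pi_V u\Vert^2=\langle(\Pi_V u)_1,u\rangle$ (valid precisely because $u\perp V_2$ kills the $V_2$-component and $\Pi_V$ is self-adjoint) and then apply Lemma \ref{angleequiv}(iii) twice, splitting the constant as $(1-\rho_0^2)^{-1/2}\cdot(1-\rho_0^2)^{-1/2}$. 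Your variational argument is shorter, avoids any convergence discussion for the backfitting series, and delivers the identical constant; the paper's series argument has the advantage of running in parallel with the empirical-norm analogue in Proposition \ref{abrutemp}, where the same alternating-projection machinery is reused on the event $\mathcal{E}_\delta$ with $\rho_{0,\delta}$ in place of $\rho_0$, so the two proofs stay structurally aligned. Do note the minor notational clash between the function $h_1$ from \eqref{eq:condjdenspr} and the generic element $h_1\in\mathcal{H}_1$ of Lemma \ref{angleequiv}, and that the uniqueness of the decomposition $\Pi_V u=(\Pi_V u)_1+(\Pi_V u)_2$ rests on $V_1\cap V_2=\{0\}$ from Assumption \ref{angle}; both points are cosmetic and do not affect the validity of your argument.
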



\begin{proof} Let $\phi_1,\dots,\phi_{d_1}$ be an orthonormal basis of $V_1$. We have
\begin{align*}
&\Vert\Pi_{V_1}(f_2-\Pi_{V_2} f_2)\Vert^2\\
&=\sum_{j=1}^{d_1}\Big(\int_{S_1\times S_2}\phi_j(x_1)(f_2(x_2)-\Pi_{V_2}f_2(x_2))p(x_1,x_2)d(\nu_1\otimes\nu_2)(x_1,x_2) \Big)^2\\
&=\sum_{j=1}^{d_1}\Big(\int_{S_1}\phi_j(x_1)\int_{S_2}((1-\Pi_{V_2})f_2(x_2))\frac{p(x_1,x_2)}{p_1(x_1)p_2(x_2)}d\mathbb{P}^{X_2}(x_2)d\mathbb{P}^{X_1}(x_1) \Big)^2\\
&=\sum_{j=1}^{d_1}\Big(\int_{S_1} \Big\langle (1-\Pi_{V_2})f_2,\frac{p(x_1,\cdot)}{p_1(x_1)p_2(\cdot)}\Big\rangle_{L^2(\mathbb{P}^{X_2})} \phi_j(x_1)d\mathbb{P}^{X_1}(x_1) \Big)^2,
\end{align*}
where we already applied Fubini's theorem and Assumption \ref{HS} in the second equality.
Since orthogonal projections are idempotent and self-adjoint, the above is equal to
\begin{align*}
&=\sum_{j=1}^{d_1}\Big(\int_{S_1} \langle (1-\Pi_{V_2})f_2,(1-\Pi_{V_2})(r(x_1,\cdot))\rangle_{L^2(\mathbb{P}^{X_2})} \phi_j(x_1)d\mathbb{P}^{X_1}(x_1) \Big)^2 \\
&\leq\int_{S_1}  \langle (1-\Pi_{V_2})f_2,(1-\Pi_{V_2})(r(x_1,\cdot))\rangle_{L^2(\mathbb{P}^{X_2})}^2d\mathbb{P}^{X_1}(x_1)\\
&\leq \Vert h_1\Vert^2(\psi_\Pi(V_2)\phi(V_2))^2,
\end{align*}
where we applied Bessel's inequality in the first inequality and the Cauchy-Schwarz inequality and \eqref{eq:condjdenspr} in the second inequality. Thus we have shown that
\begin{equation}\label{eq:smarg}
 \Vert\Pi_{V_1}(f_2-\Pi_{V_2} f_2)\Vert \leq\Vert h_1\Vert\psi_\Pi(V_2)\phi(V_2).
\end{equation}
Now, by Lemma \ref{Arvn}, we have
\begin{equation}\label{eq:arpcon}
 \big\Vert (\Pi_V h)_1-\big(\Pi_{V_1}-\sum_{j=1}^k(\Pi_{V_1}\Pi_{V_2})^j(1-\Pi_{V_1})\big)h\big\Vert\rightarrow 0,
\end{equation}
as $k\rightarrow\infty$, for all $h\in L^2(\mathbb{P}^{X})$. By Assumption \ref{angle}, we have 
\[\Vert\Pi_{V_1}\Pi_{V_2}h\Vert\leq\rho_0\Vert\Pi_{V_2}h\Vert\ \text{ and }\ \Vert\Pi_{V_2}\Pi_{V_1}h\Vert\leq \rho_0\Vert\Pi_{V_1}h\Vert\]
for all $h\in L^2(\mathbb{P}^{X})$, which follows as in the proof of \eqref{eq:pa1}.  Applying this and \eqref{eq:smarg}, we obtain
\begin{align}
 &\big\Vert  \big(\Pi_{V_1}-\sum_{j=1}^k(\Pi_{V_1}\Pi_{V_2})^j(1-\Pi_{V_1})\big)(f_2-\Pi_{V_2}f_2)\big\Vert \nonumber\\
 &=\big\Vert\sum_{j=0}^k(\Pi_{V_1}\Pi_{V_2})^j\Pi_{V_1}(f_2-\Pi_{V_2}f_2)\big\Vert\nonumber\\
  &\leq\sum_{j=0}^k\rho_0^{2j}\Vert \Pi_{V_1}(f_2-\Pi_{V_2}f_2)\Vert\leq\frac{1}{1-\rho_0^2}\Vert h_1\Vert\psi_\Pi(V_2)\phi(V_2)\label{eq:backpa}.
\end{align} 
Since $\Pi_{V}\Pi_{V_2}f_2=\Pi_{V_2}f_2$ and $(\Pi_{V_2}f_2)_1=0$, we have $(\Pi_Vf_2)_1=
(\Pi_V(f_2-\Pi_{V_2}f_2))_1$. Applying this,  \eqref{eq:arpcon}, and \eqref{eq:backpa}, we conclude that
\begin{equation*}
\Vert(\Pi_Vf_2)_1\Vert\leq\frac{1}{1-\rho_0^2}\Vert h_1\Vert\psi_\Pi(V_2)\phi(V_2).
\end{equation*}
This completes the proof.
\end{proof}
\begin{proposition}\label{abrutemp} Let Assumption \ref{compass}, \ref{angle}, \ref{assinfty}, and \ref{HSce} be satisfied. Let $\phi(V_2)$, $\psi(V_2)$, and $h_1$ be as in Theorem \ref{thm3}. Moreover, suppose that $\Vert g_1\Vert_\infty\leq\varphi\sqrt{d_1}\Vert g_1\Vert$ for all $g_1\in V_1$. Then
\begin{multline}\label{eq:abrutempc}
 \mathbb{E}\left[ 1_{\mathcal{E}_\delta}\Vert(\hat{\Pi}_Vf_2)_1\Vert_n^2\right]
 \leq\frac{(1+\delta)^2}{(1-\delta)^3}\frac{2}{(1-\rho_{0}^2)^2}\bigg(\Vert h_1\Vert^2(\psi(V_2)\phi(V_2))^2\\
 +\left.\frac{1}{n}\Vert h_1\Vert^2\Vert(1-\Pi_{V_2})f_2\Vert^2_{\infty}\psi^2(V_2)+\phi^2(V_2)\frac{\varphi^2d_1}{n}\right).
\end{multline}
\end{proposition}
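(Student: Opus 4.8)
The plan is to transport the three ingredients behind Proposition~\ref{abrut} to the empirical inner product $\langle\cdot,\cdot\rangle_n$ on $\mathcal{E}_\delta$, keeping track of the fluctuations produced by replacing $\langle\cdot,\cdot\rangle$ by $\langle\cdot,\cdot\rangle_n$ and the projections $\Pi_{V_j}$ by $\hat\Pi_{V_j}$. The first step is a reduction by empirical backfitting. On $\mathcal{E}_\delta$ the vector $\hat\Pi_{V_2}f_2$ lies in $V_2$ and is fixed by $\hat\Pi_V$, so its $V_1$-component vanishes and $(\hat\Pi_Vf_2)_1=(\hat\Pi_V g)_1$ with $g:=(1-\hat\Pi_{V_2})f_2$; moreover $\hat\Pi_{V_2}g=0$. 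Applying Lemma~\ref{Arvn} to $(V,\langle\cdot,\cdot\rangle_n)$ and $\mathcal{H}_j=V_j$ as in the proof of Proposition~\ref{backf1}, and using $\hat\Pi_{V_2}g=0$ to telescope (as in the computation following \eqref{eq:trof}), one obtains $(\hat\Pi_Vg)_1=\sum_{j\ge0}(\hat\Pi_{V_1}\hat\Pi_{V_2})^j\hat\Pi_{V_1}g$, whence by \eqref{eq:pa1} and \eqref{eq:pa2}
\[
\Vert(\hat\Pi_Vf_2)_1\Vert_n\le\frac{1}{1-\rho_{0,\delta}^2}\,\Vert\hat\Pi_{V_1}g\Vert_n .
\]
Squaring, multiplying by $1_{\mathcal{E}_\delta}$, taking expectations and using $1/(1-\rho_{0,\delta}^2)\le\frac{1+\delta}{1-\delta}\,1/(1-\rho_0^2)$ (the estimate \eqref{eq:prere3}) reduces the claim to a bound on $\mathbb{E}[1_{\mathcal{E}_\delta}\Vert\hat\Pi_{V_1}g\Vert_n^2]$. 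Here $g=(1-\hat\Pi_{V_2})f_2=(1-\hat\Pi_{V_2})u$ with $u:=(1-\Pi_{V_2})f_2$ (since $\hat\Pi_{V_2}$ fixes $\Pi_{V_2}f_2$), and $\Vert u\Vert\le\phi(V_2)$, $\Vert u\Vert_\infty=\Vert(1-\Pi_{V_2})f_2\Vert_\infty$.

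For the deterministic main term, let $\{\phi_{1j}\}_j$ be an $L^2(\mathbb{P}^{X_1})$-orthonormal basis of $V_1$. On $\mathcal{E}_\delta$, arguing as in the proofs of Lemma~\ref{infty} and Theorem~\ref{mfg}, one has $\Vert\hat\Pi_{V_1}g\Vert_n^2\le\frac{1}{1-\delta}\sum_j\langle\phi_{1j},g\rangle_n^2$, and since $\hat\Pi_{V_2}$ is self-adjoint for $\langle\cdot,\cdot\rangle_n$ and idempotent, $\langle\phi_{1j},g\rangle_n=\langle(1-\hat\Pi_{V_2})\phi_{1j},u\rangle_n$. I would write $\langle(1-\hat\Pi_{V_2})\phi_{1j},u\rangle_n=\langle\phi_{1j},u\rangle+R_j$. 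The deterministic part contributes $\sum_j\langle\phi_{1j},u\rangle^2=\Vert\Pi_{V_1}u\Vert^2$, and Bessel's inequality together with the density regularity \eqref{eq:condjdens}, applied exactly as in the derivation of \eqref{eq:smarg} in the proof of Proposition~\ref{abrut} but pairing directly with $p(x_1,\cdot)/(p_1(x_1)p_2(\cdot))$ (so that $\psi(V_2)$, not $\psi_\Pi(V_2)$, appears), gives $\Vert\Pi_{V_1}u\Vert^2\le\Vert h_1\Vert^2(\psi(V_2)\phi(V_2))^2$. Using $\sum_j(a_j+b_j)^2\le2\sum_ja_j^2+2\sum_jb_j^2$, it then remains to control $\mathbb{E}[\sum_jR_j^2]$.

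For the fluctuations, since $u\perp V_2$ we have $\langle\phi_{1j},u\rangle=\langle(1-\Pi_{V_2})\phi_{1j},u\rangle$, so
\[
R_j=\big(\langle(1-\Pi_{V_2})\phi_{1j},u\rangle_n-\langle(1-\Pi_{V_2})\phi_{1j},u\rangle\big)+\langle(\Pi_{V_2}-\hat\Pi_{V_2})\phi_{1j},u\rangle_n .
\]
The first bracket is a centered empirical average of the fixed function $x\mapsto((1-\Pi_{V_2})\phi_{1j})(x)\,u(x_2)$; summing the variances, splitting $((1-\Pi_{V_2})\phi_{1j})(x_1,x_2)=\phi_{1j}(x_1)-(\Pi_{V_2}\phi_{1j})(x_2)$ and using the pointwise bound $\Vert\sum_j\phi_{1j}^2\Vert_\infty\le\varphi^2d_1$ (which holds by the hypothesis $\Vert g_1\Vert_\infty\le\varphi\sqrt{d_1}\Vert g_1\Vert$ and \cite[Lemma~1]{BM}, compare \eqref{eq:leq}) yields the term of order $\frac1n\varphi^2d_1\phi^2(V_2)$, while the $\Pi_{V_2}\phi_{1j}$-contribution together with the second bracket — which pairs the empirical projection error $(\hat\Pi_{V_2}-\Pi_{V_2})\phi_{1j}\in V_2$ against $u\perp V_2$ — is handled via Theorem~\ref{mfg}, the angle inequality \eqref{eq:backfcond}, and a Bessel/Hilbert--Schmidt argument that brings in \eqref{eq:condjdens} (in the guise of a bound on $K$ and its residual off $V_2$) and produces a term of order $\frac1n\Vert h_1\Vert^2\Vert(1-\Pi_{V_2})f_2\Vert_\infty^2\psi^2(V_2)$. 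Collecting the prefactors $(1-\rho_{0,\delta}^2)^{-2}\le(\tfrac{1+\delta}{1-\delta})^2(1-\rho_0^2)^{-2}$ from the reduction, $(1-\delta)^{-1}$ from the Bessel step, and the factor $2$ from the main/fluctuation split then gives exactly \eqref{eq:abrutempc}.

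The main obstacle is the bookkeeping in this last step: one must organize the expansion of $\langle(1-\hat\Pi_{V_2})\phi_{1j},u\rangle_n$ so that the variance sums collapse to precisely the two stated fluctuation terms — in particular so that the hypothesis is used in its $V_1$-form to obtain the dimension $d_1$ (rather than $d$), and so that the density condition \eqref{eq:condjdens} is invoked a second time, in a variance guise, to generate the factor $\Vert(1-\Pi_{V_2})f_2\Vert_\infty^2\psi^2(V_2)$ — while checking that no spurious contribution of order $\rho_0^2\phi^2(V_2)$ is left behind; the reason such a term does not appear is precisely that $u=(1-\Pi_{V_2})f_2$ is population-orthogonal to $V_2$, so the pairings against $V_2$ that would otherwise carry it have mean zero and enter only at order $1/n$.
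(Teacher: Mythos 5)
Your reduction step (empirical von Neumann series, \eqref{eq:pa1}--\eqref{eq:pa2}, and \eqref{eq:prere3}), the Bessel step $\Vert\hat{\Pi}_{V_1}g\Vert_n^2\leq\frac{1}{1-\delta}\sum_j\langle\phi_{1j},g\rangle_n^2$, the bound $\Vert\Pi_{V_1}(1-\Pi_{V_2})f_2\Vert\leq\Vert h_1\Vert\psi(V_2)\phi(V_2)$, and the $\varphi^2d_1\phi^2(V_2)/n$ piece coming from the pure $\phi_{1j}$-part are all sound and parallel the paper. The gap is in the remaining fluctuation term. After your two splits, what is left to control is exactly $\sum_j\langle\phi_{1j},\hat{\Pi}_{V_2}u\rangle_n^2$ (your $\Pi_{V_2}\phi_{1j}$-contribution and your second bracket recombine to $-\langle\phi_{1j},\hat{\Pi}_{V_2}u\rangle_n=-\langle\hat{\Pi}_{V_2}\phi_{1j},u\rangle_n$, since $\langle\Pi_{V_2}\phi_{1j},u\rangle=0$). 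You assert this is "handled via Theorem \ref{mfg}, the angle inequality \eqref{eq:backfcond}, and a Bessel/Hilbert--Schmidt argument" and yields $\tfrac1n\Vert h_1\Vert^2\Vert(1-\Pi_{V_2})f_2\Vert_\infty^2\psi^2(V_2)$, but no mechanism is given, and the natural estimates available at this point do not give that. Bounding $\sum_j\langle\phi_{1j},\hat{\Pi}_{V_2}u\rangle_n^2\lesssim(1+\delta)\rho_{0,\delta}^2\Vert\hat{\Pi}_{V_2}u\Vert_n^2$ and then $\mathbb{E}\Vert\hat{\Pi}_{V_2}u\Vert_n^2\lesssim\frac{\varphi^2d}{n}\phi^2(V_2)$ produces a term of order $\rho_0^2\frac{\varphi^2d}{n}\phi^2(V_2)$ with the \emph{full} dimension $d$, not $d_1$; bounding instead via $\sum_j\Vert\Pi_{V_2}\phi_{1j}\Vert^2$ produces $\frac1n\Vert(1-\Pi_{V_2})f_2\Vert_\infty^2\Vert\Pi_{V_2}|_{V_1}\Vert_{HS}^2$, a quantity controlled by $\Vert K\Vert_{HS}^2$ but \emph{not} by $\Vert h_1\Vert^2\psi^2(V_2)$ (the latter measures the residual of the kernel off $V_2$ and tends to $0$ as $V_2$ grows; the former does not). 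Neither extra term appears in, nor is dominated by, the right-hand side of \eqref{eq:abrutempc} (e.g.\ $\varphi^2d/n$ may be of order $1/\log^4n$ while $\Vert h_1\Vert^2\psi^2(V_2)$ is polynomially small), so the argument as sketched proves a strictly weaker statement — essentially of the Theorem \ref{thm4} type, where the $\varphi^2d/n$ term appears — and precisely misses the point of Proposition \ref{abrutemp}, which is to get $d_1$ and only $\psi(V_2)$.

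The missing idea is the paper's different split of the test functions: write $\phi_j=\phi_{j,\Pi_2}+(\phi_j-\phi_{j,\Pi_2})$, where $\phi_{j,\Pi_2}=\mathbb{E}[\phi_j(X_1)|X_2=\cdot]$. The residual $\phi_j-\phi_{j,\Pi_2}$ is conditionally centered given $X_2$, and $(1-\hat{\Pi}_{V_2})f_2$ evaluated at the design points is $X_2$-measurable, so in $\mathbb{E}\sum_j\langle\phi_j-\phi_{j,\Pi_2},(1-\hat{\Pi}_{V_2})f_2\rangle_n^2$ the off-diagonal terms vanish \emph{exactly}, giving $\frac{\varphi^2d_1}{n}\phi^2(V_2)$ with no $\rho_0$- or $d$-dependent leakage. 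For the $\phi_{j,\Pi_2}$-part, one uses Bessel in $L^2(\mathbb{P}^{X_1})$ against the kernel $p(x_1,\cdot)/(p_1(x_1)p_2(\cdot))$, moves $\hat{\Pi}_{V_2}$ onto the kernel by empirical self-adjointness/idempotence, dominates $\Vert(1-\hat{\Pi}_{V_2})\cdot\Vert_n$ by $\Vert(1-\Pi_{V_2})\cdot\Vert_n$ via the empirical projection theorem, and only then invokes \eqref{eq:condjdens} pointwise in $x_1$; expanding the expectation of the product of the two empirical norms yields \emph{both} the main term $\Vert h_1\Vert^2(\psi\phi)^2$ and the diagonal correction $\frac1n\Vert h_1\Vert^2\Vert(1-\Pi_{V_2})f_2\Vert_\infty^2\psi^2$ — so that last term has a different origin from the one you assign it. Your population-mean/fluctuation split, by separating $\Pi_{V_2}$ from $\hat{\Pi}_{V_2}$ on $\phi_{1j}$, destroys both of these mechanisms, and no rearrangement of the pieces you kept recovers them. (A minor additional point: your repeated factor-2 splits cannot reproduce the constant in \eqref{eq:abrutempc} "exactly" as claimed.)
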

\begin{proof} The proof is similar to the proof of Proposition \ref{abrut}. Throughout the proof, suppose that the event $\mathcal{E}_\delta$ holds. Then $\hat{\Pi}_V$ is a well-defined map from $L^2(\mathbb{P}^X)$ to $V$. Let $\phi_1,\dots,\phi_{d_1}$ be an orthonormal basis of $V_1$. By repeating the arguments at the beginning of the proof of Lemma \ref{infty}, we obtain
\begin{equation*}
 \Vert\hat{\Pi}_{V_1}(f_2-\hat{\Pi}_{V_2} f_2)\Vert_n^2\leq \frac{1}{1-\delta}\sum_{j=1}^{d_1}\langle\phi_j,(1-\hat{\Pi}_{V_2})f_2\rangle_n^2.
\end{equation*}
Thus
\begin{align}
&\mathbb{E}\left[1_{\mathcal{E}_\delta} \Vert\hat{\Pi}_{V_1}(f_2-\hat{\Pi}_{V_2} f_2)\Vert_n^2\right]\nonumber \\
 &\leq\frac{2}{(1-\delta)}\mathbb{E}\left[\sum_{j=1}^{d_1}\langle\phi_{j,\Pi_2},(1-\hat{\Pi}_{V_2})f_2\rangle_n^2\right] \label{eq:abempc}\\
 &+\frac{2}{(1-\delta)}\mathbb{E}\left[\sum_{j=1}^{d_1}\langle\phi_j-\phi_{j,\Pi_2},(1-\hat{\Pi}_{V_2})f_2\rangle_n^2\right] ,\label{eq:abempze}
\end{align}
where 
\begin{equation*}
 \phi_{j,\Pi_2}(x_2)=\int \phi_j(x_1)\frac{p(x_1,x_2)}{p_1(x_1)p_2(x_2)}p_1(x_1)d\nu_1(x_1)
\end{equation*}
is the conditional expectation of $\phi_j(X_1)$ given $X_2=x_2$ (for $\mathbb{P}^{X_2}$-almost all $x_2$, by Assumption \ref{HSce}). The expectation in  \eqref{eq:abempc} is equal to
\begin{align*}
&\mathbb{E}\left[\sum_{j=1}^{d_1}\left( \int\left\langle\frac{p(x_1,\cdot)}{p_1(x_1)p_2(\cdot)},(1-\hat{\Pi}_{V_2})f_2\right\rangle_n\phi_j(x_1)p_1(x_1)d\nu_1(x_1) \right) ^2\right] \\
&\leq \int\mathbb{E}\left[\left\langle\frac{p(x_1,\cdot)}{p_1(x_1)p_2(\cdot)},(1-\hat{\Pi}_{V_2})f_2\right\rangle_n^2 \right]p_1(x_1)d\nu_1(x_1), 
\end{align*}
where we applied Bessel's inequality and Fubini's theorem in the last inequality.
Applying the fact that orthogonal projections are idempotent and self-adjoint and then the Cauchy-Schwarz inequality, this is
\begin{equation*}
\leq  \int\mathbb{E}\left[\left\Vert(1-\hat{\Pi}_{V_2})\frac{p(x_1,\cdot)}{p_1(x_1)p_2(\cdot)}\right\Vert_n^2\left\Vert(1-\hat{\Pi}_{V_2})f_2\right
\Vert_n^2\right]p_1(x_1)d\nu_1(x_1) .
\end{equation*}
Applying the projection theorem and then \eqref{eq:condjdens}, this is
\begin{align*}
&\leq  \int\mathbb{E}\left[\left\Vert(1-\Pi_{V_2})\frac{p(x_1,\cdot)}{p_1(x_1)p_2(\cdot)}\right\Vert_n^2\left\Vert(1-\Pi_{V_2})f_2\right
\Vert_n^2\right]p_1(x_1)d\nu_1(x_1)  \\
&\leq \frac{n-1}{n} \Vert h_1\Vert^2(\psi(V_2)\phi(V_2))^2+\frac{1}{n}\Vert h_1\Vert^2\Vert(1-\Pi_{V_2})f_2\Vert_{\infty}^2(\psi(V_2))^2.
\end{align*}
Now we turn to the expectation in \eqref{eq:abempze}. We have
\begin{align*}
\mathbb{E}\left[ \sum_{j=1}^{d_1}\langle\phi_j-\phi_{j,\Pi_2},(1-\hat{\Pi}_{V_2})f_2\rangle_n^2\right]&\leq \frac{\varphi^2d_1}{n}\mathbb{E}\left[ \Vert (1-\hat{\Pi}_{V_2})f_2\Vert^2_n\right]\\&\leq \frac{\varphi^2d_1}{n}\Vert (1-\Pi_{V_2})f_2\Vert^2.
\end{align*}
To prove the first inequality, first note that the $((1-\hat{\Pi}_{V_2})f_2)(X_2^i)$ depend only on $X_2^1,\dots,X_2^n$ and we have
\begin{equation*}
\mathbb{E}\left[ (\phi_j-\phi_{j,\Pi_2})(X^i)|X_2^1,\dots,X_2^n,X_1^{i'}\right]=\mathbb{E}\left[ (\phi_j-\phi_{j,\Pi_2})(X^i)|X_2^i \right]=0
\end{equation*}
for $i\neq i'$. This implies that the nondiagonal terms vanish. Next, apply the inequalities $\mathbb{E}[(\phi_j-\phi_{j,\Pi_2})^2(X)|X_2]\leq \mathbb{E}[\phi_j^2(X_1)|X_2]$ and \[\Big\Vert\sum_{j=1}^{d_1}\phi_j^2\Big\Vert_\infty\leq\varphi^2d_1,\] 
which follows from the bound $\Vert g_1\Vert_\infty\leq\varphi\sqrt{d_1}\Vert g_1\Vert$, for all $g_1\in V_1$, and \cite[Lemma 1]{BM}.
Thus we have shown that
\begin{multline}\label{eq:smargem}
 \mathbb{E}\left[ 1_{\mathcal{E}_\delta}\Vert\hat{\Pi}_{V_1}(f_2-\hat{\Pi}_{V_2} f_2)\Vert_n^2\right]
\leq\frac{2}{(1-\delta)}\left(\Vert h_1\Vert^2(\psi(V_2)\phi(V_2))^2\right.\\
\left.+\frac{1}{n}\Vert h_1\Vert^2\Vert(1-\Pi_{V_2})f_2\Vert_{\infty}^2(\psi(V_2))^2+\phi^2(V_2)\frac{\varphi^2d_1}{n}\right).
\end{multline}
The remaining arguments are as in the proof of Proposition \ref{abrut}. From \eqref{eq:backfcond} and Lemma \ref{Arvn} we have
\begin{equation}\label{eq:arpconem}
\big\Vert (\hat{\Pi}_Vh)_1-\big(\hat{\Pi}_{V_1}-\sum_{j=1}^k(\hat{\Pi}_{V_1}\hat{\Pi}_{V_2})^j(1-\hat{\Pi}_{V_1})\big)h\big\Vert_n \rightarrow 0
\end{equation}
as $k\rightarrow\infty$, for all $h\in L^2(\mathbb{P}^X)$. 
Applying \eqref{eq:pa1} and \eqref{eq:pa2} as in \eqref{eq:backpa}, we obtain
\begin{align}
 &\big\Vert  \big(\hat{\Pi}_{V_1}-\sum_{j=1}^k(\hat{\Pi}_{V_1}\hat{\Pi}_{V_2})^j(1-\hat{\Pi}_{V_1})\big)(f_2-\hat{\Pi}_{V_2}f_2)\big\Vert_n\nonumber\\
&\leq\frac{1}{1-\rho_{0,\delta}^2}\Vert \hat{\Pi}_{V_1}(f_2-\hat{\Pi}_{V_2}f_2)\Vert_n.\label{eq:backpaem}
\end{align}
From \eqref{eq:arpconem} and \eqref{eq:backpaem}, we conclude that
\begin{equation*}
\Vert(\hat{\Pi}_Vf_2)_1\Vert_n^2=\Vert(\hat{\Pi}_V(f_2-\hat{\Pi}_{V_2}f_2))_1\Vert_n^2
\leq\frac{1}{(1-\rho_{0,\delta}^2)^2}\Vert \hat{\Pi}_{V_1}(f_2-\hat{\Pi}_{V_2}f_2)\Vert_n^2.
\end{equation*}
Combining this with \eqref{eq:smargem} and \eqref{eq:prere3} gives \eqref{eq:abrutempc}. This completes the proof.
\end{proof}
\subsection{End of proof of Theorem \ref{thm3} and \ref{thm4}}
The only place where we modify the proof of Theorem \ref{thm2} is the analysis of the term
\begin{equation}\label{impr}
\frac{2}{(1-\delta)} \mathbb{E}\left[1_{\mathcal{E}_\delta}\Vert f_1-(\hat{\Pi}_Vf)_1\Vert_n^2 \right],
\end{equation}
which we bounded by
\begin{equation}\label{eq:upbf}
\frac{1+\delta}{(1-\delta)^2}\frac{6}{1-\rho_0^2}\left(\Vert f_1-\Pi_{V_1}f_1\Vert^2+\Vert f_2-\Pi_{V_2}f_2\Vert^2\right),
\end{equation}
by using Lemma \ref{lb2}. We show that, under the additional Assumptions \eqref{eq:condjdens} or \eqref{eq:condjdenspr}, one can replace the upper bound  \eqref{eq:upbf} by the ones given in Theorem \ref{thm3} and \ref{thm4}, respectively. To achieve this, we replace 
 Lemma \ref{lb2} by Proposition \ref{abrut} and \ref{abrutemp}.

In order to proof Theorem \ref{thm3} we decompose
\begin{equation*}
 f_1-(\hat{\Pi}_Vf)_1=f_1-(\Pi_Vf_1)_1+(\Pi_Vf_1)_1-(\hat{\Pi}_Vf_1)_1-(\hat{\Pi}_Vf_2)_1.
\end{equation*}
Thus
\begin{align*}
 \mathbb{E}\left[1_{\mathcal{E}_\delta}\|(f_1-(\hat{\Pi}_Vf_1)_1\|^2\right]
&\leq 3\mathbb{E}\left[\|f_1-(\Pi_Vf_1)_1\|_n^2\right]\\
&+3\mathbb{E}\left[1_{\mathcal{E}_\delta}\|(\Pi_Vf_1)_1-(\hat{\Pi}_Vf_1)_1\|_n^2\right]\\
&+3\mathbb{E}\left[1_{\mathcal{E}_\delta}\|(\hat{\Pi}_Vf_2)_1\|_n^2\right].
\end{align*}
The third term on the right-hand side is part of Proposition \ref{abrutemp}.
Using Lemma \ref{angleequiv} and the projection theorem, the first term can be bounded by
\begin{align}
 \mathbb{E}\left[\|f_1-(\Pi_Vf_1)_1\|_n^2\right]&=\|f_1-(\Pi_Vf_1)_1\|^2\nonumber\\
&\leq\frac{1}{(1-\rho_0^2)}\|f_1-\Pi_Vf_1\|^2\nonumber\\
&\leq\frac{1}{(1-\rho_0^2)}\|f_1-\Pi_{V_1}f_1\|^2.\label{eq:xvy}
\end{align}
Applying Proposition \ref{empangle} and the projection theorem, the second term can be bounded by
\begin{align*}
 &\mathbb{E}\left[1_{\mathcal{E}_\delta}\|(\Pi_Vf_1)_1-(\hat{\Pi}_Vf_1)_1\|_n^2\right]\\
&\leq\frac{(1+\delta)}{(1-\delta)}\frac{1}{(1-\rho_0^2)} \mathbb{E}\left[\|\Pi_Vf_1-\hat{\Pi}_Vf_1\|_n^2\right]\\
&\leq\frac{(1+\delta)}{(1-\delta)}\frac{1}{(1-\rho_0^2)} \mathbb{E}\left[\|f_1-\Pi_Vf_1\|_n^2\right]\\
&\leq\frac{(1+\delta)}{(1-\delta)}\frac{1}{(1-\rho_0^2)}\|f_1-\Pi_{V_1}f_1\|^2.
\end{align*}
This completes the proof of Theorem \ref{thm3}.
In order to proof Theorem \ref{thm4} we decompose
\begin{equation*}
 f_1-(\hat{\Pi}_Vf)_1=f_1-(\Pi_Vf_1)_1-(\Pi_Vf_2)_1+(\Pi_Vf)_1-(\hat{\Pi}_Vf)_1.
\end{equation*}
Thus
\begin{align*}
 \mathbb{E}\left[1_{\mathcal{E}_\delta}\|(f_1-(\hat{\Pi}_Vf_1)_1\|^2\right]
&\leq 3\mathbb{E}\left[\|f_1-(\Pi_Vf_1)_1\|_n^2\right]\\
&+3\mathbb{E}\left[\|(\Pi_Vf_2)_1\|_n^2\right]\\
&+3\mathbb{E}\left[1_{\mathcal{E}_\delta}\|(\Pi_Vf)_1-(\hat{\Pi}_Vf)_1\|_n^2\right].
\end{align*}
The first term on the right-hand side is bounded in \eqref{eq:xvy}, the second one in Proposition \ref{abrut}. Using the definition of $\mathcal{E}_\delta$ and Lemma \ref{angleequiv}, we obtain 
\begin{align*}
 \mathbb{E}\left[1_{\mathcal{E}_\delta}\|(\Pi_Vf)_1-(\hat{\Pi}_Vf)_1\|_n^2\right]
&\leq (1+\delta)\mathbb{E}\left[1_{\mathcal{E}_\delta}\|(\Pi_Vf)_1-(\hat{\Pi}_Vf)_1\|^2\right]\\
&\leq (1+\delta)\frac{1}{(1-\rho_0^2)} \mathbb{E}\left[1_{\mathcal{E}_\delta}\|\Pi_Vf-\hat{\Pi}_Vf\|^2\right].
\end{align*}
By Proposition \ref{infty}, this can be bounded by
\[\frac{(1+\delta)}{(1-\delta)^2}\frac{2}{(1-\rho_0^2)} \frac{\varphi^2d}{n}\left(\|f_1-\Pi_{V_1} f_1\|^2+\|f_2-\Pi_{V_2} f_2\|^2\right).\]
This completes the proof.\qed

\appendix 
\section{Proof of Lemma \ref{angleequiv}}\label{app2a}
We first show how (i) implies (ii) and (iii). Let $h_1\in \mathcal{H}_1$ and $h_2\in\mathcal{H}_2$. Then by (i) we have $\Vert h_1+h_2\Vert^2\geq \Vert h_1\Vert^2-2\varrho\Vert h_1\Vert\Vert h_2\Vert+\Vert h_2\Vert^2$ and (ii) follows from the inequality $2\Vert h_1\Vert\Vert h_2\Vert\leq \Vert h_1\Vert^2+\Vert h_2\Vert^2$, while (iii) follows from $2\varrho\Vert h_1\Vert\Vert h_2\Vert\leq \varrho^2\Vert h_1\Vert^2+\Vert h_2\Vert^2$.

Next, we show how (ii) implies (i). Let $0\neq h_1\in \mathcal{H}_1$ and $0\neq h_2\in\mathcal{H}_2$. We may assume without loss of generality that $\Vert h_1\Vert=\Vert h_2\Vert=1$ and that $\langle h_1,h_2\rangle\geq 0$. Then by (ii) we have $2-2\langle h_1,h_2\rangle=\|h_1-h_2\|^2\geq 2(1-\varrho)$ which gives (i). 

Finally, suppose that (iii) is true. Let $0\neq h_1\in \mathcal{H}_1$ and $0\neq h_2\in\mathcal{H}_2$. Again, we may assume that $\Vert h_1\Vert=\Vert h_2\Vert=1$. Then by (iii) we have $1-\langle h_1,h_2\rangle^2=\|h_1-\langle h_1,h_2\rangle h_2\|^2\geq 1-\varrho^2$ which gives (i). This completes the proof.\qed

\section{A feasible estimator}\label{app2f}
In this appendix, we show that estimators based on the condition $(1/n)\sum_{i=1}^ng_1(X_1^i)=0$ have (up to a constant and a term of smaller order) the same risk bound as our estimators based on the condition $\mathbb{E}\left[ g_1(X_1)\right]=0$. We only sketch the main arguments in the case $W_1=V_1$.
Suppose that we choose $U_1\subset L^2(\mathbb{P}^{X_1})$ and $V_2\subset L^2(\mathbb{P}^{X_2})$, where $V_2$ contains all constant functions. Let 
$V_1'=\lbrace g_1\in U_1|(1/n)\sum_{i=1}^n g_1(X_1^i)=0 \rbrace$ and $V_1=\lbrace g_1\in U_1|\mathbb{E}\left[ g_1(X_1)\right]=0 \rbrace$. Since $V_2$ contains all constants, we have $V=V_1+V_2=V_1'+V_2$. This implies that the first components of $\hat{f}_{V_1+V_2}$ and $\hat{f}_{V_1'+V_2}$ in $V_1$ and $V_1'$, respectively, differ only by the constant
\[\frac{1}{n}\sum_{i=1}^n(\hat{f}_{V})_1(X_1^i).\]
The risk of this constant can be bounded as follows. By the bound $(x+y)^2\leq 2x^2+2y^2$, we have
\begin{align*}
 &\mathbb{E}\left[ 1_{\mathcal{E}_\delta}\left(\frac{1}{n} \sum_{i=1}^n(\hat{f}_{V})_1(X_1^i)\right) ^2\right] \\
 &\leq 2\mathbb{E}\left[1_{\mathcal{E}_\delta} \left(\frac{1}{n}\sum_{i=1}^n(\hat{f}_{V})_1(X_1^i)-f_1(X_1^i)\right) ^2\right]\\
 &+2\mathbb{E}\left[1_{\mathcal{E}_\delta} \left(\frac{1}{n}\sum_{i=1}^nf_1(X_1^i)\right) ^2\right].
\end{align*}
Applying the Cauchy-Schwarz inequality and the fact that the $f_1(X_1^i)$ are independent and centered, this can be bounded by
\[2\mathbb{E}\left[ 1_{\mathcal{E}_\delta}\Vert (\hat{f}_{V})_1-f_1\Vert_n^2\right]+ \frac{2\Vert f_1\Vert^2}{n}.\]
Now apply Lemma \ref{lb2} to the first term.

\section{Proof of Lemma \ref{varphis} and \ref{varphi}}\label{app212}
First, we prove Lemma \ref{varphis}. In Section \ref{a1dim}, we have shown that
\[\left\|g_1\right\|_\infty^2\leq \varphi_1^2d_1\left\|g_1\right\|^2\]
and
\[\left\|g_{j2}\right\|_\infty^2\leq \varphi_2^2d_{j2}\left\|g_{j2}\right\|^2\]
for all $g_1\in V_1$, $g_{j2}\in V_{j2}$, $1\leq j\leq q-1$, with $\varphi_1^2=2/c$ and $\varphi_2^2=2/c$.
Now let $g=g_1+g_2\in V$. Suppose that $g_2=\sum_{j=1}^{q-1}g_{2j}$ is the decomposition satisfying \eqref{eq:kobcond}. Applying the above bounds, the Cauchy-Schwarz inequality, and Assumption \ref{ac}, we obtain
\[
\left\| g_2\right\|_\infty\leq \sum_{j=1}^{q-1}\varphi_2\sqrt{d_{2j}}\left\|g_{2j}\right\|
\leq \frac{\varphi_2}{\sqrt{1-\epsilon_2}}\sqrt{\sum_{j=1}^ {q-1} d_{2j}}\left\|g_2\right\|.
\]
Applying again the Cauchy-Schwarz inequality and then Assumption \ref{angle} and Lemma \ref{angleequiv}, we conclude that
\begin{align}
 \Vert g_1+g_2\Vert_\infty &\leq\varphi_1\sqrt{d_1}\Vert g_1\Vert+\frac{\varphi_2}{\sqrt{1-\epsilon_2}}\sqrt{\sum_{j=1}^ {q-1} d_{2j}}\Vert g_2\|\nonumber \\
 & \leq\sqrt{\frac{\varphi_1\vee\varphi_2}{(1-\epsilon_2)(1-\rho_0)}}\sqrt{d_1+\sum_{j=1}^ {q-1} d_{2j}}\Vert g_1+g_2\Vert.\label{eq:stma}
\end{align}
This completes the proof of Lemma \ref{varphis}. The proof of Lemma \ref{varphi} is similar.
In \cite{BM2}, it is shown that 
\[\left\|g_1\right\|_\infty^2\leq (r_1+1)^2m_1\int_0^1g_1^2(x_1)dx_1\]
and 
\[\left\|g_{j2}\right\|_\infty^2\leq (r_2+1)^2m_2\int_0^1g_{j2}^2(x_{j2})dx_{j2}\]
for all $g_1\in V_1$, $g_{j2}\in V_{j2}$, $1\leq j\leq q-1$. This implies that
\[\left\|g_1\right\|_\infty^2\leq \varphi_1^2d_1\left\|g_1\right\|^2\]
and
\[\left\|g_{j2}\right\|_\infty^2\leq \varphi_2^2d_{j2}\left\|g_{j2}\right\|^2\]
with $\varphi_1^2=2(r_1+1)/c$ and $\varphi_2^2=2(r_2+1)/c$. 
Now proceed as above. 
This completes the proof.
\qed
\section{Proof of Corollary \ref{corlemmab}}\label{app2b}
\begin{lemma}\label{lemmab} Let Assumption \ref{HSce} and \ref{ac} be satisfied. 
Suppose that \eqref{eq:jointdens} and \eqref{eq:jointdens2} are satisfied.
Then \eqref{eq:condjdenspr} is satisfied with \[\psi_\Pi(V_2)=\sqrt{C_3}\sqrt{\sum_{k=1}^{q-1}d_{2k}^{-2\beta}}\] and
\begin{equation*}
 h_1(x_1)=\sqrt{\sum_{k=1}^qh^2_{1k}(x_1)}+
 \sqrt{\frac{c'}{1-\epsilon_2}\int\bigg(\frac{p_X(x_1,x_2)}{p_{X_1}(x_1)p_{X_2}(x_2)}\bigg)^2p_{X_2}(x_2) dx_2},
\end{equation*}
where $c'=\sum_{j,k=1,j\neq k}^{q-1}\Vert h'_{jk}\Vert^2$. Note that $h_1\in L^2(\mathbb{P}^{X_1})$, by Assumption \ref{HSce}.
\end{lemma}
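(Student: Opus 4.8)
The plan is to realise $r(x_1,\cdot)$ as $\Pi_{H_2}k_{x_1}$, where $k_{x_1}=p(x_1,\cdot)/(p_1(x_1)p_2(\cdot))\in L^2(\mathbb{P}^{X_2})$ for $\mathbb{P}^{X_1}$-almost all $x_1$ (Assumption \ref{HSce}), and to exploit that $\langle k_{x_1},h_2\rangle_{L^2(\mathbb{P}^{X_2})}=\mathbb{E}[h_2(X_2)\mid X_1=x_1]$ for $h_2\in L^2(\mathbb{P}^{X_2})$. Restricting to functions of a single coordinate, $\Pi_{H_{2j}}k_{x_1}=s_j(x_1,\cdot)$, where $s_j(x_1,\cdot)$ is $x_{2j}\mapsto p_{X_1,X_{2j}}(x_1,x_{2j})/(p_{X_1}(x_1)p_{X_{2j}}(x_{2j}))$ minus the centering constant $\mathbb{E}[k_{x_1}(X_2)]=1$ when $j\le q-2$ (and without centering when $j=q-1$); by \eqref{eq:jointdens} its $\mathcal H(\beta,\cdot)$-norm is at most $h_{1j}(x_1)$. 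Note also that $\|k_{x_1}\|^2=\int (p(x_1,x_2)/(p_1(x_1)p_2(x_2)))^2\,p_2(x_2)\,d\nu_2(x_2)$, so $\int\|k_{x_1}\|^2\,d\mathbb{P}^{X_1}=\|K\|_{HS}^2<\infty$.

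Next I would pick, via Assumption \ref{ac}, a decomposition $r(x_1,\cdot)=\sum_{j=1}^{q-1}\tilde r_j(x_1,\cdot)$ with $\tilde r_j(x_1,\cdot)\in H_{2j}$ and $\sum_j\|\tilde r_j(x_1,\cdot)\|^2\le (1-\epsilon_2)^{-1}\|r(x_1,\cdot)\|^2\le (1-\epsilon_2)^{-1}\|k_{x_1}\|^2$ (the last step since orthogonal projections do not increase the norm). Applying $\Pi_{H_{2j}}$ to $r(x_1,\cdot)=\sum_k\tilde r_k(x_1,\cdot)$ yields the self-consistency identity
\[\tilde r_j(x_1,\cdot)=s_j(x_1,\cdot)-\sum_{k\ne j}\Pi_{H_{2j}}\tilde r_k(x_1,\cdot).\]
Now each cross term $\Pi_{H_{2j}}\tilde r_k(x_1,\cdot)$ is, up to a centering constant, the conditional expectation $x_{2j}\mapsto\int \tilde r_k(x_1,x_{2k})\,p_{X_{2j},X_{2k}}(x_{2j},x_{2k})/(p_{X_{2j}}(x_{2j})p_{X_{2k}}(x_{2k}))\,d\mathbb{P}^{X_{2k}}(x_{2k})$; differentiating under the integral sign and using \eqref{eq:jointdens2} (with the roles of $j$ and $k$ exchanged) bounds its $\mathcal H(\beta,\cdot)$-norm by $\int|\tilde r_k(x_1,\cdot)|\,h'_{kj}\,d\mathbb{P}^{X_{2k}}\le\|\tilde r_k(x_1,\cdot)\|\,\|h'_{kj}\|$. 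Hence the $\mathcal H(\beta,\cdot)$-norm $L_j(x_1)$ of $\tilde r_j(x_1,\cdot)$ obeys $L_j(x_1)\le h_{1j}(x_1)+\sum_{k\ne j}\|\tilde r_k(x_1,\cdot)\|\,\|h'_{kj}\|$.

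Then I would conclude by approximation. Since $V_{2j}$ contains piecewise polynomials of degree $\ge\lfloor\beta\rfloor$ (as in Section \ref{aamsd}), the bound \eqref{eq:apprpr2}, together with a routine centering correction absorbed into $C_3$, gives $\|\tilde r_j(x_1,\cdot)-\Pi_{V_{2j}}\tilde r_j(x_1,\cdot)\|\le\sqrt{C_3}\,L_j(x_1)\,d_{2j}^{-\beta}$. As $\sum_j\Pi_{V_{2j}}\tilde r_j(x_1,\cdot)\in V_2$ and $\Pi_{V_2}$ minimises the distance to $V_2$,
\[\|(1-\Pi_{V_2})r(x_1,\cdot)\|\le\Big\|\sum_j\big(\tilde r_j-\Pi_{V_{2j}}\tilde r_j\big)(x_1,\cdot)\Big\|\le\sum_j\|\tilde r_j-\Pi_{V_{2j}}\tilde r_j\|\le\sqrt{C_3}\sum_jL_j(x_1)\,d_{2j}^{-\beta},\]
and Cauchy--Schwarz turns the last sum into $\big(\sum_jL_j(x_1)^2\big)^{1/2}\sqrt{C_3}\big(\sum_jd_{2j}^{-2\beta}\big)^{1/2}=\big(\sum_jL_j(x_1)^2\big)^{1/2}\psi_\Pi(V_2)$. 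Finally $\big(\sum_jL_j(x_1)^2\big)^{1/2}\le\big(\sum_jh_{1j}(x_1)^2\big)^{1/2}+\big(\sum_j(\sum_{k\ne j}\|\tilde r_k(x_1,\cdot)\|\|h'_{kj}\|)^2\big)^{1/2}$, where by Cauchy--Schwarz, $\sum_k\|\tilde r_k(x_1,\cdot)\|^2\le(1-\epsilon_2)^{-1}\|k_{x_1}\|^2$, and $\sum_j\sum_{k\ne j}\|h'_{kj}\|^2=c'$ the second term is at most $\sqrt{c'/(1-\epsilon_2)}\,\|k_{x_1}\|$; this gives exactly $h_1(x_1)$, and $h_1\in L^2(\mathbb{P}^{X_1})$ by Assumption \ref{HSce} and $h_{1k}\in L^2(\mathbb{P}^{X_1})$.

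The step I expect to be the main obstacle is the second one: identifying $\Pi_{H_{2j}}$, restricted to functions of a single coordinate, with the corresponding conditional-expectation integral operator (which relies on Assumption \ref{HSce}), rigorously justifying the differentiation under the integral sign and the resulting $\mathcal H(\beta,\cdot)$-norm transfer from \eqref{eq:jointdens2}, and --- perhaps less obviously --- recognising that one should work with the norm-controlled decomposition furnished by Assumption \ref{ac} (rather than, say, a backfitting decomposition), so that $\sum_k\|\tilde r_k(x_1,\cdot)\|^2$ is controlled by $\|k_{x_1}\|^2/(1-\epsilon_2)$; the self-consistency identity then automatically transfers the smoothness of the $s_j$ and of the pairwise density ratios to every component. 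The bookkeeping with the centering constants for $j\le q-2$ and their absorption into $C_3$ is routine.
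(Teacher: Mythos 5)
Your proposal is correct and follows essentially the same route as the paper's proof: you use the norm-controlled decomposition of $r(x_1,\cdot)$ from Assumption \ref{ac}, derive the same normal (self-consistency) equations whose cross terms are the pairwise conditional-expectation operators, transfer H\"{o}lder smoothness via \eqref{eq:jointdens}, \eqref{eq:jointdens2} and Cauchy--Schwarz, and then combine the componentwise approximation errors with Cauchy--Schwarz and the contractivity of orthogonal projections to obtain exactly the stated $\psi_\Pi(V_2)$ and $h_1$. The only differences are presentational (abstract $\Pi_{H_{2j}}$ notation versus explicit kernels, and the order in which the final Cauchy--Schwarz steps are performed), not mathematical.
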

\begin{proof}
Let $x_1$ be fixed (such that $p_X(x_1,\cdot)/(p_{X_1}(x_1)p_{X_2}(\cdot))\in L^2(\mathbb{P}^{X_2})$, which is satisfied for $\mathbb{P}^{X_1}$-almost all $x_1$, by Assumption \ref{HSce}). By the projection theorem, the expression
\begin{equation*}
\int\left(\frac{p_X(x_1,x_2)}{p_{X_1}(x_1)p_{X_2}(x_2)}-g(x_2)\right)^2p_{2}(x_2)dx_2,
\end{equation*}
subject to the constraints $g\in H_2$, is minimized by $r$. Suppose that $r=\sum_{k=1}^{q-1}r_{k}$ is the decomposition such that \eqref{eq:kobcond} is satisfied (note that we omit the dependence of $r$ and the $r_{k}$ on $x_1$). For $k=1,\dots,q-1$, we have 
\begin{equation}\label{eq:condexf}
 \mathbb{E}\left[\frac{p_X(x_1,X_2)}{p_{X_1}(x_1)p_{X_2}(X_2)}\bigg|X_{2k}=x_{2k}\right] =\frac{p_{X_1,X_{2k}}(x_1,x_{2k})}{p_{X_1}(x_1)p_{X_{2k}}(x_{2k})}.
\end{equation}
Thus the $r_{k}$ satisfy the $q-1$ equations
\begin{align*}
 &r_{k}(x_{2k})=\\&\frac{p_{X_1,X_{2k}}(x_1,x_{2k})}{p_{X_1}(x_1)p_{X_{2k}}(x_{2k})}-\sum_{j=1,j\neq k}^{q-1}\int r_{j}(x_{2j})\frac{p_{X_{2j},X_{2k}}(x_{2j},x_{2k})}{p_{X_{2j}}(x_{2j})p_{X_{2k}}(x_{2k})}p_{X_{2j}}(x_{2j})dx_{2j},
\end{align*}
for $\mathbb{P}^{X_{2k}}$-almost all $x_{2k}$, $1\leq k\leq q-1$ (note again that we omit the dependence of the $r_{k}$ on $x_1$). By \eqref{eq:jointdens}, \eqref{eq:jointdens2}, and the Cauchy-Schwarz inequality, the first and the second term on the right hand side are contained in $\mathcal{H}(\beta,h_{1k}(x_1))$ and $\mathcal{H}(\beta,\sum_{j=1,j\neq k}^{q-1}\Vert r_{j}\Vert_{L^2(\mathbb{P}^{X_{2j}})}\Vert h'_{jk}\Vert)$, respectively. We conclude that
\begin{align*}
 &\Vert r-\Pi_{V_2}r\Vert_{L^2(\mathbb{P}^{X_2})}\\
&\leq \sum_{k=1}^{q-1}\Vert r_{k}-\Pi_{V_{2k}}r_{k}\Vert_{L^2(\mathbb{P}^{X_{2k}})}\\
 &\leq \sum_{k=1}^{q-1}C_3\left( h_{1k}(x_1)+\sum_{j=1,j\neq k}^{q-1}\Vert r_{j}\Vert_{L^2(\mathbb{P}^{X_{2j}})}\Vert h'_{jk}\Vert\right) d_{2k}^{-\beta}.
\end{align*}
Applying the Cauchy-Schwarz inequality and  Assumption \ref{ac}, this is bounded by
\begin{equation*}
\leq \sum_{k=1}^{q-1}C_3d_{2k}^{-\beta} \left(h_{1k}(x_1)+\sqrt{\frac{\Vert r\Vert^2_{L^2(\mathbb{P}^{X_2})}}{1-\epsilon_2}\sum_{j=1,j\neq k}^{q-1}\Vert h'_{jk}\Vert^2}\right).
\end{equation*}
Applying the Cauchy-Schwarz inequality again and the fact that orthogonal projections lower the norm, we obtain the claimed $\psi_\Pi(V_2)$ and $h_1(x_1)$. This completes the proof.
\end{proof}

\section{Proof of Lemma \ref{trlem}}\label{app2h}
We only proof (iii), since (i) and (ii) are standard. By the spectral theorem, there exists an orthogonal matrix $V$ and nonnegative real numbers $\lambda_1(B),\dots,\lambda_{k_1}(B)$ such that
\begin{equation}\label{eq:spthm}
 B=V^T\operatorname{diag}(\lambda_1(B),\dots,\lambda_{k_1}(B))V.
\end{equation} 
Now, by the Cauchy-Schwarz inequality, each entry of a matrix is bounded by the operator norm of that matrix. In particular, we have $|(VAV^T)_{jk}|\leq \Vert VAV^T\Vert_{\operatorname{op}}=\Vert A\Vert_{\operatorname{op}}$ for all $j,k$, since $V$ is orthogonal. Applying \eqref{eq:spthm}, part (i) of this Lemma, the fact that the $\lambda_j(B)$ are nonnegative, and the previous argument, we obtain
\begin{align*}
 |\operatorname{tr}(AB)|&=\Big|\sum_{j=1}^{k_1}(VAV^T)_{jj}\lambda_j(B)\Big|\\&\leq \max_{j=1,\dots,k_1}|(VAV^T)_{jj}|\operatorname{tr}(B)\leq \Vert A\Vert_{\operatorname{op}}\operatorname{tr}(B).
\end{align*}
This completes the proof.\qed

\section{Proof of \eqref{eq:pr1}}\label{app2ha} In this appendix, we prove \eqref{eq:pr1}. As mentioned in the proof of Theorem \ref{thm1}, the main arguments are taken from \cite[page 139 and 140]{B}. We define the event $\mathcal{A}=\lbrace\Vert \hat{f}_1\Vert_\infty\leq k_n \rbrace$. Then
\begin{align*}
&\mathbb{E}\left[ \Vert f_1-\hat{f}_{1}^*\Vert^2\right]=\mathbb{E}\left[(1_{\mathcal{E}_\delta}1_{\mathcal{A}}+
1_{\mathcal{E}_\delta}1_{\mathcal{A}^c}+1_{\mathcal{E}_\delta^c}) \Vert f_1-\hat{f}_{1}^*\Vert^2\right]\\
&\leq \mathbb{E}\left[1_{\mathcal{E}_\delta} \Vert f_1-\hat{f}_1\Vert^2\right]
+ \mathbb{E}\left[1_{\mathcal{E}_\delta}1_{\mathcal{A}^c} \Vert f_1\Vert^2\right]
+ \mathbb{E}\left[1_{\mathcal{E}_\delta^c}(\Vert f_1\Vert+k_n)^2\right].
\end{align*}
Thus it remains to consider the last two expressions. By Theorem \ref{mfg}, the last one is bounded by
\[2^{3/4}(\Vert f_1\Vert+k_n)^2 d\exp\left( -\kappa\frac{n\delta^2}{\varphi^2d}\right).\]
Consider the other one. By Assumption \ref{assinfty}, we have $\Vert \hat{f}_1\Vert^2_\infty\leq \varphi^2d\Vert \hat{f}_1\Vert^2$. If $\mathcal{E}_{\delta}$ holds, then
\begin{align*}
\Vert \hat{f}_1\Vert^2_\infty\leq\frac{\varphi^2d}{(1-\delta)}\Vert \hat{\Pi}_{W_1}(\hat{\Pi}_V\mathbf{Y})_1\Vert^2_n
\leq \frac{\varphi^2d}{(1-\delta)}\Vert (\hat{\Pi}_V\mathbf{Y})_1\Vert^2_n,
\end{align*}
where we applied the definition of $\mathcal{E}_{\delta}$ and the fact that projections lower the norm.
By Proposition \ref{empangle}, the last expression is bounded by 
\[\frac{(1+\delta)\varphi^2d}{(1-\delta)^2(1-\rho_0^2)}\Vert \hat{\Pi}_V\mathbf{Y}-g_2\Vert^2_n,\]
for $g_2\in V_2$ arbitrary.
Using $\Vert \hat{\Pi}_V\mathbf{Y}-g_2\Vert_n\leq \Vert \hat{\Pi}_V (f-g_2)\Vert_n+\Vert \hat{\Pi}_V\boldsymbol{\epsilon}\Vert_n\leq \Vert f-g_2\Vert_n+\Vert\boldsymbol{\epsilon}\Vert_n$ and Markov's inequality, we conclude that
\begin{align*}
 \mathbb{P}(\mathcal{E}_\delta\cap\mathcal{A}^c)&\leq 
 \mathbb{P} \left( \frac{(1+\delta)\varphi^2d}{(1-\delta)^2(1-\rho_0^2)}(\Vert f-g_2\Vert_n+\Vert\boldsymbol{\epsilon}\Vert_n)^2>k_n^2\right) \\
 &\leq \frac{2(1+\delta)\varphi^2d(\Vert f-g_2\Vert^2+\sigma^2)}{(1-\delta)^2(1-\rho_0^2)k_n^2}.
\end{align*}
 Letting $g_2=\Pi_{V_2}f$, this completes the proof.\qed

\section*{Acknowledgements}
Finally, I sincerely would like to thank Prof. Enno Mammen for his support and guidance during the preparation of this paper.

\bibliographystyle{plain}
\bibliography{lit}
\end{document}